\DeclareMathSymbol{\mlq}{\mathord}{operators}{``}
\DeclareMathSymbol{\mrq}{\mathord}{operators}{`'}
\title{Generalized automorphic sheaves and the proportionality principle of Hirzebruch-Mumford}
\date{5.6.2019}
\author{Fritz H\"ormann\\ Mathematisches Institut, Albert-Ludwigs-Universit\"at Freiburg}
\newtheorem{SATZ}{Theorem}[subsection]
\newtheorem{LEMMA}[SATZ]{Lemma}
\newtheorem{DEF}[SATZ]{Definition}
\newtheorem{PROP}[SATZ]{Proposition}
\newtheorem{BEISPIEL}[SATZ]{Example}
\newtheorem{FOLGERUNG}[SATZ]{Corollary}
\newtheorem{KOR}[SATZ]{Corollary}
\newtheorem{BEM}[SATZ]{Remark}
\newtheoremstyle{bare}        
  {}            
  {}            
  {\normalfont}                 
  {}                            
  {\bfseries}                   
  {}                            
  {.0em}                           
  {\thmnumber{#2}#1. \thmnote{\normalfont\textsc{(#3)}} } 
\theoremstyle{bare}
\newtheorem{PAR}[SATZ]{}
\newcommand\Tstrut{\rule{0pt}{2.6ex}}         
\newcommand\Bstrut{\rule[-0.9ex]{0pt}{0pt}}   
\newcommand{\comment}[1]{}
\newcommand{\iso}{\stackrel{\sim}{\longrightarrow}}
\newcommand{\Mat}[1]{ \left(\begin{matrix} #1 \end{matrix} \right) }
\newcommand{\R}{ \mathbb{R} }
\newcommand{\C}{ \mathbb{C} }
\newcommand{\Q}{ \mathbb{Q} }
\newcommand{\Z}{ \mathbb{Z} }
\newcommand{\Gm}{ {\mathbb{G}_m} }
\newcommand{\M}{ {\mathbb{M}} }
\newcommand{\Mm}{ {\mathbb{M}_m} }
\newcommand{\N}{ \mathbb{N} }
\newcommand{\DD}{ \mathbb{D} }
\newcommand{\G}{ \mathbb{G} }
\newcommand{\A}{ \mathbb{A} }
\newcommand{\Af}{ {\mathbb{A}^{(\infty)}} }
\newcommand{\PP}{ \mathbb{P} }
\newcommand{\SSS}{ \mathbb{S} }
\newcommand{\OOO}{\text{\footnotesize$\mathcal{O}$}}
\newcommand{\OO}{ {\cal O} }
\DeclareMathOperator{\Model}{M}
\DeclareMathOperator{\Princ}{B}
\DeclareMathOperator{\Lie}{Lie}
\DeclareMathOperator{\colim}{colim}
\DeclareMathOperator{\Ind}{Ind}
\DeclareMathOperator{\id}{id}
\DeclareMathOperator{\dd}{d}
\DeclareMathOperator{\Hom}{Hom}
\DeclareMathOperator{\Fun}{Fun}
\DeclareMathOperator{\Ext}{Ext}
\DeclareMathOperator{\tr}{tr}
\DeclareMathOperator{\can}{can}
\DeclareMathOperator{\GL}{GL}
\DeclareMathOperator{\spec}{Spec}
\DeclareMathOperator{\spf}{Spf}
\DeclareMathOperator{\pr}{pr}
\DeclareMathOperator{\Res}{Res}
\DeclareMathOperator{\Ad}{Ad}
\newcommand{\cat}[1]{ {[\textnormal{ \textbf{#1} }]} }
\newcommand{\nSD}{\mathbf{X}}
\newcommand{\nSDi}{\mathbf{Y}}
\newcommand{\nSDii}{\mathbf{Z}}
\newcommand{\nP}{P}
\newcommand{\nG}{G}
\newcommand{\nX}{\mathbb{D}}
\newcommand{\nh}{h}
\newcommand{\nU}{U}
\newcommand{\nSh}{\Model}
\newcommand{\nShD}{\Model^\vee}
\newcommand{\nSPB}{\Princ}
\newcommand{\nRPCD}{\Delta}
\begin{document}

\maketitle

{\footnotesize  {\em 2010 Mathematics Subject Classification:} 14J15, 11F55, 14G35   }

{\footnotesize  {\em Keywords:}  Automorphic vector bundles, modular forms, generalized automorphic sheaves, cusp forms, Jacobi forms, Fourier-Jacobi categories, toroidal compactifications of mixed Shimura varieties, Chern classes,  Hirzebruch-Mumford proportionality}

\section*{Abstract}

We axiomatize the algebraic properties of toroidal compactifications of (mixed) Shimura varieties and their automorphic vector bundles. A notion of generalized automorphic sheaf is proposed which includes sheaves of (meromorphic) sections of automorphic vector bundles with prescribed vanishing and pole orders along strata in the compactification, and their quotients. These include, for instance, sheaves of Jacobi forms and weakly holomorphic modular forms.
Using this machinery we give a short and purely algebraic proof of the proportionality theorem of Hirzebruch and Mumford.

\tableofcontents

\section{Introduction}

For a (connected) Shimura variety $M$ associated with a reductive group $P$, Hermitian symmetric domain $\DD^+$ and neat arithmetic subgroup $\Gamma \subset P(\Q)^+$, there is a huge supply of (so called) {\bf automorphic vector bundles} on $M$ coming from its 
structure of locally symmetric variety $M = \Gamma \backslash \DD^+$. Each such vector bundle $\Xi^* \mathcal{E}$ is obtained from a $P(\C)$-equivariant vector bundle $\mathcal{E}$
on $M^\vee$, where $M^\vee$ is the compact dual of the Hermitian symmetric domain $\DD^+$. 
The recipe is as follows: There are morphisms of analytic manifolds 
\[ M = \Gamma \backslash \DD^+ \leftarrow \DD^+ \hookrightarrow M^\vee \]
where $M \leftarrow \DD^+$ is the defining $\Gamma$-torsor and the (Borel) embedding $\DD^+ \hookrightarrow M^\vee$ is $P(\R)^+$-equivariant. 
$\Xi^* \mathcal{E}$ is obtained by restricting $\mathcal{E}$ to $\DD^+$ and then taking the quotient by $\Gamma$.

In his seminal work \cite{Hir58} Hirzebruch observed that, if $M$ is compact, the Chern numbers\footnote{All polynomials in the Chern classes of highest degree considered as numbers.} of $\mathcal{E}$ and $\Xi^* \mathcal{E}$ are proportional by a universal rational factor which
may be interpreted as the volume of $M$ w.r.t.\@ a natural volume form. 
Using the theory of toroidal compactifications Mumford \cite{Mum77}  extended this result to non-compact $M$.

The proofs of Hirzebruch and Mumford rely heavily on analytic methods. Since $M$ and $M^\vee$ are both algebraic it is reasonable to expect a purely algebraic proof of the proportionality principle. The theory developed in this article provides such a proof. 
First observe that the construction of automorphic vector bundles is purely algebraic. For consider the right $P(\C)$-torsor (so called standard principal bundle) $M \leftarrow B$ obtained by extension from the $\Gamma$-torsor $M \leftarrow  \DD^+$ (considered as right $\Gamma$-torsor). 
It turns out to be algebraic as well, inducing a diagram
\begin{equation}  \label{diaspb}  \xymatrix{ M & \ar[l]_-\pi B  \ar[r]^-p & M^\vee  } \end{equation}
of algebraic varieties  where $\pi: B \rightarrow M$ is a right-torsor under $P$ and $M^\vee$ is now interpreted as a component of the moduli space of parabolics of $P$ (a flag variety).
The morphism $p$ is $P$-equivariant. The diagram may be seen as a morphism of Artin stacks
\[ \Xi: M \rightarrow \left[ M^\vee / P \right]. \]
If $M$ is non-compact, $M$ has an algebraic toroidal compactification $\overline{M}$ and the morphism $\Xi$ (or equivalently the diagram (\ref{diaspb})) extends
\[ \Xi: \overline{M} \rightarrow \left[ M^\vee / P \right]. \]
The algebraically defined {\bf automorphic vector bundles} are precisely the pull-backs of locally free sheaves on $\left[ M^\vee / P \right]$ (i.e.\@ $P$-equivariant vector bundles on $M^\vee$) along this morphism. 

In this article we axiomatize the situation, extracting a few simple axioms that ultimately imply the proportionality principle of Hirzebruch and Mumford. These axioms are well-known for Shimura varieties, and they have purely algebraic proofs themselves in cases in which $M$ naturally represents a moduli problem of Abelian varieties with extra structure. 

Along the lines, we generalize the notion of automorphic vector bundle in the non-compact case introducing {\bf generalized automorphic sheaves} that include:
\begin{itemize}
\item sheaves of sections of automorphic vector bundles with
certain vanishing conditions along the boundary (e.g.\@ bundles of cusp forms, subcanonical extensions, etc.),
\item the (push-forward of the) structure sheaf $\OO_D$ of the boundary or the structure sheaf $\OO_{\overline{Y}}$ of a closed stratum thereof,
\item line bundles of Jacobi-forms, 
\item the vector bundles $\Omega^i(\overline{M})$ and jet bundles of automorphic vector bundles,
\item line bundles of ``weakly holomorphic'' modular forms (i.e.\@ meromorphic with poles only along at the cusps). 
\end{itemize}


We now describe the axiomatization more in detail. All varieties and formal schemes are understood over a field $k$ of characteristic zero. 
We define a {\bf toroidal formal scheme} (Definition~\ref{DEFTOROIDAL}) to be a formal scheme together with an action of $\M_m^n$, where $\M_m$ is the multiplicative monoid on the affine line, which looks like the completion of a (partially) compactified $\G_m^n$-torsor on a variety along a boundary stratum. In other words, they are completions of a sum of line bundles at the zero section with the action of $\M_m^n$ remembered. 
An {\bf abstract toroidal compactification} (Definition~\ref{DEFTC}) is defined as a smooth variety $\overline{M}$ with a divisor of strict normal crossings $D$ together with the structure of toroidal formal scheme on the completions along all strata (of the stratification defined by $D$) in a compatible way w.r.t.\@ the partial ordering of the strata. 
In Section~\ref{SHIMURA1} we explain that toroidal compactifications of mixed Shimura varieties in the sense of Pink \cite{Pink} indeed give rise to such objects.

Moreover, we introduce the notion of {\bf automorphic data} (Definition~\ref{DEFAUTDATA}) on an abstract toroidal compactification. If $D= \emptyset$ this is just the datum of a ``compact dual'' $M^\vee$ and a ``standard principal bundle''
$B$ forming a diagram as (\ref{diaspb}).  

As mentioned above, this situation is well-known in the theory of Shimura varieties. In this case $B$ is called the {\bf standard principal bundle} and is (philosophically) the bundle of trivializations
of the de Rham realization of the universal motive (associated with a representation $\rho$ of the defining group $P$) together with its natural $P$-structure. 
The morphism $p$ in this case is induced by the variation of the Hodge filtration. 
If $M^\vee$ contains a $k$-rational point then the quotient stack is isomorphic to the classifying stack $\left[ \cdot / Q \right]$ of a parabolic $Q \subset P$. Therefore the datum is essentially the same as a $Q$-torsor over $M$. 

This situation generalizes to the case in which $D$ is non-trivial. In this case automorphic data consist of the following: for any stratum $Y$ a diagram
\[ \xymatrix{ C_{\overline{Y}}(\overline{M}) & \ar[l]_-\pi B_Y  \ar[r]^-p & M_Y^\vee    } \]
where $C_{\overline{Y}}$ means formal completion along $\overline{Y}$, and $\pi: B_Y \rightarrow C_{\overline{Y}}(\overline{M})$ is again a right-torsor under a --- now not necessarily reductive --- linear algebraic group $P_Y$ and $M^\vee_Y$ is a component of the moduli space of {\em quasi-}parabolics of $P_M$. The morphism $p$ is again $P_Y$-equivariant. Furthermore the action of $\M_m^{n_Y}$ lifts to $B_Y$ (the lifted action being part of the datum) such that $p$ becomes {\em invariant}. These data have to be functorial w.r.t.\@ the partial ordering of the strata (cf.\@ Definition~\ref{DEFAUTDATA} for the details). 


Such a datum is present on toroidal compactifications of Shimura varieties. This is probably less well-known, see e.g.\@ \cite{Thesis} and \cite[2.5]{Hor14}.
It exists (philosophically) because the $P_M$-structure of the de Rham realization of the universal motive becomes a $P_Y$-structure near the boundary stratum $\overline{Y}$ (in the formal sense) because of a natural weight filtration on the realization there, leading to a family of mixed Hodge structures.  

The more general situation of an (abstract) toroidal compactification equipped with automorphic data allows one to define {\bf generalized automorphic sheaves}
(Definition~\ref{DEFXI}) on $\overline{M}$. 
For this purpose the category of $P_M$-equivariant vector bundles on $M^\vee$ is not sufficient as input category. Instead, we define a larger {\em Abelian} category, the {\bf Fourier-Jacobi category} (Definition~\ref{DEFFJ}). 
The objects are specified by a
collection of functors
\[ F_Y: \Z^{n_Y} \rightarrow \cat{$\left[ M^\vee_Y / P_Y \right]$-coh} \]
for each stratum $Y$, where $n_Y=\mathrm{codim}(\overline{Y})$ and where $\cat{$\left[ M^\vee_Y / P_Y \right]$-coh}$ denotes the category of (finite dimensional) $P_Y$-equivariant vector bundles on $M^\vee_Y$. These functors are supposed to fulfill a finiteness condition, namely they have to be left Kan extensions of 
functors defined on some bounded subregion of $\Z^{n_Y}$. In particular, the sheaves $F_Y(v+ \lambda e_i)$ become (essentially) constant for sufficiently large $\lambda$ and we require that
they are isomorphic to $F_W(\pr(v))$ restricted to $M^\vee_Y$, where $W$ is a larger stratum.
It is explained in \ref{DEFXI} that such a datum $\{F_Y\}$ defines a coherent sheaf ``$\Xi^*(\{F_Y\})$'' on $\overline{M}$. The essential tool to define those sheaves is the theory of descent on formal/open coverings developed by the author in \cite{Hor16}. This theory enables to glue $\Xi^*(\{F_Y\})$ from sheaves on the various completions. The latter are, by definition, toroidal formal schemes, and the functor $F_Y$ describes the parts of $C_{\overline{Y}}(\Xi^*(\{F_Y\}))$ of varying weight under $\G_m^{n_Y}$.

\vspace{0.3cm}

{\bf Example 1.}
Let $\overline{M}$ be the compactification of a (fine) moduli space of elliptic curves with level structure. There are only two types of strata: $Y=M$ is the open stratum or $Y$ is a point (a cusp). 
In the first case $P_M = \GL_2$ and $M^\vee = \PP^1 = P_M/Q_M$ while in the second case $P_Y = \Mat{{*} & {*}\\ &1}$ and $M^\vee_Y = \A^1 = P_Y/\Gm$. 
The bundle of (weakly holomorphic) modular forms of weight $k$ (with order $\nu_Y \in \Z$ at the cusp $Y$) is given by the following input datum:
\[ F_M := \mathcal{L}^{\otimes k} \]
for the open stratum, where $\mathcal{L}$ is the standard one-dimensional representation of weight $1$ of $Q_M$, and
\[ F_Y: v \mapsto  \begin{cases}  \mathcal{L}^{\otimes k}|_{\A^1} & \text{if }v \ge \nu_Y, \\ 0 & \text{otherwise} \end{cases}
\]
for the cusps. 

\vspace{0.3cm}

{\bf Example 2.}
Let $M'$ be the universal elliptic curve over a  (fine) moduli space of elliptic curves with level structure. Let $\overline{M}$ over $M'$ be the pullback of the Poincar\'e line bundle using the
standard polarization. It is the partial compactification of a $\Gm$-torsor $M$ over $M'$. The variety $M$ is a mixed Shimura variety associated with the group $P_M = \GL_2 \ltimes W$, where $W$ is a Heisenberg group, i.e.\@ a central extension of $\G_a^2$:
\[ \xymatrix{ 0 \ar[r] & U \cong \G_a \ar[r] & W \ar[r] & V \cong \G_a^2 \ar[r] & 0.  } \]
(Here $\GL_2$ acts on $V$ via the natural 2-dimensional representation and on $U$ via the determinant.) In this case there is only one boundary stratum $Y \cong M'$ apart from $M$. Consider the following input datum:
\[ F_M := 0 \]
and
\[ F_Y: v \mapsto  \begin{cases} \mathcal{L}^{\otimes k} & \text{if }v = i, \\ 0 & \text{otherwise.} \end{cases}
\]
for $\mathcal{L}$ as before, extended (as a representation) to the present $Q_M$ in the only possible way. The associated generalized automorphic sheaf is then the bundle of Jacobi forms of weight $k$ and index $i$ (it has support on $Y \cong M'$). Here, for simplicity, we ignored the behaviour along the boundary of $M'$ which can be taken into consideration by using a full compactification of $M$ instead.  

\vspace{0.3cm}

We finally consider the notion of (logarithmic) connection on automorphic data, and certain (purely algebraic) axioms: 
\begin{itemize}
\item[(F)] flatness of the logarithmic connection (\ref{AXIOMSF}),
\item[(T)] infinitesimal Torelli (\ref{AXIOMST}),
\item[(M)] unipotent monodromy condition (\ref{AXIOMSM}),
\item[(B)] boundary vanishing condition (\ref{AXIOMSB}).
\end{itemize}
For example (F) and (T) imply that --- on the open stratum --- the formation of automorphic vector bundles commutes with the formation of sheaves of differential forms and jet bundles (Section~\ref{JET}).
If (M) holds, even the sheaves of differential forms and the jet bundles --- now on the compactification --- can be defined as generalized automorphic sheaves (Section~\ref{JET2}), as opposed to their logarithmic variants which are always usual automorphic vector bundles. 
Finally, if in addition (B) is satisfied, Hirzebruch-Mumford proportionality holds for the compactification (Section~\ref{HMPROP}). In the compact case (M) and (B) are vacuous, and everything becomes much easier.
The reason for the validity of the axioms for automorphic data on toroidal compactifications of (mixed) Shimura varieties is sketched in section \ref{SHIMURA2}.

Finally, we prove the proportionality theorem of Hirzebruch and Mumford in Section~\ref{SECTHM} in the following form: 

\vspace{0.3cm}

{\bf Theorem \ref{THEOREMHMP}.} {\em Let $\overline{M}$ be an abstract toroidal compactification of dimension $n$ equipped with automorphic data with logarithmic connection satisfying the axioms (F, T, M, B) and such that $P_M$ is reductive. 
There is a constant $c \in \Q$ such that for all homogeneous
polynomials $p$ of degree $n$ in the graded polynomial ring $\Q[c_1,c_2, \dots, c_n]$ and all $P_M$-equivariant vector bundles  $\mathcal{E}$ in $\cat{$\left[ M^\vee / P_M \right]$-coh}$
the proportionality
\[ p(c_1(\Xi^*\mathcal{E}), \dots, c_n(\Xi^*\mathcal{E}))  = c \cdot p(c_1(\mathcal{E}), \dots, c_n(\mathcal{E})) \]
holds true. }

\vspace{0.3cm}

The idea of the proof is as follows. Following Atiyah \cite{Atiyah}, the polynomials in the Chern classes of vector bundles can be computed as an element in $H^n(\overline{M}, \omega)\cong k$, resp.\@ 
$H^n(M^\vee, \omega) \cong k$ by a  construction (purely in terms of homological algebra) starting from the extension
\begin{equation}\label{eqintrojet} 
\xymatrix{ 0 \ar[r] &  \Omega^1 \otimes \mathcal{E} \ar[r] & J^1 \mathcal{E} \ar[r] & \mathcal{E} \ar[r] & 0  }
\end{equation}
for $\mathcal{E}$ and for a similar extension for $\Xi^* \mathcal{E}$.
This construction works in every Abelian tensor category. It suffices therefore to find an Abelian tensor category $\mathcal{A}$ which maps via an exact tensor functor to the categories of coherent sheaves
\[ \cat{$\overline{M}$-coh} \text{ and } \cat{$M^\vee$-coh} \] 
respectively, such that an extension like (\ref{eqintrojet}) exists in $\mathcal{A}$ and maps to the extensions $J^1 \mathcal{E}$, and $J^1( \Xi^* \mathcal{E})$, respectively. 
Furthermore, this Abelian tensor category has to satisfy the property that $\Ext^n_{\mathcal{A}}(\OO, \omega')$ is one-dimensional where $\omega'$ is the pre-image of both $\omega_{\overline{M}}$ and $\omega_{M^\vee}$.

In the compact case the category $\cat{$\left[M^\vee / P_M \right]$-coh}$ of $P_M$-equivariant vector bundles on $M^\vee$ can be taken as $\mathcal{A}$.
This does not work in general because $\Xi^* \omega_{M^\vee} = \omega_{\overline{M}}(\log)$ and mostly $H^n(\overline{M}, \omega(\log))=0$. 

In the non-compact case, the Fourier-Jacobi categories can be taken as $\mathcal{A}$. 
Here the boundary vanishing condition comes into play which, by an easy homological algebra argument, implies that $\Ext^n_{\mathcal{A}}(\OO, \omega')$ is indeed one-dimensional. 
(Strictly speaking we only construct the tensor product on a subcategory of ``torsion-free'' objects in the Fourier-Jacobi-categories and show that $\Xi^*$ respects it. For the reasoning above this is however sufficient.)

This article would never have been realized without interesting discussions with Emanuel Scheidegger, whom I would like to thank very much.
Special thanks to Wolfgang Soergel to whom I am indebted for his aid.

\section*{Notation}

We write $[n]$ for the unordered set $\{1, \dots, n\}$ and $\Delta_n$ for the poset $\{1 \le 2 \dots \le n\}$ also
 regarded as a category.  For a scheme, formal scheme, or stack $X$ we write 
 $\cat{$X$-coh}$ (or sometimes $\cat{$\OO_X$-coh}$) for the category of coherent sheaves on $X$ and $\cat{$X$-qcoh}$ for the category of quasi-coherent sheaves.  

\section{Toroidal compactifications}\label{SECTIONTC}

\subsection{Toroidal formal schemes}

\begin{PAR}
Let $k$ be a field of characteristic 0, fixed for the whole article. 
Let $\Mm$ be $\A^1$ with its unital multiplicative monoid structure over $k$ and, as usual, let $\Gm \hookrightarrow \Mm$ be the open subscheme of the multiplicative group. Denote by $\varepsilon$ the unit of $\Mm$ or $\Gm$ and by $\mu$ the multiplication. 

Let $n$ be a positive integer and let $X$ be a formal scheme over $k$ with an action of $\M_m^n$, i.e. with a given morphism
\[\xymatrix{ \M_m^n \times X \ar[r]^-\rho & X } \]
such that the diagram
\[ \xymatrix{
\M_m^n \times \M_m^n \times X \ar[rr]^-{\id \times \rho} \ar[d]^{\mu \times \id} && \M_m^n \times X \ar[d]^\rho \\
\M_m^n \times X \ar[rr]^-\rho && X
} \]
is commutative and such that the composition
\[\xymatrix{X \ar[r]^-{\varepsilon \times \id} & \M_m^n \times X \ar[r]^-\rho & X } \]
is the identity. By restriction along $\G_m^n \hookrightarrow \M_m^n$ there is, in particular, also a $\G_m^n$-action on $X$. 
\end{PAR}

We have the following lemma whose proof we leave to the reader. 

\begin{LEMMA}
Let $X=\spf R$ be an affine formal scheme over $k$.
It is equivalent to give an action of $\M_m^n$ on $X$ or a (topological) {\bf $\Z^n_{\ge 0}$-grading} on $R$, i.e.\@ 
collection of $k$-sub-vectorspaces $R_v \subseteq R$ for each $v \in \Z^n_{\ge 0}$ such that   
\begin{enumerate}
\item 
For all $v, w \in \Z^n_{\ge 0}$, we have 
\[ R_v \cdot R_w \subseteq R_{v+w}.  \]
\item Each $x \in R$ has a {\em unique} expression as a converging sum
\[ x = \sum_{v \in \Z^n_{\ge 0}} x_v \]
with $x_v \in R_v$. 
\end{enumerate}
\end{LEMMA}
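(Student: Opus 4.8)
The plan is to treat this as the monoid/formal-scheme incarnation of the classical equivalence between comodule-algebra structures over the bialgebra $\mathcal{O}(\Mm^n)$ and gradings, the grouplike elements of $\mathcal{O}(\Mm^n)$ being exactly the monomials $t^\alpha$ indexed by $\alpha \in \Z^n_{\ge 0}$; everything is organized around the coaction dual to $\rho$ and the projections it produces. Since $\Mm^n = \spec k[t_1,\ldots,t_n]$ with $\mu^\ast(t_i) = t_i \otimes t_i$ and $\varepsilon^\ast(t_i) = 1$, and since for the adic ring $R$ one has
\[ \mathcal{O}(\Mm^n \times X) \;=\; k[t_1,\ldots,t_n]\,\widehat{\otimes}_k R \;=\; \Bigl\{ \textstyle\sum_{\alpha \in \Z^n_{\ge 0}} r_\alpha\, t^\alpha \;:\; r_\alpha \in R,\ r_\alpha \to 0 \Bigr\}, \]
giving a morphism $\rho$ is the same as giving a continuous $k$-algebra homomorphism $\rho^\ast\colon R \to k[t_1,\ldots,t_n]\,\widehat{\otimes}_k R$, and the commutative diagram and the unit condition above translate into the coassociativity identity $(\mu^\ast \otimes \id)\circ \rho^\ast = (\id \otimes \rho^\ast)\circ \rho^\ast$ and the counit identity $(\varepsilon^\ast \otimes \id)\circ \rho^\ast = \id$.

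To pass from an action to a grading I would expand $\rho^\ast(r) = \sum_\alpha \pi_\alpha(r)\, t^\alpha$, defining continuous $k$-linear maps $\pi_\alpha\colon R \to R$ with $\pi_\alpha(r) \to 0$. The counit identity gives the convergent resolution of unity $\sum_\alpha \pi_\alpha = \id$. Writing $\mu^\ast(t_i) = s_i t_i$ and comparing coefficients of $s^\alpha t^\beta$ on both sides of the coassociativity identity yields $\pi_\beta \circ \pi_\alpha = \delta_{\alpha\beta}\, \pi_\alpha$, so the $\pi_\alpha$ are pairwise orthogonal idempotents. Setting $R_\alpha := \pi_\alpha(R)$, the induced decomposition $R = \widehat{\bigoplus}_\alpha R_\alpha$ together with the uniqueness forced by orthogonality is precisely condition (2), and comparing coefficients in $\rho^\ast(rr') = \rho^\ast(r)\,\rho^\ast(r')$ gives $\pi_\gamma(rr') = \sum_{\alpha+\beta=\gamma} \pi_\alpha(r)\,\pi_\beta(r')$, whence $R_\alpha R_\beta \subseteq R_{\alpha+\beta}$ and $1 \in R_0$, which is condition (1).

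Conversely, given a grading I would define $\rho^\ast(x) := \sum_\alpha x_\alpha\, t^\alpha$ from the unique expansion $x = \sum_\alpha x_\alpha$ of condition (2). Conditions (1) and (2) make $\rho^\ast$ a continuous $k$-algebra homomorphism into the completed tensor product, and the projections onto the $R_\alpha$ are orthogonal idempotents summing to $\id$, so reversing the computation of the previous paragraph recovers the counit and coassociativity identities and hence an action of $\Mm^n$. The two constructions are visibly mutually inverse.

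I expect the genuine algebra here—grouplikes of $\mathcal{O}(\Mm^n)$ indexed by $\Z^n_{\ge 0}$, comodule-algebras matching graded algebras—to be routine; the part requiring care is the topology. The main obstacle is the formal/adic bookkeeping: identifying $\mathcal{O}(\Mm^n \times X)$ with the convergent power series displayed above, verifying that the coefficient maps $\pi_\alpha$ are continuous, and matching the convergence of $\sum_v x_v$ in condition (2) with the vanishing $x_v \to 0$ that keeps $\rho^\ast$ inside the completed tensor product rather than a larger ring. This is exactly the point at which the word \emph{topological} in the statement of the grading is needed.
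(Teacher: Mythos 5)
Your argument is correct and complete: the paper states this Lemma without proof, and what you give is precisely the standard coaction--grading dictionary (grouplikes $t^\alpha$ of $\mathcal{O}(\Mm^n)$, orthogonal idempotent coefficient maps $\pi_\alpha$ from coassociativity, resolution of unity from the counit, multiplicativity of the grading from $\rho^\ast$ being a ring map) that the author is implicitly relying on, together with the correct identification of $\mathcal{O}(\Mm^n\times X)$ with the completed tensor product and the matching of ``converging sum'' with $x_v\to 0$. Nothing to add.
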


We denote by $e_1, \dots, e_n$ the standard basis of $\Z^n$. 

\begin{DEF}\label{DEFTOROIDAL}
A formal $k$-scheme $X$ with an action of $\M_m^n$ is called {\bf toroidal} if there is an affine covering by $\spf R$'s such that
the action restricts to $\M_n^m \times \spf R \rightarrow \spf R$ and such that
\begin{enumerate}
\item All $R_v$ have the discrete topology.
\item The induced map
\[ R_0[R_{e_1}, \dots, R_{e_n}] \rightarrow  R \]
has dense image and induces an isomorphism between the completion of $R_0[R_{e_1}, \dots, R_{e_n}]$ at the ideal $(R_{e_1}, \dots, R_{e_n})$ and $R$. 
\item The $R_{e_i}$ (and hence by 2. all $R_{v}$) are locally free $R_0$-modules of rank 1.
\end{enumerate}
\end{DEF}

It follows that, up to restricting  to a finer open cover, we have 
\[ R \cong R_0 \llbracket x_1, \dots, x_n \rrbracket   \]
with its natural topological $\Z^n_{\ge 0}$-grading. The $x_i$ however are only  determined up to $R_0^\times$.

\begin{PAR}
On a toroidal formal scheme $X$ we also have a ring-sheaf $\OO_{X_0}$ which locally gives the $R_0$'s and the $\OO_{X,v}$ which are coherent $\OO_{X_0}$-submodules of $\OO_X$. The topological space $X$ together with $\OO_{X,0}$ is a scheme and it is isomorphic to the categorical quotient (in the category of formal schemes) of $X$ w.r.t.\@ the action of $\M_m^n$. It is denoted by $X_0$. Furthermore there is an obvious section (a closed embedding) $X_0 \hookrightarrow X$.
\end{PAR}

\begin{BEISPIEL}
The standard example starts from a $\G_m^n$-bundle on a variety which gets partially compactified by glueing in the partial compactification $\G_m^n \hookrightarrow \M_m^n$ and then 
completed at the section given by the origin of $\M_m^n$. 
\end{BEISPIEL}

\subsection{Modules and differentials}

In the following we consider the integers $\Z$ as a category via the natural inclusion of posets into categories. In other words, there is a morphism (and a unique one)
$n \rightarrow n'$ if and only if $n\le n'$. 

\begin{PROP}\label{PROPTCOH}
Let $X$ with an action of $\M_m^n$ be a noetherian toroidal formal scheme. 
It is equivalent to give
\begin{enumerate}
\item a coherent sheaf of $\OO_X$-modules $M$ with an extension of the $\G_m^n$-action (not necessarily the $\M_m^n$-action);
\item a collection of coherent sheaves of $\OO_{X_0}$-modules $M_w$ for $w \in \Z^n$ together with an associative system of multiplication morphisms for $v \in \Z^n_{\ge 0}$:
\[ \OO_{X,v} \otimes_{\OO_{X_0}} M_w \rightarrow M_{v+w} \]
giving for $v=0$ just the module-structure, and such that there are $N', N \in \Z$ with the property that for all $w$ such that for all $i$, if $w_i \ge N$ and $v = e_i$ the morphism is an isomorphism and for all $w$ such that some $w_i < N'$  the module $M_w$ is zero; 

\item a functor with values in coherent sheaves of $\OO_{X_0}$-modules
\begin{eqnarray*} 
 M: \Z^n &\rightarrow& \cat{$\OO_{X_0}$-coh}  \\
 v &\mapsto& M(v)
 \end{eqnarray*}
 such that there are $N, N' \in \Z$ with the property that for all $i$ and for all $v$ with $v_i \ge N$ the morphism $M(v \rightarrow v+ e_i)$ is an isomorphism and for all $v$ such that $v_i<N'$ for some $i$ the module $M(v)$ is zero. 
In other words the functor is isomorphic to the left Kan extension of a functor 
$\Delta_{N-N'}^{n} \rightarrow \cat{$\OO_{X_0}$-coh}$ where $\Delta_{N-N'}$ is considered as an interval $[N', N] \subset \Z$.
\end{enumerate}
\end{PROP}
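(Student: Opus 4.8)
The plan is to argue locally on an affine cover by $\spf R$ with $R \cong R_0\llbracket x_1, \dots, x_n \rrbracket$ (available by the remark following Definition~\ref{DEFFORMALLYTOROIDAL}), to establish the three equivalences there, and then to glue using that every construction is canonical. Write $\mathcal{L}_i := \OO_{X,e_i}$, an invertible $\OO_{X_0}$-module, so that $\OO_{X,v} \cong \bigotimes_i \mathcal{L}_i^{\otimes v_i}$ for $v \in \Z^n_{\ge 0}$. For the equivalence of (1) and (2), I would use that on a noetherian formal scheme a coherent $\OO_X$-module is locally a finitely generated $R$-module, and that a $\Gm^n$-equivariant structure compatible with the $\M_m^n$-action amounts to a $\Z^n$-grading $\widetilde{M} = \widehat{\bigoplus}_w M_w$ with $R_v M_w \subseteq M_{v+w}$ for $v \in \Z^n_{\ge 0}$. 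The graded pieces $M_w$ are then coherent $\OO_{X_0}$-modules and the $R_v$-multiplication gives exactly the maps $\OO_{X,v} \otimes_{\OO_{X_0}} M_w \to M_{v+w}$. The vanishing condition is immediate, since a finitely generated module is generated in degrees bounded below while $R$ lives in degrees $\ge 0$.

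The main obstacle is the stabilization condition: that some $N$ makes $x_i \colon M_w \to M_{w+e_i}$ an isomorphism whenever $w_i \ge N$. Surjectivity for large $w_i$ holds because any monomial $x^v g_j$ in the finitely many homogeneous generators $g_j$ that reaches degree $w$ must have $v_i \ge 1$, hence be divisible by $x_i$. For injectivity I would examine the kernel $K := \ker(x_i \colon \widetilde{M} \to \widetilde{M})$, which is a finitely generated graded submodule (this is where the noetherian hypothesis is crucial); as $x_i K = 0$, it is a finitely generated graded module over $R/(x_i) \cong R_0 \llbracket x_1, \dots, \widehat{x_i}, \dots, x_n \rrbracket$, whose generators have $i$-th degree $0$, so $K$ occupies only finitely many $i$-degrees and $K_w = 0$ for $w_i$ large. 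Each of these bounds depends only on $w_i$, uniformly in the other coordinates, so letting $N$ dominate all of them yields simultaneous stabilization. Conversely, given the data of (2), I reconstruct $\widetilde{M} := \widehat{\bigoplus}_w M_w$ (the $(x_1,\dots,x_n)$-adic completion of $\bigoplus_w M_w$); the two conditions ensure that the finitely many $M_w$ with $N' \le w_i \le N$ generate it over $R$ and that it is $x$-adically complete, hence coherent, and the two assignments are visibly mutually inverse.

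The equivalence of (2) and (3) is a twisting argument. Setting $M(w) := M_w \otimes_{\OO_{X_0}} \bigotimes_i \mathcal{L}_i^{\otimes(-w_i)}$, which is defined for all $w \in \Z^n$ because the $\mathcal{L}_i$ are invertible, the multiplication $\mathcal{L}_i \otimes M_w \to M_{w+e_i}$ becomes, after tensoring with $\bigotimes_j \mathcal{L}_j^{\otimes(-w_j - \delta_{ij})}$, an untwisted morphism $M(w) \to M(w+e_i)$. Associativity of the multiplication translates into commutativity of the squares built from the $e_i$- and $e_j$-steps, so the $M(w)$ organize into a functor $\Z^n \to \cat{$\OO_{X_0}$-Coh}$; the finiteness conditions of (2) become verbatim the statements that $M(w \to w+e_i)$ is an isomorphism for $w_i \ge N$ and $M(w)=0$ for $w_i < N'$, i.e. that the functor is the left Kan extension of its restriction to the box $\Delta_{N-N'}^n$. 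The inverse twist $M_w := M(w) \otimes \bigotimes_i \mathcal{L}_i^{\otimes w_i}$ recovers (2).

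It remains to globalize. Every ingredient is canonical: the grading is read off from the given $\Gm^n$-action, the sheaves $\OO_{X_0}$, $\OO_{X,v}$ and their multiplication are intrinsic to the formally toroidal structure, and the reconstruction $\widehat{\bigoplus}_w M_w$ is functorial. Consequently the local equivalences agree on overlaps and glue to the claimed equivalence of global data, in which the $M_w$ (respectively the functor $M$) are coherent sheaves on the scheme $X_0$ and the transition data are global $\OO_{X_0}$-linear morphisms. I expect the injectivity half of the stabilization step to be the only genuinely delicate point; everything else is bookkeeping with gradings and the invertible sheaves $\mathcal{L}_i$.
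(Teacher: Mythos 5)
Your proposal follows essentially the same route as the paper's (sketched) proof: the equivalence of (1) and (2) via the weight decomposition under $\G_m^n$ with reconstruction as the completed sum/product of the graded pieces, and the equivalence of (2) and (3) by twisting with $\OO_{X,-v}=\bigotimes_i \OO_{X,e_i}^{\otimes(-v_i)}$. Your argument is correct; you merely supply more detail than the paper does on the stabilization of $x_i\colon M_w\to M_{w+e_i}$ for large $w_i$ (surjectivity from the generators, injectivity from noetherianness of the kernel), which the paper's proof sketch leaves implicit.
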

\begin{proof}[Proof (sketch).]
$1 \leftrightarrow 2$: Given a module $M$ the associated $M_v$ is just the $\OO_{X_0}$-submodule of elements transforming with weight $v$ under $\G_m^n$.
Conversely, the module $M$ is given as the {\em product} of the modules $M_v$.

$2 \leftrightarrow 3$: A collection $M_v$ is associated with the functor $v \mapsto M(v) := M_v \otimes \OO_{X,-v}$.
Here for arbitrary $v \in \Z^n$ we set 
\[  \OO_{X,v} :=  \bigotimes_i \OO_{X,e_i}^{\otimes v_i}. \] 

A morphism $v \rightarrow w$ in $\Z^n$ is mapped to the morphism 
\[ M_v \otimes_{\OO_{X,0}} \OO_{X,-v} \rightarrow  M_w \otimes_{\OO_{X,0}} \OO_{X,-w} \]
induced by
\[ \OO_{X,w-v} \otimes_{\OO_{X,0}} M_v \rightarrow M_{w}. \]
The functoriality of the functor $M$ is equivalent to the associativity of the multiplication on the module $M$. 
\end{proof}

\begin{DEF}\label{DEFTCOH}
Let $X$ with an action of $\M_m^n$ be a noetherian toroidal formal scheme. Coherent $\OO_X$-modules with compatible $\G_m^n$-action as in Proposition~\ref{PROPTCOH}
form an Abelian category which we denote by $\cat{$\OO_X$-tcoh}$. 
\end{DEF}

\begin{LEMMA}
Under the correspondence above, we have that the
$M(v)$ are torsion-free $\OO_{X,0}$-modules and the $M(v \rightarrow w)$ are monomorphisms for all $v \le w$, if and only if $M$ is torsion-free. 
\end{LEMMA}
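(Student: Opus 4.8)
The plan is to work locally and reduce to the affine model. Since being torsion-free, being a monomorphism, and the identifications of Proposition~\ref{PROPTCOH} are all local on $X_0$, I may assume $X=\spf R$ with $R\cong R_0\llbracket x_1,\dots,x_n\rrbracket$ as in Definition~\ref{DEFFORMALLYTOROIDAL}, where $R_0=\OO_{X,0}$ and $x_i$ is a local generator of the invertible module $\OO_{X,e_i}$. Under the dictionary $2\leftrightarrow 3$ of Proposition~\ref{PROPTCOH} the map $M(v\to v+e_i)$ is, after trivializing $\OO_{X,-v}$, just multiplication $x_i\colon M_v\to M_{v+e_i}$; and since $\OO_{X,-v}$ is invertible, $M(v)$ is torsion-free over $R_0$ iff $M_v$ is. Thus the claim reduces to: $M$ is torsion-free over $R$ iff each $M_v$ is torsion-free over $R_0$ and each $x_i\colon M_v\to M_{v+e_i}$ is injective. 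I will use three elementary facts about the model: $x_i$ is a non-zero-divisor of $R$; any non-zero-divisor of $R_0$ remains one in $R$, since it acts injectively coefficientwise; and $R$ is flat over $R_0$, being the $(x)$-adic completion of the free $R_0$-algebra $R_0[x_1,\dots,x_n]$.

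For the direction ``$\Rightarrow$'' I would argue as follows. If $M$ is torsion-free, then multiplication by the non-zero-divisor $x_i$ is injective on $M$; as it carries the weight-$v$ part $M_v\subseteq M$ into $M_{v+e_i}$, the induced map $x_i\colon M_v\to M_{v+e_i}$ is injective, and composing these shows $M(v\to w)$ is a monomorphism for all $v\le w$. Likewise any non-zero-divisor of $R_0$ is a non-zero-divisor of $R$, hence injective on $M$ and a fortiori on the submodule $M_v$; so each $M_v$, and therefore each $M(v)$, is torsion-free over $R_0$.

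For the converse I assume the two conditions and produce an embedding of $M$ into a manifestly torsion-free $R$-module. The stabilization hypothesis makes the $\OO_{X,0}$-modules $M_w$ with $w\ge N\cdot(1,\dots,1)$ canonically isomorphic via the $x_i$; call this common value $F$, a finitely generated torsion-free $R_0$-module. Because every $x_i\colon M_v\to M_{v+e_i}$ is injective, the composite structure maps $\iota_v\colon M_v\hookrightarrow F$ are injective and intertwine multiplication by $x_i$ with the identity of $F$. Sending $m=\sum_v m_v$ to $\sum_v \iota_v(m_v)\,x^{v-N'\cdot(1,\dots,1)}$ then defines an injective, $R$-linear map $M\hookrightarrow F\otimes_{R_0}R=F\llbracket x_1,\dots,x_n\rrbracket$. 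It therefore suffices to show that $F\otimes_{R_0}R$ is torsion-free over $R$ whenever $F$ is torsion-free over $R_0$; as a submodule, $M$ is then torsion-free as well.

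The remaining point, which I expect to be the main obstacle since it is the only step that is not bookkeeping, is this torsion-freeness of the flat base change. Here I would argue with associated primes. By the standard formula for a flat extension, $\operatorname{Ass}_R(F\otimes_{R_0}R)=\bigcup_{\mathfrak q\in\operatorname{Ass}_{R_0}(F)}\operatorname{Ass}_R(R/\mathfrak q R)$, and since $R/\mathfrak q R\cong(R_0/\mathfrak q)\llbracket x_1,\dots,x_n\rrbracket$ is a domain (power series over the domain $R_0/\mathfrak q$), each such set is the singleton $\{\mathfrak q R\}$; the same computation with $F=R_0$ exhibits the zero-divisors of $R$ as $\bigcup_{\mathfrak p\in\operatorname{Ass}_{R_0}(R_0)}\mathfrak p R$. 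Torsion-freeness of $F$ says its zero-divisors $\bigcup_{\mathfrak q}\mathfrak q$ lie in those of $R_0$, so by prime avoidance each $\mathfrak q$ sits inside some $\mathfrak p\in\operatorname{Ass}_{R_0}(R_0)$, whence $\mathfrak q R\subseteq\mathfrak p R$ and the zero-divisors of $F\otimes_{R_0}R$ are contained in those of $R$. The delicate bookkeeping lives in the non-reduced case, where one must route through prime avoidance rather than a naive inclusion $\operatorname{Ass}(F)\subseteq\operatorname{Ass}(R_0)$; when $R_0$ is a domain (as in the Shimura application, where $X_0$ is smooth) everything collapses to the observation that $F\hookrightarrow F\otimes_{R_0}\operatorname{Frac}(R_0)$ base changes to an embedding into a finite free module over the domain $\operatorname{Frac}(R_0)\llbracket x_1,\dots,x_n\rrbracket$.
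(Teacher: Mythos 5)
The paper states this lemma without proof, so there is nothing to compare against; your proposal supplies a correct and complete argument. The reduction to the local model $R\cong R_0\llbracket x_1,\dots,x_n\rrbracket$, the identification of $M(v\to v+e_i)$ with multiplication by $x_i$ on the weight spaces $M_v$, and the forward direction (using that $x_i$ and every non-zero-divisor of $R_0$ remain non-zero-divisors in $R$) are all handled correctly. The only step carrying real content is the converse, and your two-stage treatment is sound: the embedding $M\hookrightarrow F\otimes_{R_0}R\cong F\llbracket x_1,\dots,x_n\rrbracket$ is well defined and injective precisely because of the two hypotheses (injectivity of the transition maps gives the $\iota_v$, stabilization gives $F$), and the torsion-freeness of the flat base change follows as you say from $\operatorname{Ass}_R(F\otimes_{R_0}R)=\bigcup_{\mathfrak q\in\operatorname{Ass}_{R_0}(F)}\{\mathfrak q R\}$ together with prime avoidance; the hypotheses of that formula (both rings noetherian, $F$ finitely generated, $R$ flat over $R_0$) are all available here since $X$ is assumed noetherian. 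Your closing remark is also apt: in the intended applications the strata are smooth, so $R_0$ is locally a domain and the last step degenerates to the elementary observation about $F\hookrightarrow F\otimes_{R_0}\operatorname{Frac}(R_0)$.
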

\begin{proof}Left to the reader. \end{proof}

\begin{BEM}
We define the full subcategory $\Fun(\Z^n, \cat{$\OO_{X_0}$-coh})^{f.g.}$ of $\Fun(\Z^n, \cat{$\OO_{X_0}$-coh})$ as those functors $M$ which 
have the property stated in Proposition~\ref{PROPTCOH}, 3.
Hence we have an equivalence
\[ \cat{$\OO_X$-tcoh} \cong \Fun(\Z^n, \cat{$\OO_{X_0}$-coh})^{f.g.}. \]
\end{BEM}

\begin{PAR}\label{CONSTTERM}
Let $M$ be a coherent sheaf on $X$ with a compatible action of 
$\G_m^{n}$. We have its  associated functor $M: \Z^{n} \rightarrow \cat{$\OO_{X_0}$-coh}$. As said, there is an $N$ such that $M(\sum \alpha_i e_i)$ is (essentially) constant in $\alpha_i$ if $\alpha_i > N$. We denote this sheaf by $\lim_{\alpha \rightarrow \infty} M(v + \alpha e_i)$. Note that also expressions like $\lim_{\alpha_{1} \rightarrow \infty, \dots, \alpha_{j} \rightarrow \infty} M(v + \alpha_1 e_{i_1} + \cdots + \alpha_j e_{i_j})$ do make sense (up to isomorphism). Given an injection $\beta: [j] \hookrightarrow [n]$ we will regard this construction w.r.t to the {\em missing} indices in the image of $\beta$ as a functor 
\[ \lim_{\beta}: \Fun(\Z^{n}, \cat{$\OO_{X_0}$-coh})^{f.g.} \rightarrow \Fun(\Z^{j}, \cat{$\OO_{X_0}$-coh})^{f.g.} . \]
We just write ``$\lim$'' for this construction w.r.t.\@ all indices. 
\end{PAR}

\begin{PAR}\label{TENSOR1}
For coherent, {\em torsion-free} sheaves $M$ and $N$ we can describe the tensor product $M \otimes N$ with its natural $\M_m^n$ action by the functor
\[  (M \otimes N)(v) = \sum_{v_1+v_2=v} M(v_1)\otimes N(v_2)  \]
where the sum is formed in  $(\lim M) \otimes (\lim  N)$.
\end{PAR}

\begin{PAR} \label{CANEXT}
For any injection $\beta: [j] \hookrightarrow [n]$ define a sheaf $\OO_X[\beta^{-1}]$ as the sheafification of the pre-sheaf defined (for small enough $U$) by
\[ U \mapsto \OO_X(U)[x_{k_1}^{-1}, \dots, x_{k_{n-j}}^{-1}] \] 
where $\{k_1, \dots, k_{n-j}\}$ is the complement of $\mathrm{im}(\beta)$ and the $x_i$ are generators of $\OO_{X,e_i}$.
To a coherent (in the sense of modules on ringed spaces) $\OO_X[\beta^{-1}]$-module with $\G_m^n$-action  we may still associate (in the same way as in Proposition \ref{PROPTCOH}) a functor in $\Fun(\Z^{n}, \cat{$\OO_{X_0}$-coh})$. 
This yields a {\em fully-faithful} functor
\[ \cat{$\OO_X[\beta^{-1}]$-tcoh} \rightarrow \Fun(\Z^{n}, \cat{$\OO_{X_0}$-coh}) \]
which has the property that the functors in the image are {\em constant in the direction of the $e_{k_i}$}. 

The corresponding localization for modules is given by the $\lim$-construction of \ref{CONSTTERM}. More precisely, the diagram
\[ \xymatrix{
\cat{$\OO_{X}$-tcoh} \ar[rr] \ar[d]_{\cong} & &\cat{$\OO_{X}[\beta^{-1}]$-tcoh} \ar@{^{(}->}[d] \\
\Fun(\Z^{n}, \cat{$\OO_{X_0}$-coh})^{f.g.} \ar[r]^-{\lim_{\beta}} & \Fun(\Z^{j}, \cat{$\OO_{X_0}$-coh})^{f.g.} \ar[r]^-{p_{\beta}^*} & \Fun(\Z^{n}, \cat{$\OO_{X_0}$-coh})
} \]
is commutative. Here $p_{\beta}^*$ is the pullback induced by the projection $p_{\beta}: \Z^n \rightarrow \Z^{j}$ induced by $\beta$. The sheaf $\OO_X[\beta^{-1}]$ can be completed afterwards w.r.t.\@ any of the ideals generated by $\OO_{X,e_i}$ for $i \in \mathrm{im}(\beta)$. (For $i \not\in  \mathrm{im}(\beta)$ the completion would be zero.)
This process of inverting elements and completion might be repeated. Any sheaf $R$ of $\OO_X$-algebras so obtained (which still carries an action of $\G_m^n$) still yields a {\em fully-faithful} functor
\[ \cat{$R$-tcoh} \rightarrow \Fun(\Z^{n}, \cat{$\OO_{X_0}$-coh}) \]
whose image consists of functors that are constant in the direction of the $e_{\beta(i)}$ for those $i$ such that (locally) a generator $x_i$ has been inverted. 
An inverse functor on the essential image might be quite complicated to describe. Its values are given as a subset of the infinite  product that was considered in Proposition~\ref{PROPTCOH} but
the sequences might be e.g.\@  bounded below in some direction, point-wise w.r.t.\@ another direction. Since we will not need it we will not elaborate on this. 

A $\G_m^n$-equivariant coherent $\OO_X[\emptyset^{-1}]$-module $\widetilde{M}$ (where $\emptyset: [0] \rightarrow [n]$ is the inclusion of the empty set) is equivalent to just an $\OO_{X_0}$-module via $\widetilde{M} \mapsto \widetilde{M}(0)$. 
Each $\OO_{X_0}$-module $M_0$ in turn
 has a {\bf canonical extension} to an $\OO_X$-Module with $\M_m^n$-action, given by means of the functor 
\[ M_0(v) = \begin{cases} M_0 & \text{if $v\in \Z^n_{\ge 0}$,} \\ 0 & \text{otherwise,}\end{cases} \]
or equivalently by $M := M_0 \otimes_{\OO_{X,0}} \OO_X$ with its natural $\M_m^n$-action. We denote the full subcategory of $\cat{$\OO_X$-tcoh}$ 
consisting of canonical extensions by $\cat{$\OO_X$-tcoh-can}$.
%
\end{PAR}

\begin{PAR}\label{ATIYAH}
There is the following exact sequence (equivariant w.r.t.\@ the action of $\M_m^{n}$) of coherent sheaves on $X$: 
\[ \xymatrix{ 0 \ar[r] & \widehat{\Omega}_{X_0} \otimes_{\OO_{X_0}} \OO_X \ar[r] & \widehat{\Omega}_X  \ar[r] & \sum_{i} \OO_{X,e_i} \otimes_{\OO_{X_0}} \OO_X  \ar[r] & 0 } \]
where $\sum_{i} \OO_{X,e_i} \otimes_{\OO_{X_0}} \OO_X $ is isomorphic to the  bundle $\widehat{\Omega}_{X/X_0}$. The bundle   
$\widehat{\Omega}_X$ is not a canonical extension. There is the larger bundle $\widehat{\Omega}_{X}(\log)$ which is locally generated by $\widehat{\Omega}_{X}$ and by the rational differentials $\frac{\dd x_i}{x_i}$.
The latter are invariant under the action of $\M_m^{n}$. We proceed to describe the associated functors of the $\M_m^{n}$-equivariant vector bundles $\widehat{\Omega}_{X}$ and $\widehat{\Omega}_{X}(\log)$. 

Consider the Atiyah extensions on $X_0$ associated with the line bundles $\OO_{X,e_i}$ 
\[ \xymatrix{ 0 \ar[r] & \widehat{\Omega}_{X_0} \ar[r] & E_i \ar[r]^-{p_i} & \OO_{X_0} \ar[r] & 0  } \]
and their amalgamed sum
\begin{equation} \label{explicitlog}
\xymatrix{ 0 \ar[r] & \widehat{\Omega}_{X_0} \ar[r] & E \ar[r]^-{\bigoplus p_i} & \bigoplus_i \OO_{X_0} \ar[r] & 0  } 
\end{equation}

Then $\widehat{\Omega}_X(\log)$ is just the canonical extension of $E$, i.e.\@ it is given by the functor
\[ \widehat{\Omega}_X(\log)(v) = \begin{cases} E & \text{if }v \ge 0, \\ 0 & \text{otherwise}. \end{cases} \]
\end{PAR}

In local coordinates one checks the following:

\begin{PROP}
The functor  associated with $\widehat{\Omega}_X$ is given by
\[ \widehat{\Omega}_X(v) = \begin{cases} \{ e \in E \ |\ \forall i:\ v_i =0 \Rightarrow p_i(e) = 0  \} & \text{if }v = \sum v_i e_i \ge 0, \\ 0 & \text{otherwise}, \end{cases} \]
as a subfunctor of $\widehat{\Omega}_X(\log)$.
\end{PROP}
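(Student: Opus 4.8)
The plan is to verify the formula by a local computation in coordinates, since both $\Omega_X$ and $\Omega_X(\log)$ are defined locally and the claimed subspace of $E$ will turn out to be intrinsic. So I would work on an affine piece where $R \cong R_0\llbracket x_1, \dots, x_n\rrbracket$ with $x_i$ a local generator of $\OO_{X,e_i}$. There $\Omega_X(\log)$ is free over $R$ on the weight-$0$ generators coming from a local basis of $\Omega_{X_0}$ together with the invariant forms $\frac{\dd x_i}{x_i}$; that is, locally $\Omega_X(\log) = (\Omega_{X_0}\otimes_{\OO_{X_0}} \OO_X) \oplus \bigoplus_i \OO_X \tfrac{\dd x_i}{x_i}$. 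Extracting the weight-$v$ part for $v = \sum \alpha_i e_i \ge 0$ and twisting by $\OO_{X,-v}$ (i.e.\@ dividing by the monomial $x^v$) recovers $\Omega_X(\log)(v) = E$ with $E = \Omega_{X_0} \oplus \bigoplus_i \OO_{X_0}\tfrac{\dd x_i}{x_i}$, the amalgamed Atiyah extension of \ref{ATIYAH}, the projection $p_i$ being the coefficient of $\frac{\dd x_i}{x_i}$; for $v \not\ge 0$ one gets $0$.

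Next I would describe $\Omega_X$ inside $\Omega_X(\log)$ as the $\OO_X$-submodule generated by $\Omega_{X_0}$ and the genuine differentials $\dd x_i = x_i \cdot \frac{\dd x_i}{x_i}$. The crux is a weight count: since $\dd x_i$ carries the extra factor $x_i$ of weight $e_i$, a term $f\,\dd x_i$ lies in weight $v$ exactly when $f$ has weight $v - e_i$, which forces $v - e_i \ge 0$, i.e.\@ $\alpha_i \ge 1$. Hence the weight-$v$ part of $\Omega_X$ is $\Omega_{X_0}\,x^v \oplus \bigoplus_{i:\ \alpha_i \ge 1} \OO_{X_0}\, x^v \tfrac{\dd x_i}{x_i}$, and after the twist by $\OO_{X,-v}$ this is the subspace of $E$ spanned by $\Omega_{X_0}$ and those $\frac{\dd x_i}{x_i}$ with $\alpha_i \ge 1$. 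Since $p_i(\tfrac{\dd x_j}{x_j}) = \delta_{ij}$ and $p_i|_{\Omega_{X_0}} = 0$, this is precisely $\{e \in E : p_i(e) = 0 \text{ for all } i \text{ with } \alpha_i = 0\}$, which is the stated formula. The transition maps are the inclusions that switch on the $i$-th generator as $\alpha_i$ passes from $0$ to $1$, and these are compatible with the identity transition maps of the canonical extension $\Omega_X(\log)$, so $\Omega_X$ is realized as a subfunctor as claimed.

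Finally I would check that this description is independent of the coordinate choice, so that the local subspaces glue to a genuine global subfunctor. The $x_i$ are only determined up to $\OO_{X_0}^\times$, and replacing $x_i$ by $u_i x_i$ alters $\frac{\dd x_i}{x_i}$ by $\frac{\dd u_i}{u_i} \in \Omega_{X_0} = \ker(\bigoplus_i p_i)$ (using the exact sequence \ref{ATIYAH}); therefore each residue $p_i \colon E \to \OO_{X_0}$, and hence the cut-out subspace $\{p_i = 0 : \alpha_i = 0\}$, is well defined even though the splitting of $E$ is not.

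The main obstacle I expect is not the algebra but exactly this well-definedness, together with making the local picture rigorous: one must confirm that the completed module of differentials on the formal scheme really is locally free on the indicated weight-$0$ logarithmic basis (so the weight bookkeeping is valid), and that, although the decomposition $E = \Omega_{X_0} \oplus \bigoplus_i \OO_{X_0}\frac{\dd x_i}{x_i}$ is non-canonical, the residues $p_i$ — and thus the resulting subfunctor — are canonical.
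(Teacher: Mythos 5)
Your proposal is correct. The paper states this Proposition without any proof, so there is nothing to compare against; your local computation --- trivializing $R \cong R_0\llbracket x_1,\dots,x_n\rrbracket$, reading off that the weight-$v$ part of $\Omega_X$ picks up the direction $\frac{\dd x_i}{x_i}$ exactly when $\alpha_i \ge 1$ because $\dd x_i = x_i\cdot\frac{\dd x_i}{x_i}$ carries weight $e_i$, and then checking that the residues $p_i$ are independent of the choice of generators since $x_i \mapsto u_i x_i$ shifts $\frac{\dd x_i}{x_i}$ by $\frac{\dd u_i}{u_i} \in \Omega_{X_0}$ --- is precisely the argument the surrounding paragraph on the Atiyah extensions sets up, and it is sound. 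The only point worth making explicit is the identification $R_{v-e_i}\cdot x_i = R_v$ (immediate since each $R_w$ is free of rank one over $R_0$ on the monomial $x^w$), which you use implicitly when rewriting the weight-$v$ part after the twist by $\OO_{X,-v}$.
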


\subsection{Abstract toroidal compactifications}

\begin{PAR}\label{TCSTRAT}
Let $M$ be a smooth $k$-variety. 
Consider an open embedding $M \hookrightarrow \overline{M}$ into a smooth $k$-variety (mostly assumed to be proper), such that
$D:=\overline{M} \setminus M$ is a divisor with strict normal crossings. Consider the coarsest 
statification $\overline{M} = \bigcup_{Y \in \mathcal{S}} Y$ into locally closed subsets such that all components of $D$ are closures of a stratum in the finite set $\mathcal{S}$. The variety $M$ itself will be the unique open stratum. Denote by $n_Y$ the codimension of $\overline{Y}$. 
Consider furthermore a toroidal action $\rho_Y$ of $\M_m^{n_Y}$ on the formal completion $M_Y:=C_{\overline{Y}}(\overline{M})$ of $\overline{M}$ along $\overline{Y}$ which hence establishes $\overline{Y}$ as the invariant subscheme $M_{Y,0}$. For a pair of strata $Y, Z$ we write $Z \le Y$ if $Z \subset\overline{Y}$.
\end{PAR}
\begin{DEF}\label{DEFTC}
The embedding $M \hookrightarrow \overline{M}$ together with the collection $\{\rho_Y\}_Y$ is called a ({\bf partial}, if $\overline{M}$ is not proper) {\bf toroidal compactification} if for each pair $Z \le Y$ of strata we have an  {\em injective} map $\beta_{ZY}: [n_Y] \hookrightarrow [n_Z]$ such that the natural morphism of formal schemes
\[ \xymatrix{
M_Z \ar[r] & M_Y
} \]
is equivariant w.r.t. the action of $\M_m^{n_Y}$, where $\M_m^{n_Y}$ acts via $\beta_{ZY}$ and $\rho_Z$ on $M_Z$. 
\end{DEF}

\begin{BEM}
The map $\beta_{ZY}$ is uniquely determined by the condition in the definition and hence for strata $W \le Z \le Y$ we have
$\beta_{WZ} \beta_{ZY} = \beta_{WY}$.
\end{BEM}

We will regard objects on $\overline{M}$ such as coherent sheaves etc.\@ always with a compatible action of the $\G_m^{n_Y}$ (not necessarily  $\M_m^{n_Y}$) on their completion on 
$M_Y$ for all strata $Y$ in a compatible way. 
\begin{DEF}\label{DEFTCOH}
In particular, let $\cat{$\OO_{\overline{M}}$-tcoh}$ be the category of coherent sheaves with compatible $\G_m^{n_Y}$-actions on the various completions. Denote by 
 $\cat{$\OO_{\overline{M}}$-tcoh-can}$ the full subcategory of those sheaves with compatible $\G_m^{n_Y}$-actions whose completions are all canonical extensions (\ref{CANEXT}). 
\end{DEF}

For example $\Omega^i(\overline{M})$, $T(\overline{M})$ and $\OO_{\overline{M}}$ are naturally objects in $\cat{$\OO_{\overline{M}}$-tcoh}$. 
The former two are not canonical extensions, however. 

\begin{PAR}\label{TCRES}
Each closed stratum $\overline{Y}$ is itself a (partial) toroidal compactification. The completion $C_{\overline{Z}}(\overline{Y})$ is the following formal subscheme
of $C_{\overline{Z}}(\overline{M})$. Its affine pieces are given (with the notation from Definition~\ref{DEFTOROIDAL}) by $R_0 \llbracket R_{e_1}, \dots, R_{e_{n_Z}} \rrbracket $
modulo the ideal generated by $R_{e_{\beta(1)}}, \dots, R_{e_{\beta(n_Y)}}$ (where $\beta=\beta_{ZY}$). The formal scheme $C_{\overline{Z}}(\overline{Y})$ carries an action of $\G_m^{n_Z - n_Y}$. Here
the missing indices not in the image of $\beta$ can be numbered in any way. We denote the corresponding injective map by $\beta_{ZY}^\perp: [n_Z-n_Y] \hookrightarrow [n_Z]$. With the restriction $\beta_{WZ}': [n_Z-n_Y] \hookrightarrow [n_W-n_Y]$ of the transition maps $\beta_{WZ}$ for $W \le Z \le Y$ the scheme $\overline{Y}$ becomes a toroidal compactification. The following commutative diagram shows the compatibility of the chosen numberings: 
\[ \xymatrix{
[n_Z-n_Y] \ar@{^{(}->}[r]^{\beta_{WZ}'} \ar@{^{(}->}[d]^{\beta_{ZY}^\perp} & [n_W-n_Y] \ar@{^{(}->}[d]^{\beta_{WY}^\perp} \\
[n_Z] \ar@{^{(}->}[r]^{\beta_{WZ}} & [n_W]
} \]
\end{PAR}

\begin{LEMMA}
Let $E$ be a coherent sheaf on $\overline{M}$ with compatible $\G_m^{n_Y}$-actions on the respective completions $E_Y$ on $M_Y$. Then for any stratum $Z \le Y$ and $v \in \Z^{n_Y}$ we have that
\[ E_Y(v) \]
is the coherent sheaf on $\overline{Y}$ which (w.r.t. to the restricted structure of toroidal compactification of \ref{TCRES}) corresponds to the functor w.r.t. $Z$:
\[ z \mapsto E_Z(\beta_{ZY}(v) + \beta_{ZY}^\perp(z)). \]
\end{LEMMA}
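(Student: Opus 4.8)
The plan is to reduce the claimed identity of functors to an identity of their graded pieces via Proposition~\ref{PROPTCOH}, and then to deduce the latter from the transitivity of formal completion together with the equivariance built into Definition~\ref{DEFTC}. Throughout write $\beta = \beta_{ZY}$ and $\beta^\perp = \beta_{ZY}^\perp$, so that $\beta$ and $\beta^\perp$ induce an isomorphism $\Z^{n_Y} \oplus \Z^{n_Z-n_Y} \iso \Z^{n_Z}$, $(v,z) \mapsto \beta(v) + \beta^\perp(z)$. Denote by $(E_Y)_v$ and $(E_Z)_w$ the weight-$v$, resp.\ weight-$w$, graded pieces (part 2 of Proposition~\ref{PROPTCOH}) of the $\Gm^{n_Y}$-, resp.\ $\Gm^{n_Z}$-equivariant completions, so that by part 3
\[ E_Y(v) = (E_Y)_v \otimes_{\OO_{\overline Y}} \OO_{C_{\overline Y}(\overline M), -v}, \qquad E_Z(w) = (E_Z)_w \otimes_{\OO_{\overline Z}} \OO_{C_{\overline Z}(\overline M), -w}. \]
Since completion and the passage to graded pieces are local on $\overline Z$, I would work on an affine chart and use the explicit coordinates $x_1, \dots, x_{n_Z}$ of \ref{TCRES}, in which $\overline Y$ is cut out by the $x_{\beta(j)}$ and the toroidal structure of $\overline Y$ along $\overline Z$ is carried by the $x_{\beta^\perp(l)}$.

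The geometric heart of the matter is the transitivity of completion, $C_{\overline Z}(\overline M) \cong C_{\overline Z}\!\left(C_{\overline Y}(\overline M)\right)$, under which $E_Z$ is the completion of $E_Y$ along $\overline Z$. The closed subscheme $\overline Z$ lies in the invariant locus $\overline Y = (C_{\overline Y}(\overline M))_0$ and is therefore $\Gm^{n_Y}$-invariant, so completing along it commutes with the $\Gm^{n_Y}$-weight decomposition of $E_Y$. Reading this off weight by weight, the weight-$v$ part of $E_Z$ for the $\Gm^{n_Y}$-action induced through $\beta$ (Definition~\ref{DEFTC}) is precisely the completion $C_{\overline Z}\!\left((E_Y)_v\right)$, and its further decomposition under the residual $\Gm^{n_Z-n_Y}$-action attached to $\beta^\perp$ yields
\[ \left(C_{\overline Z}\!\left((E_Y)_v\right)\right)_z = (E_Z)_{\beta(v)+\beta^\perp(z)}, \]
because under $\Z^{n_Y}\oplus\Z^{n_Z-n_Y}\iso\Z^{n_Z}$ the $\Gm^{n_Z}$-weight $\beta(v)+\beta^\perp(z)$ is exactly the pair (weight $v$ under $\Gm^{n_Y}$, weight $z$ under $\Gm^{n_Z-n_Y}$).

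It remains to match the line-bundle twists converting graded pieces into functor values. Completing $E_Y(v) = (E_Y)_v \otimes \OO_{C_{\overline Y}(\overline M),-v}$ along $\overline Z$ and extracting the $\Gm^{n_Z-n_Y}$-weight $z$, the twist $\OO_{C_{\overline Y}(\overline M),-v}$ --- which has residual weight $0$ and restricts to the line bundle $\OO_{C_{\overline Z}(\overline M),-\beta(v)}$ on $\overline Z$ (generated by the $x_{\beta(j)}$) --- factors out, so by the previous paragraph the $Z$-functor of $E_Y(v)$ is
\[ (E_Z)_{\beta(v)+\beta^\perp(z)} \otimes \OO_{C_{\overline Z}(\overline M),-\beta(v)} \otimes \OO_{\overline Z/\overline Y, -z}. \]
Here $\OO_{\overline Z/\overline Y,-z}$ is the twist for the restricted toroidal structure of \ref{TCRES}; since the toroidal coordinates of $\overline Y$ along $\overline Z$ are the $x_{\beta^\perp(l)}$, one has $\OO_{\overline Z/\overline Y,-z} \cong \OO_{C_{\overline Z}(\overline M),-\beta^\perp(z)}$. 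Combining the two twists gives $\OO_{C_{\overline Z}(\overline M),-(\beta(v)+\beta^\perp(z))}$, hence exactly $E_Z(\beta(v)+\beta^\perp(z))$, as claimed. I expect the main obstacle to be the commutation in the middle paragraph: one must verify that formal completion along the $\Gm^{n_Y}$-invariant subscheme $\overline Z$ is compatible with the weight grading and that the two $\Gm$-actions assemble, via $\beta$ and $\beta^\perp$, into the single $\Gm^{n_Z}$-action. This is precisely where the equivariance hypothesis of Definition~\ref{DEFTC} and the compatibility of numberings in \ref{TCRES} enter; the surrounding line-bundle bookkeeping is then routine.
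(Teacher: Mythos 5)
Your argument is correct and is essentially the intended one: the paper states this lemma without proof, and the natural justification is exactly what you give — transitivity of formal completion along $\overline{Z}\subset\overline{Y}$, compatibility of the $\G_m^{n_Z}$-weight decomposition of $E_Z$ with the $\G_m^{n_Y}$-decomposition via $\beta_{ZY}$ and the residual $\G_m^{n_Z-n_Y}$-action via $\beta_{ZY}^\perp$, plus the bookkeeping of the twists $\OO_{\cdot,-v}$ from Proposition~\ref{PROPTCOH}. The one point you rightly flag, that completion along the $\G_m^{n_Y}$-invariant subscheme $\overline{Z}$ commutes with the (infinite product) weight decomposition, is routine here because the graded pieces are coherent $\OO_{\overline{Y}}$-modules and completion is exact on coherent modules over noetherian rings.
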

\begin{proof}Left to the reader. 
\end{proof}

\begin{PAR}
For the following we will work on the topological space underlying $\overline{M}$ itself and consider
coherent sheaves $\mathcal{F}$ on $M_Y$ as coherent $C_{\overline{Y}}(\OO_{\overline{M}})$-modules (in the sense of ringed sheaves) on $\overline{M}$. 
Note that we have
\[ (C_{\overline{Y}}(\OO_{\overline{M}}))(U) = \OO_{M_Y}(U \cap \overline{Y}). \]
Note that this is {\em not} quasi-coherent as $\OO_{\overline{M}}$-module, except for the open stratum $M$ itself. 
We write $C_{\overline{Y}}(\OO_{\overline{M}})|_Y$ for the sheaf
\[ U \mapsto \OO_{M_Y}(Y \cap U) \]
and similarly for a sheaf of $\OO_{M_Y}$-modules $\mathcal{F}$ on $M_Y$ we will write $\mathcal{F}|_Y$ for the so defined restriction considered as a sheaf on $\overline{M}$. 
\end{PAR}

\begin{LEMMA}[Glueing lemma] \label{LEMMAGLUE}
Let the following data be given:
\begin{enumerate}
\item For each stratum $Y$ a functor 
\[ F_Y: \Z^{n_Y} \rightarrow \cat{$\overline{Y}$-tcoh-can}  \]
which satisfies the conditions of Proposition~\ref{PROPTCOH}, 3., 
where $\cat{$\overline{Y}$-tcoh-can}$ is the category of toroidal coherent sheaves on $\overline{Y}$ which are canonical extensions (see \ref{DEFTCOH})\footnote{In fact, only the restriction to $Y$ of these sheaves matter. For
technical reasons --- to be able to describe the glueing --- we consider their canonical extensions here.}. 
\item For all $Z \le Y$ an isomorphism of functors
\begin{equation} \label{eqglue}
 \kappa_{ZY}:  \iota^*_{ZY} F_Y  \iso \lim_{\beta_{ZY}} F_Z    
\end{equation} 
which are compatible w.r.t. $Y \le Z \le W$ in the obvious way. 
Here $\iota_{ZY}: \overline{Z} \hookrightarrow \overline{Y}$ is the natural closed embedding. 
\end{enumerate} 
Then there exists a coherent sheaf $E$ on $\overline{M}$ with compatible actions of $\G_m^{n_Y}$ on $C_{\overline{Y}}(E)$ for all $Y$, with isomorphisms of functors
\[ \lambda_Y:  C_{\overline{Y}}(E)(-)|_Y \cong F_Y(-)|_Y   \] 
which for each $Z \le Y$ are compatible with the functors $\kappa_{ZY}$ in the sense 
that for all $v \in Z^{n_Y}$ the diagram
\begin{equation}\label{diaglueing}
\vcenter{
 \xymatrix{
C_{\overline{Y}}(E)|_Y \ar[r]^-{\lambda_Y} \ar[d] & [F_Y]|_Y \ar@{->>}[d]^{\widetilde{\kappa_{ZY}}} \\
C_{\overline{Y}} (C_{\overline{Z}}(E)[\beta^{-1}_{ZY}])|_Z  \ar[r]^-{\lambda_Z} &  (C_{\overline{Y}} ([ p_{\beta_{ZY}}^* \lim_{\beta_{ZY}} F_Z]))|_Z 
} }
\end{equation}
is commutative. Here $[F_Y]$ is the coherent sheaf of $C_{\overline{Y}} (\OO_{\overline{M}})$-modules determined by the functor $F_Y$, and similarly $[ p_{\beta_{ZY}}^* \lim_{\beta_{ZY}} F_Z]$ is the coherent sheaf of $C_{\overline{Z}} (\OO_{\overline{M}})[\beta_{ZY}^{-1}]$-modules determined by the functor $p_{\beta_{ZY}}^* \lim_{\beta_{ZY}} F_Z$.
The morphism $\widetilde{\kappa_{ZY}}$ is the composition 
\[ [F_Y]|_Y \hookrightarrow C_{\overline{Y}} (C_{\overline{Z}} ([F_Y]) [\beta_{ZY}^{-1}]) \iso C_{\overline{Y}} ([p_{\beta_{ZY}}^* \iota_{ZY}^* F_Y]) \iso  C_{\overline{Y}}  [p_{\beta_{ZY}}^* \lim_{\beta_{ZY}} F_Z] \]
where the second isomorphism is induced by the fact that all $F_Y(v)$ are canonical extensions along $\overline{Z}$ (cf.\@ also \ref{CANEXT}). 
In particular $E$ is isomorphic to $F_M$ on the open stratum $M$.
The sheaf $E$ is uniquely determined (up to unique isomorphism) by this property and the isomorphisms $\kappa$. 
\end{LEMMA}
\begin{proof}
We apply \cite[Main theorem 7.6]{Hor16}. 
The sheaves of $\OO_{\overline{M}}$-algebras $R_Y$ of \cite[7.2]{Hor16} are isomorphic to the restriction of the sheaf $C_{\overline{Y}}(\OO_{\overline{M}})$
to any open subset $U \subset \overline{M}$ such that $U \cap \overline{Y} = Y$, the sheaf that we denote by $C_{\overline{Y}}(\OO_{\overline{M}})|_Y$. 

For any pair of strata $Z \le Y$ the sheaf of $\OO_{\overline{M}}$-algebras $R_{Y,Z}$ of \cite[7.2]{Hor16} is, by definition, equal to 
 $C_{\overline{Y}}(R_Z \otimes_{\OO_{\overline{M}}} \OO_U)$ where $U$ is any open subset such that $U \cap \overline{Y} = Y$ and where the tensor product is
 formed in the category of ring sheaves. 
 The sheaf of $\OO_{\overline{M}}$-algebras $C_{\overline{Y}}(R_Z \otimes_{\OO_{\overline{M}}} \OO_U)$ is also isomorphic to a completion of the
 localization $C_{\overline{Z}}(\OO_X)[\beta_{YZ}^{-1}]$ since $\overline{Y} \setminus Y$ is given in formal local coordinates in $C_{\overline{Z}} (\overline{Y})$  
 by the zero locus of $x_{k_1}, \dots, x_{k_{j}}$ where $\{k_1, \dots, k_{j}\}$ is the complement of $\mathrm{im}(\beta)$. 
 
 By the nature of toroidal compactification of $\overline{M}$ we have an action of $\G_m^{n_Y}$ on $R_Y$ and 
 an action of $\G_m^{n_Z}$ on $R_{Y,Z}$ which are compatible (via $\beta_{ZY}$) with the inclusion
 \[ R_Y \hookrightarrow R_{Y,Z}. \]
 The category of $R_Y$-coherent sheaves with $\G_m^{n_Y}$-action is equivalent to the category
 \[ \Fun(\Z^{n_Y}, \cat{$\OO_{Y}$-tcoh})^{f.g.}. \]
 Hence the given collection of functors $\{F_Y\}_Y$ gives such objects by restricting $F_Y$ to $Y$.
 
 From the category of $R_{Y, Z}$-coherent sheaves  with $\G_m^{n_Z}$-action we have still a fully-faithful embedding into the sub-category of
\[ \Fun(\Z^{n_Z}, \cat{$\OO_{Z}$-tcoh}) \]
consisting of the functors which are constant in the directions $e_i$ for  $i \not\in \mathrm{im}(\beta_{ZY})$. 
The glueing datum required by \cite[Lemma~7.5]{Hor16} can therefore be given by diagram (\ref{diaglueing}). 
Hence, \cite[Main theorem 7.6]{Hor16} provides the requested sheaf of $\OO_{\overline{M}}$-modules which is by construction an object in $\cat{$\OO_{\overline{M}}$-tcoh}$.
\end{proof}

\subsection{Toroidal compactifications of (mixed) Shimura varieties}\label{SHIMURA1}

\begin{PAR}
The standard examples of abstract toroidal compactifications in the sense of Definition~\ref{DEFTC} are toroidal compactifications of Shimura varieties \cite{AMRT}. 
Since we are interested only in the situation over a field, we can use the theory of canonical models of toroidal compactifications of mixed Shimura varieties due to Pink \cite[2.1]{Pink}. We will use the language of \cite{Thesis} (cf.\@ also \cite[2.5]{Hor14}) with is concerned with extensions of the theory over the integers (in the case of good reduction of Hodge type mixed Shimura varieties). For the automorphic data referred to in the next section we rely on \cite[2.5]{Hor14} also for the rational case. In that case the ideas for the proofs of the theorems in \cite[2.5.]{Hor14} (which are given in \cite{Thesis}) are essentially due to Harris \cite{Harris85, Harris86, HZ94}. 
\end{PAR}

\begin{PAR}
For each pure (or mixed) rational Shimura datum $\nSD=(\nP_{\nSD}, \nX_{\nSD}, \nh_{\nSD})$ in the sense of \cite[2.2.3]{Hor14}\footnote{where the integrality property has to be ignored.} or \cite[2.1]{Pink},
and for each sufficiently small compact open subgroup $K \subset \nP_{\nSD}(\Af)$ there is an associated
Shimura variety $\nSh({}^K \nSD)$ which is a smooth quasi-projective variety defined over the reflex field $E(\nSD)$.

Furthermore, for each smooth $K$-admissible rational polyhedral cone decomposition $\nRPCD$ for $\nSD$ (cf.\@ \cite[2.2.23]{Hor14}) there is a (partial) toroidal compactification
$\nSh({}^K_\nRPCD \nSD)$ which contains  $\nSh({}^K \nSD)$ as an open subvariety whose complement is a divisor with strict normal crossings, if $K$ is sufficiently small. 
This and the following is a summary of \cite[Main Theorem~2.5.9]{Hor14}. If $\nRPCD$ is chosen (and this is always possible) to be projective and complete then $\nSh({}^K_\nRPCD \nSD)$ is a smooth projective variety defined over the reflex field $E(\nSD)$.  
This situation thus gives rise to a stratification of $\nSh({}^K_\nRPCD \nSD)$ as considered in \ref{TCSTRAT}. Each stratum corresponds furthermore to an orbit of rational polyhedral cones in $\Delta$. For each stratum $Y$ in this stratification there is a mixed Shimura datum
$\nSDi=(\nP_{\nSDi}, \nX_{\nSDi}, \nh_{\nSDi})$ such that $\nP_{\nSDi}$ is a subgroup of $\nP_\nSD$ (if $\nSD$ is pure, this is a certain normal subgroup of the $\Q$-parabolic of $\nP_\nSD$ describing the corresponding boundary component in the Baily-Borel compactification). The boundary component $\nSDi$ is determined only up to conjugation. Furthermore, $\nRPCD$ restricts to a rational polyhedral cone decomposition $\nRPCD_Y$ for $\nSDi$. 
The partial toroidal compactification of the mixed Shimura variety $\nSh({}^{K_Y}_{\nRPCD_Y} \nSDi)$ has a matching stratum $\widetilde{Y}$ and there is an isomorphism of formal schemes (assuming that $K$ is small enough)
\[ C_{\overline{Y}} \nSh({}^{K}_{\nRPCD} \nSD)  \cong  C_{\overline{\widetilde{Y}}} \nSh({}^{K_Y}_{\nRPCD_Y} \nSDi).  \]
Furthermore, the mixed Shimura variety $\nSh({}^{K_Y} \nSDi)$ is a torus torsor over another mixed Shimura variety $\nSh({}^{K_Y'} \nSDi/U)$  where $U$ is a subgroup of $\nU_{\nSDi}$ (a subgroup of the center of the unipotent radical of $\nP_{\nSDi}$ determined by the mixed Shimura datum) and the action of the torus extends to $\nSh({}^{K_Y}_{\Delta_Y} \nSDi)$ (cf.\@ \cite[2.5.8]{Hor14}). 
The acting torus gets canonically identified with $\G_m^{n_Y}$ (up to numbering of the coordinates) by means of the integral basis of the $n_Y$-dimensional rational polyhedral cone describing $Y$. By construction of the toroidal compactification this 
action extends to $\M_m^{n_Y}$ in such a way that $C_{\overline{\widetilde{Y}}} \nSh({}^{K_Y}_{\nRPCD_Y} \nSDi)$ becomes a toroidal formal scheme in the sense of \ref{DEFTOROIDAL}. The functoriality of the theory implies that the actions of the tori match for pairs of strata $Z \le Y$. Thus $\overline{M}:=\nSh({}^K_\nRPCD \nSD)$ is an abstract toroidal compactification in the sense of Definition~\ref{DEFTC}.
\end{PAR}

\section{Automorphic data}

\subsection{Automorphic data on an abstract toroidal compactification}\label{SECTAUTDATA}

Let $\overline{M}$ be an abstract toroidal compactification (Definition~\ref{DEFTC}). 

\begin{DEF}\label{DEFAUTDATA}
{\bf Automorphic data} on the abstract toroidal compactification $\overline{M}$ consist of a collection $\{ P_Y, M^\vee_Y, B_Y, \dots \}_Y$ indexed by the strata $Y$ of $\overline{M}$ with
\begin{enumerate}
\item a linear algebraic group $P_Y$ (not necessarily reductive);
\item an open and closed subscheme $M_Y^\vee$
of the moduli space of quasi-parabolic subschemes of $P_Y$. We will call these spaces {\bf generalized flag varieties}. If $P_Y$ is reductive then they are projective. We consider the {\em right} action of $P_Y$ on $M_Y^\vee$ by conjugation;
\item a diagram of formal schemes
\[ \xymatrix{ M_Y& \ar[l]_-\pi B_Y \ar[r]^-p & M^\vee_Y  } \]
in which $\pi$ is a right $P_Y$-torsor and $p$ is a $P_Y$-equivariant morphism;
\item a lift of the $\M_m^{n_Y}$-action to $B_Y$ in a $P_Y$-equivariant way, and such that $p$ is $\M_m^{n_Y}$-invariant. We assume that $B_Y$ is a canonical extension, i.e. isomorphic to $\Pi^{-1} B_{\overline{Y}}$ for some bundle on $\overline{Y}$ with its induced $\M_m^{n_Y}$-action, where $\Pi: M_Y \rightarrow \overline{Y}$ is the projection;
(If a $k$-rational point of $M^\vee$ exists, corresponding to a quasi-parabolic $Q_Y$, such a datum is equivalent to a $Q_Y$-principal bundle on $\overline{Y}$.)
\end{enumerate}
together with
\begin{enumerate}
\item[5.] for strata $Z \le Y$ closed embeddings of algebraic groups $\alpha_{ZY}: P_Z \hookrightarrow P_Y$ which induce {\em open embeddings} $M_Z^\vee \hookrightarrow M^\vee_Y$, and $P_Z$- and $\M_m^{n_Y}$-equivariant morphisms $\rho_{ZY}: B_Z \rightarrow B_Y$ such that the diagram of formal schemes
\[ \xymatrix{ M_Z \ar[d] & \ar[l]_-\pi B_Z \ar[d] \ar[r]^-p & M^\vee_Z \ar@{^{(}->}[d]  \\
M_Y & \ar[l]_-\pi B_Y \ar[r]^-p & M^\vee_Y } \]
commutes. The morphisms have to be functorial w.r.t.\@ three strata $W \le Z \le Y$.
\end{enumerate}
\end{DEF}
In other words, if $M^\vee$ contains a $k$-rational point $Q_M$, automorphic data is roughly given by a $Q_M$-torsor on $\overline{M}$ such that the structure group restricts to $Q_Y$ on the formal completion along $\overline{Y}$ in an 
$\M_m^{n_Y}$-equivariant way. Here $Q_Y$ is the quasi-parabolic in $M^\vee_Y(k)$ mapping to $Q_M$. 

\begin{PAR}\label{CLASSAUTOVB}
The diagram \ref{DEFAUTDATA}, 3.\@ for $Y=M$ can be equivalently described by a morphism of Artin stacks (omitting the subscripts $Y$)
\[ \Xi: \overline{M} \rightarrow \left[ P \backslash M^\vee \right].  \]
Let $\mathcal{E}$ be a vector bundle on $\left[ P \backslash M^\vee \right]$, i.e.\@ a $P$-equivariant vector bundle on $M^\vee$.
The pull-back $\Xi^*\mathcal{E}$ is called the automorphic vector bundle associated with $\mathcal{E}$. 
It can be explicitly described as follows: Note that there is a equivalence of categories between $P$-equivariant vector bundles on $B$ and vector bundles on $\overline{M}$. The vector bundle
$\Xi^*\mathcal{E}$ is the vector bundle on $\overline{M}$ corresponding to the $P$-equivariant vector bundle $p^*\mathcal{E}$. 
This construction will be generalized in Section~\ref{FJCAT} (cf.\@ \ref{EXCANEXT} for the special case).
\end{PAR}

\begin{PAR}\label{AXIOMSF}
Consider the following sequence of vector bundles on $B_Y$ (which are all $\M_n^{n_Y}$-equivariant and canonical extensions). 
We assume given a logarithmic Ehresmann connection on $B_Y$, i.e.\@ a section $s_Y$ which is $P_Y$-equivariant and $\M_n^{n_Y}$-equivariant: 
\[ \xymatrix{ 0 \ar[r] & \OO_{B_Y} \otimes \Lie(P_Y) =T_{B_Y}^{\pi-\mathrm{vert}} \ar[r] & T_{B_Y}(\log) \ar[r] & \ar@/_10pt/[l]_-{s_Y} \pi^* T_{M_Y}(\log) \ar[r] & 0. } \]
Note that $P_Y$ acts on $\OO_{B_Y}$ by translation and on  $\Lie(P_Y)$ via $\Ad$. 
Since everything is $\M_n^{n_Y}$-equivariant and a canonical extension, this is equivalent to giving a $P_Y$-equivariant section of the sequence 
\begin{equation}\label{connectionony} 
\xymatrix{ 0 \ar[r] & \OO_{\pi^{-1}\overline{Y}} \otimes \Lie(P_Y) 
\ar[r] & \OO_{\pi^{-1}\overline{Y}} \otimes  T_{B_Y}(\log) \ar[r] & \ar@/_10pt/[l]_-{s_Y'} \pi^* (\OO_{\overline{Y}}  \otimes T_{M_Y}(\log)) \ar[r] & 0. } 
\end{equation}

Furthermore these sections are supposed to be compatible w.r.t.\@ the relation $Z \le Y$ on strata. 
Such a datum will be called {\bf automorphic data with logarithmic connection} on the toroidal compactification $\overline{M}$.
\end{PAR}

\begin{PAR}
We define the $P_Y$-sub-vector bundle $T_{B_Y}^{\mathrm{horz}}$ as the image of $s_Y$, and get a $P_Y$-equivariant decomposition:
\[ T_{B_Y}(\log) = T_{B_Y}^{\pi-\mathrm{vert}}  \oplus  T_{B_Y}^{\mathrm{horz}}. \]

The connection is called {\bf flat}, if 
\begin{enumerate}
\item[(F)] $T_{B_Y}^{\mathrm{horz}}$ is closed under the Lie bracket\footnote{Note that the Lie bracket on $T_{B_Y}$ restricts to $T_{B_Y}(\log)$.}. 
\end{enumerate}
We denote  the corresponding projection operators by $P_\pi^{\mathrm{vert}}$ and $P_\pi^{\mathrm{horz}}$. If $s_Y$ is flat, it induces a homomorphism of ring-sheaves
\begin{equation} \label{eqnu}
\nu: \pi^{-1} \mathcal{D}_{M_Y}(\log) \rightarrow \mathcal{D}_{B_Y}(\log).  
\end{equation}
\end{PAR}

\begin{BEM}
Let $Y$ be a stratum of positive codimension and $D_i$ the components of the divisor with $Y \subset D_i$. 
We have a $P_Y$-equivariant commutative diagram with exact rows and columns
\[ \xymatrix{ 
& & 0 \ar[d] & 0 \ar[d] \\ 
& & \bigoplus_i \OO_{\pi^{-1}\overline{Y}} \ar[d]^{1 \mapsto \xi_{i,Y}'} \ar@{=}[r] & \bigoplus_i \OO_{\pi^{-1}\overline{Y}} \ar[d]^{1 \mapsto \xi_{i,Y}}  \\
0 \ar[r] & \Lie(P_Y) \otimes \OO_{\pi^{-1}\overline{Y}}   \ar@{=}[d] \ar[r] &  T_{B_Y}(\log) \otimes \ar[r] \OO_{\pi^{-1}\overline{Y}}  \ar[d] & \ar@/_10pt/[l]_-{s_Y'} \pi^* (\OO_{\overline{Y}} \otimes  T_{M_Y}(\log)) \ar[r]  \ar[d] & 0.   \\
0 \ar[r] & \Lie(P_Y) \otimes \OO_{\pi^{-1}\overline{Y}}  \ar[r]& T_{\pi^{-1}\overline{Y}} \ar[r]  \ar[d]& 
\pi^* T_{\overline{Y}} \ar[r] \ar[d] & 0 \\
& & 0 & 0 } \]
where we denote by $\xi_{i,Y}$, resp.\@ $\xi_{i,Y}'$ the restriction of $x_i \frac{\partial}{\partial x_i}$ for $x_i$ a local equation for $D_i$, resp.\@ $\pi^{-1}D_i$, to $\overline{Y}$, resp.\@ $\pi^{-1}\overline{Y}$. 
Those are independent of the choice of the parameter $x_i$.
We have
\[ \Res_{D_i}(s_Y) =   \xi_{i,Y}' - s_Y'(\xi_{i,Y}) = P^{\mathrm{vert}}_\pi(\xi_{i,Y}') \]
which is a $P_Y$-invariant $\Lie(P)$-valued function on $\pi^{-1}\overline{Y}$. This may be taken as the definition of the residue. 
If it is trivial, the datum can be given by a flat connection on the restriction of $B_Y$ to $\overline{Y}$. For strata $Z \le Y$, we have
\begin{equation}\label{rescompstrata} \rho_{ZY}^{-1} (\Res_{D_i}(s_Y))  =  \Res_{D_i}(s_Z). \end{equation}
\end{BEM}

\begin{PAR}\label{AXIOMSM}
Note that, by the structure of toroidal compactification,
we have a sequence dual to sequence (\ref{explicitlog})
\[ \xymatrix{ 0 \ar[r] & \bigoplus_{i=1}^{n_Y} \OO_{M_Y} \cdot \mathrm{can}_{i,M_Y} \ar[r] & T_{M_Y}(\log) \ar[r] & \Pi^* T_{\overline{Y}} \ar[r] & 0 } \]
where $\mathrm{can}_{i,M_Y}$ are the fundamental vector fields for the $\G_m^{n_Y}$-action on $M_Y$, and $\Pi$ is the projection to $\overline{Y}$. 
Similarly for $B_Y$. 

Since $\can_{i,B_Y} |_{\pi^{-1}\overline{Y}} = \xi_{i,Y}'$, we have therefore
\[ \Res_{D_i}(s_Y) =  P^{\mathrm{vert}}_\pi(\mathrm{can}_{i,B_Y}) |_{\pi^{-1}\overline{Y}}. \]

The following axiom will be called the {\bf unipotent monodromy condition}:
\begin{itemize}
\item[(M)] For any $i$, we have $P^{\mathrm{vert}}_\pi(\mathrm{can}_{i,B_Y}) \in \Lie(U^{(i)}) \otimes \OO_{B_Y}$, where
 $\Lie(U^{(i)})$ is a Lie subalgebra of $\Lie(P_Y)$ given by a 1-dimensional {\em normal unipotent} subgroup $\G_a \cong U^{(i)} \subset P_Y$.
\end{itemize}

Since everything is $\M_n$-equivariant, we could state the condition equivalently as $\Res_{D_i}(s_Y) \in \mathfrak{u}_Y^{(i)} \otimes \OO_{\pi^{-1}\overline{Y}}$.
\end{PAR}

\begin{BEM}Axioms (F) and (M) are only concerned with the bundles $M_Y \leftarrow B_Y$. For $k=\C$ 
suppose that $\Pi$ and the local equations $x_i$ of weight $e_i$ converge on $\overline{M}(\C)$ in a neighborhood $U \supset \overline{Y}$. Then for each base point $b \in B$ lying over a point in $U$,
the bundle $B$ with flat connection corresponds to a homomorphism $\pi_1(M) \rightarrow P(\C)$ (Monodromy at $b$). Let $M_i$ be the image in $P(\C)$ of a loop around $D_i$.
 We have then
\[ M_i = \exp(- 2 \pi \sqrt{-1} \cdot P^{\mathrm{vert}}_\pi(\mathrm{can}_{i,B_Y})(b))  \]
(the choice of $\sqrt{-1}$ corresponds to the orientation of the loop.)
The compatibility (\ref{rescompstrata}) shows that $M_i$ lies in the unipotent subgroup $U^{(i)} \triangleleft P_Y(\C) \subset P(\C)$ for any $Y \subset D_i$. This explains the name of the axiom (M). 	 
\end{BEM}

Axiom (M) has the following immediate consequence:
\begin{LEMMA} \label{LEMMAMONODROMYAXIOM}
We have $p(P^{\mathrm{vert}}_\pi(\mathrm{can}_{i,B_Y})) \in p^* T_{M^\vee}^{(i)}$ (or equivalently $p(P^{\mathrm{horz}}_\pi(\mathrm{can}_{i,B_Y}))  \in p^* T_{M^\vee}^{(i)}$), where
$T_{M^\vee}^{(i)}$ is the subbundle of $T_{M^\vee}$ induced by a Lie subalgebra $\mathfrak{u}_Y^{(i)} \subseteq \Lie(P_Y)$ given by a 1-dimensional {\em normal unipotent} subgroup $\G_a \cong U^{(i)} \subset P_Y$.
\end{LEMMA}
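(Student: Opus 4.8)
The plan is to deduce the statement directly from axiom (M), once one understands how the differential of $p$ acts on the $\pi$-vertical tangent directions. The main preliminary point I would establish is the following description of $dp$ restricted to the vertical subbundle $T_{B_Y}^{\pi-\mathrm{vert}}$. The torsor structure furnishes the canonical trivialization $T_{B_Y}^{\pi-\mathrm{vert}} = \OO_{B_Y}\otimes\mathfrak{p}_Y$ by fundamental vector fields, so that $\xi\in\mathfrak{p}_Y$ corresponds to the field $b\mapsto\frac{d}{dt}\big|_{t=0}\, b\cdot\exp(t\xi)$. Since $p$ is $P_Y$-equivariant, the identity $p(b\cdot\exp(t\xi)) = p(b)\cdot\exp(t\xi)$ shows that $dp$ sends this field to the fundamental vector field of the $P_Y$-action on $M_Y^\vee$ at $p(b)$. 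In other words, the composite
\[ \OO_{B_Y}\otimes\mathfrak{p}_Y = T_{B_Y}^{\pi-\mathrm{vert}} \hookrightarrow T_{B_Y} \xrightarrow{dp} p^* T_{M^\vee} \]
is exactly $p^*$ of the infinitesimal action map $a\colon \OO_{M^\vee}\otimes\mathfrak{p}_Y \to T_{M^\vee}$. By construction $T_{M^\vee}^{(i)}$ is the image of the restriction $a|_{\mathfrak{u}_Y^{(i)}}$ of $a$ to $\mathfrak{u}_Y^{(i)}\otimes\OO_{M^\vee}$.

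Granting this, the first assertion is immediate. By axiom (M) the section $P^{\mathrm{vert}}_\pi(\mathrm{can}_{i,B_Y})$ of $T_{B_Y}^{\pi-\mathrm{vert}} = \OO_{B_Y}\otimes\mathfrak{p}_Y$ in fact lies in the subsheaf $\mathfrak{u}_Y^{(i)}\otimes\OO_{B_Y}$. Applying $dp$ and using the identification above, its image lies in $p^*(\operatorname{im}(a|_{\mathfrak{u}_Y^{(i)}})) = p^* T_{M^\vee}^{(i)}$, which is precisely the claim $p(P^{\mathrm{vert}}_\pi(\mathrm{can}_{i,B_Y})) \in p^* T_{M^\vee}^{(i)}$. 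For the parenthetical equivalence I would invoke that $p$ is $\M_m^{n_Y}$-invariant (part of Definition~\ref{DEFAUTDATA}), hence the fundamental field $\mathrm{can}_{i,B_Y}$ of the lifted $\G_m^{n_Y}$-action is tangent to the fibres of $p$, i.e.\ $dp(\mathrm{can}_{i,B_Y}) = 0$. Decomposing $\mathrm{can}_{i,B_Y} = P^{\mathrm{vert}}_\pi(\mathrm{can}_{i,B_Y}) + P^{\mathrm{horz}}_\pi(\mathrm{can}_{i,B_Y})$ along the decomposition of (F) and applying $dp$ then gives $p(P^{\mathrm{vert}}_\pi(\mathrm{can}_{i,B_Y})) = - p(P^{\mathrm{horz}}_\pi(\mathrm{can}_{i,B_Y}))$, so the two membership statements coincide.

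I expect the only genuinely non-formal step to be the first paragraph: verifying that $dp$ on vertical vectors is literally $p^*$ of the infinitesimal action, and checking that $T_{M^\vee}^{(i)}$ is a bona fide subbundle rather than merely a subsheaf. Here the \emph{normality} of $U^{(i)}$ in $P_Y$ is what is used, as it forces $\operatorname{im}(a|_{\mathfrak{u}_Y^{(i)}})$ to be a $P_Y$-stable subsheaf of constant rank, hence an honest subbundle of $T_{M^\vee}$. Everything remaining is a formal manipulation of the Ehresmann decomposition together with the $P_Y$- and $\M_m^{n_Y}$-equivariance already built into the automorphic data.
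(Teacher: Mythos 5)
Your argument is correct and is exactly the reasoning the paper has in mind: it states the lemma as an ``immediate consequence'' of axiom (M) with no written proof, and the content of that immediacy is precisely your identification of $dp$ on $T_{B_Y}^{\pi-\mathrm{vert}}\cong\OO_{B_Y}\otimes\mathfrak{p}_Y$ with the pullback of the infinitesimal $P_Y$-action (via $P_Y$-equivariance of $p$), combined with the $\M_m^{n_Y}$-invariance of $p$ for the parenthetical equivalence. Your remark that normality of $U^{(i)}$ is what makes $T_{M^\vee}^{(i)}$ a genuine base-point-independent subbundle also matches the paper's later discussion in \ref{ATIYAH2}.
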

Note that because of the normality of $U^{(i)}$ the bundle $T_{M^\vee}^{(i)}$ is $P_Y$-equivariant itself.

\begin{PAR}\label{AXIOMST}
The automorphic data satisfies {\bf Torelli}\footnote{this rather corresponds to  classical {\em infinitesimal} Torelli theorems}, if we have in addition
\begin{itemize}
\item[(T)]  a direct sum decomposition
\[ T_{B_Y}(\log) = T_{B_Y}^{p-\mathrm{vert}}(\log)  \oplus  T_{B_Y}^{\mathrm{horz}} \]
where $T_{B_Y}^{p-\mathrm{vert}}(\log)$ is the intersection of $T_{B_Y}^{p-\mathrm{vert}}$ with $T_{B_Y}(\log)$ in $T_{B_Y}$.
\end{itemize}

Since the morphism $\pi^{-1}\overline{Y} \rightarrow M_Y^\vee$ is a submersion (because $P$ maps $\pi^{-1}\overline{Y}$ into itself) $T_{B_Y}(\log) \to p^* T_{M^\vee_Y}$ is still surjective, and we have again an exact sequence with section
\[ \xymatrix{ 0 \ar[r] & T_{B_Y}^{p-\mathrm{vert}}(\log) \ar[r] & T_{B_Y}(\log) \ar[r] & \ar@/_10pt/[l]_-{s} p^* T_{M^\vee_Y} \ar[r] & 0. } \]
whose image is $T_{B_Y}^{\mathrm{horz}}$. 

Hence Torelli (T) induces an isomorphism
\[ p^* T_{M^\vee} \cong \pi^* T_M(\log) \] 
and in the same way as before, if $s_Y$ is in addition flat, it induces a homomorphism of ring-sheaves
\begin{equation} \label{eqmu}
\mu: p^{-1} \mathcal{D}_{M_Y^\vee} \rightarrow \mathcal{D}_{B_Y}(\log).  
\end{equation} 
\end{PAR}

\begin{PAR}\label{AXIOMSB}
We also consider the following axiom (called the {\bf boundary vanishing condition}):
\begin{itemize}
\item[(B)] For all strata $Y \not= M$ we have:
$H^i(\left[ M^\vee_Y / P_Y\right], \omega_{M_Y^\vee}) = 0$ for $i \ge \dim(Y)$
\end{itemize}
(cf.\@ Section~\ref{SECTSTACKS} for the notation).
Here $\omega_{M_Y^\vee} = \Omega^{n}_{M^\vee_Y} $ is the highest power of the $P_Y$-equivariant sheaf of differential forms on $M^\vee_Y$. 
\end{PAR}

\subsection{Generalized flag varieties and representations of quasi-parabolic subgroups}\label{SECTSTACKS}

\begin{PAR}
For a linear algebraic group $P$ and a quasi-parabolic subgroup $Q$ we have several functors between $Q$-representations, $P$-representations and (equivariant) coherent sheaves on the quasi-projective variety $M^\vee = Q \backslash P$ (generalized flag variety)\footnote{Hence, in contrast to the last section, we explicitly assume for simplicity that  $M^\vee$ has a $k$-rational point with corresponding quasi-parabolic $Q$.}.
These functors are best understood in the language of Artin stacks. We will not use this theory explicitly but mention it as a guiding principle because it so much clarifies the relations. All representations are, of course, understood to be {\em algebraic}.
We have the following diagram of morphisms of Artin stacks where all stacks are quotient stacks (even schemes in the right-most column):
\begin{equation}\label{eqdiastacks} \vcenter{\xymatrix{
\left[  \cdot / Q \right] \ar[r]_-\sim^-{a} & \left[ M^\vee / P \right]   \ar[d]^-b & M^\vee \ar[d]^-d \ar[l]_-c \\
& \left[   \cdot / P \right] \ar@/^2pt/[r]^-f & \spec(k) \ar@/^2pt/[l]^-e
} }
\end{equation}

We denote the categories of (quasi-)coherent sheaves on a stack $X$ by $\cat{$X$-(q)coh}$ or sometimes by $\cat{$\OO_X$-(q)coh}$. For the particular stacks above, we get
\[
\begin{array}{rl}
\cat{$\left[  \cdot / Q \right]$-coh} & \text{category of finite-dimensional algebraic $Q$-representations in $k$-vector spaces;} \\
\cat{$\left[  \cdot / P \right]$-coh} & \text{category of finite-dimensional algebraic $P$-representations in $k$-vector spaces;} \\
\cat{$\left[ M^\vee / P \right]$-coh} & \text{category  of $P$-equivariant finite dimensional  vector bundles on $M^\vee$;} \\
\cat{$M^\vee$-coh} &  \text{category of coherent sheaves on $M^\vee$;} \\
\cat{$\spec(k)$-coh} &  \text{category of finite-dimensional $k$-vector spaces,}  
\end{array}
\]
and similarly for the categories of quasi-coherent sheaves.

The corresponding pull-back and  (derived) push-forward functors between the categories of (quasi\nobreakdash-)coherent sheaves are given as follows.
\begin{itemize}
\item[$a_*$] associates with a $Q$-representation $V$ a locally free $P$-equivariant sheaf on $M^\vee$. The total space can be described as $(V \times P) / Q$ where $Q$ acts on $V$ and $P$. It defines an equivalence of the category of finite-dimensional $Q$-representations and coherent $P$-equivariant sheaves on $M^\vee$.  
\item[$a^*$] is the inverse of $a_*$, evaluation at the choosen base point of $M^\vee$.
\item[$b_*$] global sections on $M^\vee$, remembering the induced $P$-action. The right derived functors give the cohomology on $M^\vee$ equipped with the induced $P$-action.
\item[$b^*$] associates with a $P$-representation $V$ the  coherent sheaf $V \otimes \OO_{M^\vee}$ with the natural $P$-action. 
\item[$c^*$] forgets the $P$-action.
\item[$d_*$] global sections on $M^\vee$. The right derived functors are the cohomology on $M^\vee$.
\item[$d^*$] associates with a vector space $V$ the coherent sheaf $V \otimes \OO_{M^\vee}$.
\item[$e_*$] induction $\Ind_{\{e\}}^P(-)$, associates with a vector space $V$ the $P$-representation $V \otimes \OO(P)$. 
\item[$e^*$] forgets the $P$-action.
\item[$f_*$] associates with a $P$-representation the vector space of $P$-invariants. This functor is exact if $P$ is reductive. Otherwise the right derived functors are the (Hochschild) group cohomology of $P$ with values in the respective representation.
\item[$f^*$] equips a vector space $V$ with the trivial $P$-representation. 
\end{itemize}
The composed functor $a^* b^*$ is the forgetful functor considering a $P$-representation as a $Q$-representation.
Its right adjoint, the composed functor $b_* a_*$, is therefore also called $\Ind_{Q}^P(-)$ but it is not exact in general.

\end{PAR}

For a stack $X$, 
we denote by $H^i(X, \mathcal{E})$ the higher derived functors of $\pi_*$ evaluated at the (quasi\nobreakdash-)coherent sheaf $\mathcal{E}$, where $\pi$ is the structural morphism.
For example $H^i(\left[  \cdot / P \right], \mathcal{E})$ denotes the (Hochschild) cohomology of $P$ with values in the representation $\mathcal{E}$. 

We will use the following Lemma and its obvious consequences when one of the functors is exact without further mentioning. 

\begin{LEMMA}
For all compositions of push-forward functors along morphisms of Artin stacks we have corresponding Grothendieck spectral sequences of composed functors. 
\end{LEMMA}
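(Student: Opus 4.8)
The plan is to reduce the statement to the classical Grothendieck spectral sequence of a composite of functors, and to verify its hypotheses directly for push-forward along the morphisms of the Artin stacks under consideration. Recall that if $F\colon \mathcal{A} \to \mathcal{B}$ and $G\colon \mathcal{B} \to \mathcal{C}$ are left-exact functors between abelian categories with enough injectives, and $F$ carries injective objects to $G$-acyclic objects, then there is a convergent spectral sequence
\[ E_2^{p,q} = R^p G\, (R^q F\, (A)) \Rightarrow R^{p+q}(G \circ F)(A). \]
The categories of quasi-coherent sheaves on the quotient stacks appearing in (\ref{eqdiastacks}) have enough injectives (they are Grothendieck abelian categories), and every push-forward is left-exact as a right adjoint, so the only thing to check is the acyclicity hypothesis for each relevant pair of composable push-forwards.

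The key point I would establish is that, in each case under consideration, the push-forward functor preserves injective objects. Indeed, each push-forward $\Pi_*$ among the stacks of (\ref{eqdiastacks}) has a left adjoint, the pull-back $\Pi^*$, and in every case this pull-back is exact: $a^*$ is evaluation at the base point, $b^*$ and $d^*$ are the functors $V \mapsto V \otimes_k \OO$, which are exact because tensoring over the field $k$ is exact, $c^*$ and $e^*$ are the forgetful functors for a group action, and $f^*$ equips a vector space with the trivial action — all manifestly exact. Whenever a functor has an exact left adjoint it preserves injectives: if $I$ is injective and $L = \Pi^*$ is exact, then $\Hom(-, \Pi_* I) = \Hom(L(-), I)$ is an exact contravariant functor, so $\Pi_* I$ is again injective. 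Hence each push-forward in (\ref{eqdiastacks}) sends injectives to injectives, which are in particular acyclic for any further push-forward.

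With this in hand the conclusion is immediate. For a composition $G \circ F = \Pi'_* \circ \Pi_*$ of two push-forwards, the functor $F = \Pi_*$ is left-exact and sends injectives to injective (hence $G$-acyclic) objects, so the Grothendieck machinery yields
\[ E_2^{p,q} = R^p \Pi'_*\, (R^q \Pi_*\, \mathcal{F}) \Rightarrow R^{p+q}(\Pi' \circ \Pi)_*\, \mathcal{F}, \]
using that $(\Pi' \circ \Pi)_* = \Pi'_* \circ \Pi_*$ on push-forwards. Longer compositions follow by iteration, grouping all but the last functor into a single left-exact functor that still preserves injectives. The same conclusion passes to the coherent subcategories, since the derived push-forwards one obtains (for instance the Hochschild cohomology computed by $f_*$, or the sheaf cohomology of $M^\vee$ computed by $d_*$) agree with their quasi-coherent counterparts.

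I do not anticipate a genuine obstacle: once one observes that every pull-back among these stacks is exact, injective-preservation and therefore the spectral sequence are formal consequences of Grothendieck's theorem. The only point deserving mild care is the bookkeeping that the derived functors produced really are the intended cohomology theories, which is exactly the content of the list of functors following (\ref{eqdiastacks}); it is this identification, rather than the existence of the spectral sequence itself, where attention is warranted.
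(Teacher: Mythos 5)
Your proposal is correct, and it is in fact more than the paper offers: the paper's ``proof'' consists of a citation to Jantzen, and the standard argument behind that citation is exactly the one you give --- every pull-back in the diagram (\ref{eqdiastacks}) is an exact left adjoint, so each push-forward preserves injectives, and the Grothendieck spectral sequence applies in the quasi-coherent (Grothendieck abelian) categories, with the coherent case handled by agreement of derived functors. Your closing remark is also the right place to put the emphasis: the only real content is the identification of the resulting derived functors with sheaf cohomology on $M^\vee$ and Hochschild cohomology of the groups, which is what the cited reference supplies.
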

\begin{proof}
See e.g.\@ \cite[Tag~070A]{stacks-project}. Cf.\@ also \cite{Jantzen} for more elementary statements regarding the stacks appearing in this section.
\end{proof}

\subsection{Jet bundles on generalized flag varieties}\label{JET}

\begin{PAR}
We start with a general discussion of jet bundles and differential operators.
Let $X$ be a smooth $k$-variety and $X^{(n)}$ the $n$-th diagonal, i.e.\@
\[ X^{(n)} \hookrightarrow X \times X   \]
is the subscheme defined by $\mathcal{J}^n$ 
where $\mathcal{J}$ is the ideal sheaf of the diagonal. Let $\mathcal{E}$ be a vector bundle on $X$.

We have the two projections:
\[ \xymatrix{
& X^{(n)} \ar[rd]^{\pr_2} \ar[ld]_{\pr_1}  \\
X & & X
} \]

One defines the {\bf $n$-th jet bundle} $J^n \mathcal{E}$ by
\[ J^n \mathcal{E} = \pr_{1,*} \pr_2^*\mathcal{E} \]
which is always equipped with a surjective map
\[ J^n \mathcal{E} \rightarrow \mathcal{E}, \]
induced by the unit $\mathcal{E} \rightarrow \Delta_* \Delta^*\mathcal{E}$ where $\Delta: X \hookrightarrow X^{(n)}$ is the diagonal.
Since $\OO_{X^{(n)}} = \pr_1^*\OO_X = \pr_2^*\OO_X$ there is also a splitting of this map in the case $\mathcal{E} = \OO_X$:
\[ \OO_X \rightarrow J^n \OO_X . \]
\end{PAR}

\begin{PAR}
For two vector bundles $\mathcal{E}$ and $\mathcal{F}$ the sheaf of differential operators (of degree $\le n$) is defined as
\[ \mathcal{D}^{\le n}(\mathcal{E}, \mathcal{F}) := \mathcal{HOM}_{\OO_X}( J^n \mathcal{E}, \mathcal{F}). \]

The bundle $J^n\mathcal{E}$ has a second $\OO_X$-module structure coming from $\pr_2$, which we denote as an action on the right.
We have
\[ J^n \OO_X \otimes \mathcal{E} \cong J^n \mathcal{E} \]
where the tensor-product is formed w.r.t.\@ this second $\OO_X$-module structure. 
\end{PAR}

\begin{PAR}
There is an inclusion
\[ \mathcal{D}^{\le n}(\mathcal{E}, \mathcal{F})  \hookrightarrow \mathcal{HOM}_k ( \mathcal{E}, \mathcal{F}) \]
into the sheaf of $k$-linear (not $\OO_X$-linear) morphisms of sheaves. For an open subset $U \subset X$, a section $s \in H^0(U, \mathcal{E})$ here is considered to be a morphism
\[  \OO_U \rightarrow \mathcal{E}_U \]
and the composition
\[  \OO_U \rightarrow \pr_{1,*} \pr_1^* \OO_U = \pr_{1,*} \pr_2^*  \OO_U \rightarrow  \pr_{1,*} \pr_2^*  \mathcal{E}_U  = J^n \mathcal{E}_U \]
yields a section in $H^0(U, J^n \mathcal{E})$ and then, via application of an element of $H^0(U, \mathcal{HOM}( J^n \mathcal{E}, \mathcal{F}))$, a section in $H^0(U, \mathcal{F})$.
The second $\OO_X$-module structure on $J^n\mathcal{E}$ here dualizes  to pre-composition with a section of $\OO_X$. 
We write $\mathcal{D}_X^{\le n} := \mathcal{D}^{\le n}(\OO_X, \OO_X)$. The ring sheaf $\mathcal{D}_X := \colim_n \mathcal{D}_X^{\le n}$ is generated by $\OO_X$ and $\mathcal{T}_X$ with the only relations
coming from the Lie bracket of vector fields and differentiation of functions. 

Similarly to the case of jet bundles, we have
\[ \mathcal{D}^{\le n}(\mathcal{E}, \OO) = \mathcal{D}_X^{\le n} \otimes \mathcal{E}^*  \]
 where the tensor product is formed w.r.t.\@ the right-$\OO_X$-module structure. 
\end{PAR}

\begin{PAR}\label{PARGROUP}
In the special case $X = P$, where $P$ is an algebraic group, we have a natural isomorphism (compatible with the filtration by degree):
\[ \mathcal{D}_P = \colim_{ n } \mathcal{D}^{\le n}_P \cong \OO_P \otimes U(\Lie(P))   \]
where $U(\Lie(P))$ is the universal enveloping algebra of the Lie algebra $\Lie(P)$.
Elements of $\Lie(P)$ are considered to be vector fields using the action by left-translation. They are invariant under the action of $P$ on $P$ by 
right-translation. 
The isomorphism is hence $P$-equivariant under right-translation, where $P$ acts on the right hand side only on $\OO_P$. 
It is $P$-equivariant under left-translation if $G$ on the right hand side acts on $\OO_P$ by left-translation and via $\Ad$ on $\Lie(P)$. 
\end{PAR}

\begin{PAR}
The construction in \ref{PARGROUP} is a special case of the following. Let $P$ be an algebraic group and $X= Q \backslash P$, where $Q$ is a quasi-parabolic subgroup of $P$. 
These are the generalized flag varieties, denoted $M^\vee_Y$  in the
last section thus assuming here that they have a $k$-rational point $[Q]$ in the sequel. Denote by $\pi: P \rightarrow Q \backslash P$ the projection. 
\end{PAR}

\begin{PROP}\label{PROPJET}
Let $E$ be a $Q$-representation and 
\[ \mathcal{E} = Q \backslash (P \times E) \]
the corresponding $P$-equivariant vector bundle on $Q \backslash P$. Then we have
\[ \mathcal{D}(\mathcal{E}^*, \OO) \cong Q \backslash (P \times (U(\Lie(P)) \otimes_{U(\Lie(Q))} E))  \]
where $Q$ acts on $U(\Lie(P))$ via $\Ad$ and on $E$ via the given representation. This isomorphism is compatible with the 
filtration by degree. 
\end{PROP}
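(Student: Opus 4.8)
The plan is to reduce everything to the group $P$, where $\mathcal{D}_P = \OO_P \otimes U(\mathfrak{p})$ has already been computed, and then descend along the quotient map. The starting point is the formula
\[ \mathcal{D}^{\le n}(\mathcal{E}^*, \OO) = \mathcal{D}_X^{\le n} \otimes \mathcal{E}, \]
obtained from the displayed identity $\mathcal{D}^{\le n}(\mathcal{E},\OO) = \mathcal{D}_X^{\le n}\otimes\mathcal{E}^*$ (tensor over the right $\OO_X$-structure) by replacing $\mathcal{E}$ with $\mathcal{E}^*$ and using $\mathcal{E}^{**}\cong\mathcal{E}$. Thus the whole computation is controlled by the ring $\mathcal{D}_X$ on $X = M^\vee = Q\backslash P$, which I would treat via the quotient map $\pi\colon P \to Q\backslash P$, a torsor under the left translation action of $Q$.

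First I would trivialize the pullback. Since $\mathcal{E}$ is $P$-equivariant for the right action and $\pi$ is right $P$-equivariant, the orbit map gives a canonical isomorphism $\pi^*\mathcal{E}^* \cong \OO_P\otimes_k E^*$, and feeding this into $\mathcal{D}_P = \OO_P\otimes U(\mathfrak{p})$ yields
\[ \mathcal{D}_P(\pi^*\mathcal{E}^*,\OO_P) \cong \OO_P \otimes U(\mathfrak{p})\otimes_k E \]
as a left $\mathcal{D}_P$-module. I would then observe that the vertical directions of $\pi$ are exactly the fundamental vector fields of the left $Q$-action; in the presentation $\mathcal{D}_P = \OO_P\otimes U(\mathfrak{p})$, where $\mathfrak{p}$ consists of the fields generated by left translation, these vertical fields are precisely the constant subsheaf $\mathfrak{q}\subset\mathfrak{p}$. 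Differential operators on the base are the $Q$-invariant operators that do not differentiate in the vertical directions, encoded by the standard descent isomorphism
\[ \mathcal{D}_{M^\vee}(\mathcal{E}^*,\OO) \cong \pi_*\Big( \mathcal{D}_P(\pi^*\mathcal{E}^*,\OO_P)\big/ \mathcal{D}_P(\pi^*\mathcal{E}^*,\OO_P)\cdot\mathfrak{q} \Big)^Q. \]

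The main obstacle — and the only place where the statement is genuinely not formal — is tracking the $Q$-equivariant structure through the trivialization $\pi^*\mathcal{E}^*\cong\OO_P\otimes E^*$. This trivialization is not $Q$-equivariant for the naive diagonal structure: along the vertical ($Q$-orbit) directions the equivariant structure twists by the representation, so a vertical field $\eta\in\mathfrak{q}$ acts on a section of $\OO_P\otimes E^*$ by the derivation on $\OO_P$ together with the infinitesimal coaction on $E^*$. Consequently, passing to the quotient by $\mathcal{D}_P(\cdots)\cdot\mathfrak{q}$ does not merely kill $U(\mathfrak{p})\mathfrak{q}$ but identifies $u\eta\otimes v$ with $u\otimes(\eta\cdot v)$ for $\eta\in\mathfrak{q}$, $v\in E$, through the $\mathfrak{q}$-module structure of $E$ (this is exactly what distinguishes the correct answer $U(\mathfrak{p})\otimes_{U(\mathfrak{q})}E$ from the incorrect $(U(\mathfrak{p})\otimes_{U(\mathfrak{q})}k)\otimes_k E$). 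Carrying out this bookkeeping gives
\[ \mathcal{D}_P(\pi^*\mathcal{E}^*,\OO_P)\big/\mathcal{D}_P(\pi^*\mathcal{E}^*,\OO_P)\cdot\mathfrak{q} \;\cong\; \OO_P\otimes_k\big(U(\mathfrak{p})\otimes_{U(\mathfrak{q})}E\big). \]

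Finally I would record the residual $Q$-action: on $\OO_P$ it is left translation, while on the fiber $U(\mathfrak{p})\otimes_{U(\mathfrak{q})}E$ it is $\Ad$ on $U(\mathfrak{p})$ and the given representation on $E$ (compatible with the $U(\mathfrak{q})$-balancing since $\Ad(Q)$ preserves $\mathfrak{q}$). Taking $Q$-invariants of the pushforward then identifies the result with $a_*(U(\mathfrak{p})\otimes_{U(\mathfrak{q})}E) = Q\backslash(P\times U(\mathfrak{p})\otimes_{U(\mathfrak{q})}E)$, which is the claim. As an independent check one verifies the isomorphism on associated graded: since $\operatorname{gr}\mathcal{D}_{M^\vee}(\mathcal{E}^*,\OO) = \operatorname{Sym}(T_{M^\vee})\otimes\mathcal{E}$ has fiber $\operatorname{Sym}(\mathfrak{p}/\mathfrak{q})\otimes E$, and the Poincar\'e--Birkhoff--Witt theorem for the pair $(\mathfrak{p},\mathfrak{q})$ gives $\operatorname{gr}(U(\mathfrak{p})\otimes_{U(\mathfrak{q})}E) = \operatorname{Sym}(\mathfrak{p}/\mathfrak{q})\otimes E$, the filtered comparison map is an isomorphism.
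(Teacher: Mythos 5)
Your proof is correct and follows essentially the same route as the paper's: both reduce to $Q$-equivariant $E^*$-valued functions on $P$ and the identification $\mathcal{D}_P=\OO_P\otimes U(\mathfrak{p})$, letting $g(X\otimes v)$ act by $f\mapsto g\,X(v(f))$, with the $U(\mathfrak{q})$-balancing arising exactly as you describe because vertical derivatives of equivariant sections reduce to the fiberwise $\mathfrak{q}$-action. The paper leaves the bijectivity to a check ``in local coordinates,'' which your descent-plus-PBW comparison on the associated graded supplies explicitly.
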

\begin{proof}
Sections on $U \subset Q \backslash P$ of the bundle $Q \backslash (P \times (U(\Lie(P)) \otimes_{U(\Lie(Q))} E))$ can be considered as
$Q$-invariant sections on $\pi^{-1}U$ of the constant bundle $U(\Lie(P)) \otimes_{U(\Lie(Q))} E$ 
and similarly sections on $U$ in $\mathcal{E}^*$ are $Q$-invariant sections of the constant bundle $E^*$ on $\pi^{-1}U$. 
The action
\[ H^0(\pi^{-1}U, U(\Lie(P)) \otimes_{U(\Lie(Q))} E) \times H^0(\pi^{-1}U, E^*) \rightarrow H^0(\pi^{-1}U, E^*)  \] 
given by
\[ g (X \otimes v) \cdot f \mapsto g (X v(f)), \]
where $X$ acts as differential operator on the function $v(f) \in \OOO_P({\pi^{-1}U})$, is $Q$-invariant and therefore induces a morphism 
 \[ H^0(\pi^{-1}U, U(\Lie(P)) \otimes_{U(\Lie(Q))} E)^Q \rightarrow \mathcal{D}(\mathcal{E}^*, \OO)(U). \]
Using local coordinates one checks that it is an isomorphism. 
\end{proof}

\begin{DEF}
We define
\[ J^n E :=  (( U(\Lie(P)) \otimes_{U(\Lie(Q))} E^* )^{\le n})^*. \]
\end{DEF}

\begin{FOLGERUNG}[to Proposition~\ref{PROPJET}]
The $P_Y$-equivariant sheaf on $M^\vee_Y$ associated with the representation $J^n E$ is $J^n \mathcal{E}$.
\end{FOLGERUNG}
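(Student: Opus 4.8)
The plan is to deduce the statement directly from Proposition~\ref{PROPJET} by dualizing, together with the defining identity relating jet bundles and sheaves of differential operators. The subtlety to keep in mind is that the corollary concerns $J^n\mathcal{E}$, the jet bundle of the bundle $\mathcal{E}$ attached to the $P$-representation $E$, whereas Proposition~\ref{PROPJET} is phrased in terms of $\mathcal{D}(\mathcal{E}^*,\OO)$. I would therefore begin by feeding the proposition the representation $E^*$ in place of $E$.

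First I would apply Proposition~\ref{PROPJET} to the $P$-representation $E^*$, whose associated $P$-equivariant bundle is $\mathcal{E}^*$. Since $(\mathcal{E}^*)^* = \mathcal{E}$, this yields
\[ \mathcal{D}(\mathcal{E},\OO) = Q \backslash (P \times U(\mathfrak{p}) \otimes_{U(\mathfrak{q})} E^*). \]
Next I would observe that this isomorphism is compatible with the filtration by order of the differential operator on the left and the PBW filtration of $U(\mathfrak{p})$ on the right; this compatibility is built into the identification $\mathcal{D}_G = \OO_G \otimes U(\mathfrak{g})$ used in the proof of the proposition. Restricting to order $\le n$ then gives
\[ \mathcal{D}^{\le n}(\mathcal{E},\OO) = Q \backslash (P \times (U(\mathfrak{p}) \otimes_{U(\mathfrak{q})} E^*)^{\le n}). \]
By the definition $\mathcal{D}^{\le n}(\mathcal{E},\OO) = \mathcal{HOM}_{\OO_{M^\vee}}(J^n \mathcal{E}, \OO) = (J^n\mathcal{E})^*$, the left-hand side is exactly the dual bundle of $J^n\mathcal{E}$.

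It then remains to dualize. Using that the equivalence $a_*$ between $Q$-representations and $P$-equivariant bundles is a tensor equivalence, and hence commutes with passage to duals, I would dualize the previous displayed identity to obtain
\[ J^n\mathcal{E} = Q \backslash \big(P \times ((U(\mathfrak{p}) \otimes_{U(\mathfrak{q})} E^*)^{\le n})^*\big), \]
and the $Q$-representation on the right is precisely $J^n E$ by its definition. This identifies $J^n\mathcal{E}$ with the $P_Y$-equivariant sheaf associated with $J^n E$, as claimed.

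The computation is essentially formal, and I expect the only genuinely delicate point to be the bookkeeping of the two dualizations --- passing from $E$ to $E^*$ before invoking the proposition, and dualizing the resulting bundle afterwards --- together with the verification that the order filtration on differential operators matches the degree filtration on $U(\mathfrak{p})$. Both become routine once the identity $\mathcal{D}^{\le n}(\mathcal{E},\OO) = \mathcal{D}^{\le n}_{M^\vee} \otimes \mathcal{E}^*$ is read in the filtered sense, and no new geometric input beyond Proposition~\ref{PROPJET} is needed.
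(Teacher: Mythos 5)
Your argument is correct and is exactly the intended deduction: the paper leaves this corollary without proof because it follows from Proposition~\ref{PROPJET} by substituting $E^*$ for $E$, truncating by the degree filtration (whose compatibility with the order filtration the paper notes in the identification $\mathcal{D}_G = \OO_G \otimes U(\mathfrak{g})$), and dualizing via $\mathcal{D}^{\le n}(\mathcal{E},\OO) = (J^n\mathcal{E})^*$ and the definition of $J^nE$. Your bookkeeping of the two dualizations is accurate and no further input is needed.
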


\begin{PAR}There is a logarithmic version of the sheaves of differential operators defined in the last section. Let $X=\overline{M}$ be
a smooth $k$-variety equipped with a divisor with normal crossings. 
We define 
\[ \mathcal{D}^{\le n}(\OO_X, \OO_X)(\log) \subset \mathcal{D}^{\le n}(\OO_X, \OO_X) \]
as the subsheaf of differential operators generated by $\OO_X$ and the vector fields in $\mathcal{T}_X(\log)$, and  define $\mathcal{D}^{\le n}(\mathcal{E}, \mathcal{F})(\log)$ similarly. We set
\[ J^n_{\log} \mathcal{E} := \mathcal{D}^{\le n}(\mathcal{E}, \mathcal{\OO}_X)(\log)^\vee. \]
\end{PAR}

The following theorem was shown in \cite{Harris86} for the case of Shimura varieties.

\begin{SATZ}\label{THEOREMAUTOVECTDLOG}
Let $\overline{M}$ be a toroidal compactification equipped with automorphic data with logarithmic connection satisfying the axioms $(F, T)$. 
Let $V$ be a representation of $Q_M$, and $\mathcal{V}:= \Xi^* \widetilde{V}$ the corresponding automorphic vector bundle on $\overline{M}$ (cf.\@ \ref{CLASSAUTOVB}).
Then the automorphic vector bundle associated with $J^n V$ is precisely $J_{\log}^n \mathcal{V}$. 
\end{SATZ}
\begin{proof}Let $\widetilde{V}$ denote the bundle $Q \backslash (P \times V)$ on $Q \backslash P$. 
It suffices to show, dually, that the automorphic vector bundle associated with the $P$-equivariant vector bundle $\mathcal{D}^{\le n}(\widetilde{V}^*, \OO)$  on $Q \backslash P$ is $\mathcal{D}^{\le n}(\log)(\mathcal{V}^*, \OO)$.

Let $Y$ be a stratum. For the proof it suffices to take $Y=M$, however, we will need the more refined discussion later. 
There are  $P_Y$-equivariant homomorphisms of ring sheaves (which respect the filtrations by degree), cf.\@ (\ref{AXIOMSF}--\ref{AXIOMST}):
\begin{eqnarray*} 
\mu: \pi^{-1} \mathcal{D}_{M_Y}(\log)& \rightarrow& \mathcal{D}_{B_Y}(\log)  \\
\nu: p^{-1} \mathcal{D}_{M^\vee_Y}& \rightarrow &\mathcal{D}_{B_Y}(\log)   
\end{eqnarray*}
given by the {\em flat} connection $s_Y$ (and the Torelli axiom). 
They are compatible with the left- and right-module structures under $\pi^{-1} \OO_{M_Y}$, resp.\@ $p^{-1} \OO_{M^\vee_Y}$.
Furthermore, we have
\[ \OO_{B_Y} \cdot \nu(p^{-1} \mathcal{D}_{M^\vee_Y}^{\le n}) = \mathcal{D}_{B_Y}^{\mathrm{horz}} = \OO_{B_Y} \cdot \mu(\pi^{-1}  \mathcal{D}_{M_Y}^{\le n}(\log)), \]
where $\mathcal{D}_{B_Y}^{\mathrm{horz}}$ is the sub-ring sheaf of $\mathcal{D}_{B_Y}(\log)$ generated by $\OO_{B_Y}$ and $\mathcal{T}_{B_Y}^{\mathrm{horz}}$. 

 The bundle $\mathcal{D}^{\le n}(\widetilde{V}, \OO)$ on $M^\vee_Y$ is isomorphic to 
\[ \mathcal{D}_{M^\vee_Y}^{\le n} \otimes_{\OO_{M^\vee_Y}} \widetilde{V}^* \]
where the tensor product has been formed w.r.t.\@ the $\OO_{M^\vee_Y}$-right-module structure on $\mathcal{D}_{M^\vee_Y}^{\le n}$.

Furthermore, we have a  $P_Y$-equivariant isomorphism:
\[ p^*( \mathcal{D}_{M_Y^\vee}^{\le n} \otimes_{\OO_{M_Y^\vee}} \widetilde{V}) \cong (\OO_{B_Y} \cdot \mu(\pi^{-1} \mathcal{D}_{M_Y}^{\le n}(\log))) \widehat{\otimes}_{\OO_{B_Y}} p^* \widetilde{V}   \]
(Lemma \ref{LEMMATECH} below).
Now, $P_Y$ acts on $\OO_{B_Y} \cdot \mu(\pi^{-1} \mathcal{D}_{M_Y}^{\le n}(\log))$ exclusively on the first factor, i.e.\@ 
\[ (\OO_{B_Y} \cdot \mu(\pi^{-1}\mathcal{D}_{M_Y}^{\le n}(\log)))^{P_Y} \cong \mathcal{D}_{M_Y}^{\le n}(\log) \]
using the identification of $P_Y$-invariant sections of a $P_Y$-bundle on $B_Y$ with the sections of a vector bundle on $M_Y$.
Conclusion:
\[ (p^*( \mathcal{D}_{M_Y^\vee}^{\le n} \otimes_{\OO_{M^\vee_Y}} \widetilde{V}))^{P_Y} \cong \mathcal{D}_{M_Y}^{\le n}(\log) \widehat{\otimes}_{\OO_{M_Y}} (p^* \widetilde{V})^{P_Y}.   \]
\end{proof}

\begin{LEMMA}\label{LEMMATECH}
The subsheaf $\OO_{B_Y} \cdot \nu(p^{-1}\mathcal{D}_{M^\vee_Y}^{\le n})$ of $\mathcal{D}_{B_Y}(\log)$ is also a right-$\OO_{B_Y}$-submodule sheaf, and we have:
\[ p^*( \mathcal{D}_{M^\vee_Y}^{\le n} \otimes_{\OO_{M^\vee_Y}} \widetilde{V}) \cong (\OO_{B_Y} \cdot \nu(p^{-1}\mathcal{D}_{M^\vee_Y}^{\le n})) \widehat{\otimes}_{\OO_{B_Y}} p^* \widetilde{V}   \]
where the tensor product in both cases is formed w.r.t.\@ the right-module structure. 
\end{LEMMA}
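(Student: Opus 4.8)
The plan is to deduce both assertions from the single structural fact established just above, namely that $\OO_{B_Y}\cdot\nu(\mathcal{D}^{\le n}_{M^\vee_Y})$ equals the degree $\le n$ part $\mathcal{D}^{\mathrm{horz},\le n}_{B_Y}$ of the subring $\mathcal{D}^{\mathrm{horz}}_{B_Y}$ of $\mathcal{D}_{B_Y}(\log)$ generated by $\OO_{B_Y}$ and $\mathcal{T}^{\mathrm{horz}}_{B_Y}$. Throughout, the only genuinely delicate point is the bookkeeping of the two $\OO$-module structures on sheaves of differential operators; I will consistently write $\cdot$ for the composition product in $\mathcal{D}_{B_Y}(\log)$ and keep in mind that the right $\OO_{B_Y}$-action is $D\mapsto D\cdot m_f$, composition with the order-zero operator ``multiplication by $f$''.

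For the first assertion I would argue purely ring-theoretically. Since $\nu$ is a homomorphism of ring sheaves with $\nu(1)=1$, the subsheaf $\mathcal{D}^{\mathrm{horz}}_{B_Y}$ contains $\OO_{B_Y}$ as its order-zero part and is closed under composition. Given $D\in\mathcal{D}^{\mathrm{horz},\le n}_{B_Y}$ and $f\in\OO_{B_Y}$, the right action $D\cdot f$ is by definition the composition of $D$ with the order-zero operator $m_f\in\OO_{B_Y}\subseteq\mathcal{D}^{\mathrm{horz}}_{B_Y}$; being a product of two elements of the subring, it again lies in $\mathcal{D}^{\mathrm{horz}}_{B_Y}$, and composing with an order-zero operator does not raise the order. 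Hence $\mathcal{D}^{\mathrm{horz},\le n}_{B_Y}$ is a right $\OO_{B_Y}$-submodule.

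For the isomorphism I would first produce a map out of the pullback. The connection $\nu$ is left $\OO$-linear in the sense that $\nu(f'D)=(p^*f')\cdot\nu(D)$ for $f'\in\OO_{M^\vee_Y}$, so the rule $f\otimes D\mapsto f\cdot\nu(D)$ defines a morphism
\[ \phi:\ p^*\mathcal{D}^{\le n}_{M^\vee_Y}=\OO_{B_Y}\otimes_{p^{-1}\OO_{M^\vee_Y}}p^{-1}\mathcal{D}^{\le n}_{M^\vee_Y}\ \longrightarrow\ \mathcal{D}^{\mathrm{horz},\le n}_{B_Y}, \]
surjective precisely by the relation recalled above. To see it is an isomorphism I would pass to associated graded sheaves for the order filtration: on symbols $\phi$ becomes the canonical map $p^*\mathrm{Sym}^{\le n}\mathcal{T}_{M^\vee_Y}\to\mathrm{Sym}^{\le n}\mathcal{T}^{\mathrm{horz}}_{B_Y}$, which is an isomorphism because the flat connection together with Torelli identifies $\mathcal{T}^{\mathrm{horz}}_{B_Y}\cong p^*\mathcal{T}_{M^\vee_Y}$ (the horizontal bundle is the image of $s_Y$, hence isomorphic to $\pi^*\mathcal{T}_{M_Y}(\log)$, which (T) matches with $p^*\mathcal{T}_{M^\vee_Y}$). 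As both sides are filtered with locally free graded pieces and $\phi$ is an isomorphism on every graded piece, $\phi$ is an isomorphism of left $\OO_{B_Y}$-modules. Moreover, multiplicativity of $\nu$ gives $\nu(D\cdot f')=\nu(D)\cdot(p^*f')$, so $\phi$ intertwines the right $p^{-1}\OO_{M^\vee_Y}$-action inherited from $\mathcal{D}^{\le n}_{M^\vee_Y}$ with the right $\OO_{B_Y}$-action on $\mathcal{D}^{\mathrm{horz},\le n}_{B_Y}$ restricted along $p^*$.

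Finally I would tensor $\phi$ on the right with $\widetilde V$. Using associativity of the tensor product and $p^*\widetilde V=\OO_{B_Y}\otimes_{p^{-1}\OO_{M^\vee_Y}}p^{-1}\widetilde V$, both sides of the claimed isomorphism unwind to $\bullet\otimes_{p^{-1}\OO_{M^\vee_Y}}p^{-1}\widetilde V$, with $\bullet=p^*\mathcal{D}^{\le n}_{M^\vee_Y}$ on the left and $\bullet=\mathcal{D}^{\mathrm{horz},\le n}_{B_Y}$ on the right; applying $\phi\otimes\mathrm{id}$ and the compatibility of right structures just established yields the result. I expect the main obstacle to be exactly this disciplined tracking of the bimodule structures — making sure the $\otimes_{\OO_{M^\vee_Y}}\widetilde V$ on the source and the $\otimes_{\OO_{B_Y}}p^*\widetilde V$ on the target are both taken over the correct (right) factor and matched by $\phi$ — together with the verification that $\phi$ is an isomorphism, for which the symbol computation and the Torelli identification of $\mathcal{T}^{\mathrm{horz}}_{B_Y}$ with $p^*\mathcal{T}_{M^\vee_Y}$ are the essential input.
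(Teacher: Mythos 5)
Your argument is correct and follows essentially the same route as the paper, whose entire proof is that the claim ``follows by induction on the degree from the fact that $\nu$ is compatible with the right-$p^{-1}\OO_{M^\vee_Y}$-module structure'': your passage to the associated graded of the order filtration, with the Torelli identification $\mathcal{T}^{\mathrm{horz}}_{B_Y}\cong p^*\mathcal{T}_{M^\vee_Y}$ supplying the isomorphism on symbols, is precisely that induction spelled out. The extra care you take with the two $\OO$-module structures and with the identification $\OO_{B_Y}\cdot\nu(\mathcal{D}^{\le n}_{M^\vee_Y})=\mathcal{D}^{\mathrm{horz},\le n}_{B_Y}$ (which the paper asserts in the proof of Theorem~\ref{THEOREMAUTOVECTDLOG}) only makes explicit what the paper leaves implicit.
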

\begin{proof}This follows by induction on the degree from the fact that $\nu$ is compatible with the right-$p^{-1} \OO_{M^\vee_Y }$-module structure. 
\end{proof}

\subsection{Fourier-Jacobi categories}\label{FJCAT}

\begin{DEF}\label{DEFFJ}
Let $\overline{M}$ be a toroidal compactification equipped with automorphic data. We define the Fourier-Jacobi category $\cat{$\overline{M}$-FJ}$ of $\overline{M}$. 
The objects are collections of functors
\[ F_Y: \Z^{n_Y} \rightarrow \cat{$\left[ M^\vee_Y /P_Y\right]$-qcoh} \]
for each stratum $Y$, and natural transformations $\mu_{ZY}$ for each pair $Y\le Z$ of strata, satisfying the following conditions:
\begin{enumerate}
\item For each $j$ there is an $N \in \Z$ such that for all $v$ with $v_j \ge N$ the objects
\[ F_Y(v)   \]
do not depend on $v_j$ and for all $v\le v'$ with $v_j, v_j' \ge N$ the morphisms 
\[ F_Y(v \rightarrow v') \]
do not depend on $v_j$ and $v_j'$ and are identities if $v_i=v_i'$ for all $i \not=j$. In other words, the $F_Y$ are isomorphic to a left Kan extension of a functor
$\Z_{\le N}^{n_Y} \rightarrow  \cat{$\left[ M^\vee_Y /P_Y\right]$-qcoh}$\footnote{This would rather only say that the $F_Y$ become constant {\em up to isomorphism}, but there is no harm in requiring that they are {\em actually} constant.}. 

We denote the respective constant value by $\lim_{\lambda \rightarrow \infty} F_Y(v + \lambda e_j)$.
Note that also expressions like
$\lim_{\lambda_1, \lambda_2 \rightarrow \infty} F_Y(v + \lambda_1 e_j + \lambda_2 e_k)$ etc.\@ make sense. 
\item
For all $Z \le Y$ with corresponding map $\beta_{ZY}: [n_Y] \hookrightarrow [n_Z]$ and morphism $\alpha_{ZY}: P_Z \rightarrow P_Y$ there are isomorphisms
\[ \mu_{ZY}(v): \alpha_{ZY}^* F_Y(v) \iso \lim_{\lambda_{k_1}, \dots, \lambda_{k_l} \rightarrow \infty} F_Z(\beta_{ZY}(v) + \lambda_{k_1} e_{k_1}+\cdots+\lambda_{k_l} e_{k_l})  \]
for all $v \in \Z^{n_Y}$. 
Here $\{k_1, \dots, k_l\}$ is the complement of $\mathrm{im}(\beta_{ZY})$. These isomorphisms are supposed to be natural transformations of functors in $v$ and to be functorial w.r.t.\@ three strata $W \le Z \le Y$. 
\end{enumerate}
The morphisms in the category $\cat{$\overline{M}$-FJ}$ are collections of morphisms of functors $\{F_Y \rightarrow F_{Y}'\}_Y$ for all strata which are compatible with the isomorphisms $\mu_{ZY}(v)$. 

In the same way, we define categories $\cat{$\overline{Y}$-FJ}$, where the objects only consist of functors $F_Z$ for $Z \le Y$. 
We also define $\cat{$Y$-FJ}$, whose objects are just functors $F_Y$ satisfying property 1. 
All Fourier-Jacobi categories are Abelian categories.  
\end{DEF}

\begin{DEF}
We define the following full subcategories of the Fourier-Jacobi categories:
\begin{enumerate}
\item $\cat{$\overline{M}$-FJ-$\ge$}$: We ask in addition that for each stratum $Y$ there is an $N \in \Z$ such that
\[ F_Y(v) = 0  \]
if {\em some} $v_j < N$. Such elements shall be called {\bf bounded below}. It means that $F_Y$ is actually a left Kan extension from a functor
$\Delta_n^{n_Y} \rightarrow \cat{$\left[M^\vee_Y / P_Y \right]$-qcoh}$ for some $n \in \N$, where $\Delta_n$ is considered as an interval $[N, N+n] \subset \Z$. 
\item $\cat{$\overline{M}$-FJ-coh}$: As before but with the additional condition that $F_Y(v)$ is finite dimensional for all $Y$ and $v$. 
Such elements shall be called {\bf coherent}. 
\item $\cat{$\overline{M}$-FJ-$\ge{}N$}$, $\cat{$\overline{M}$-FJ-$\ge{}N$-coh}$: As before but with fixed $N$. 
\item $\cat{$\overline{M}$-FJ-tf}$: All bounded-below objects, such that in addition for all $v \le w$ the morphism $F_Y(v) \rightarrow F_Y(w)$ is 
a monomorphism. Such elements shall be called {\bf torsion-free}. 
\item $\cat{$\overline{M}$-FJ-lf}$: All torsions-free objects, such that for any $Y$ and any diagram in $\Z^{n_Y}$ of the form
\[ \xymatrix{
v \ar[r] \ar[d] & v+e_i \ar[d] \\
v+e_j \ar[r] & v+e_i+e_j
} \]
the corresponding diagram
\[ \xymatrix{
F_Y(v) \ar[r] \ar[d] & F_Y(v+e_i) \ar[d] \\
F_Y(v+e_j) \ar[r] & F_Y(v+e_i+e_j)
} \]
is Cartesian. Such elements shall be called {\bf locally free}. 
\item $\cat{$\overline{M}$-FJ-lf-coh}$: All locally free and coherent objects. 
\end{enumerate} 
\end{DEF}

\begin{PAR}\label{DEFXI}
Obviously the definition of Fourier-Jacobi category mimics the situation for vector bundles on toroidal compactifications and we now proceed to define an exact functor
\[ \Xi^*: \cat{$\overline{M}$-FJ-coh} \rightarrow \cat{$\overline{M}$-tcoh} \]

as follows: 
For each $F_Y(v) \in \cat{$P_Y$-Vect on $M^\vee_Y$}$ we form $p^*(F_Y(v))^{P_Y}|_{\overline{Y}}$ which is a vector bundle on $\overline{Y}$. 
It carries an action of $\M_m^{n_Z-n_Y}$ on 
\[ C_{\overline{Z}}(p_Y^*(F_Y(v))^{P_Y}|_{\overline{Y}}) \cong (p_Z^*(\alpha_{ZY}^*F_Y(v))^{P_Z})|_{\overline{Y}} \]
which is a {\em canonical extension} (cf.\@ \ref{CANEXT}). 

The so defined functors
\[ F_Y': \Z^{n_Y} \rightarrow \cat{$\overline{Y}$-tcoh-can}  \]
(where $\overline{Y}$ is equipped with its structure as restricted toroidal compactification)
together with the maps induced by the $\mu_{ZY}$ satisfy the requirements of Lemma~\ref{LEMMAGLUE}. Hence we get a
coherent sheaf $\Xi^*(\{F_Y\})$ on $\overline{M}$ which carries a $\G_m^{n_Y}$ action on $C_{\overline{Y}}(\Xi^*(\{F_Y\}))$. 

We call the sheaves in the image of $\Xi^*$  {\bf generalized automorphic sheaves}. 
\end{PAR}

\begin{BEISPIEL}\label{EXCANEXT}
The easiest case is 
\[ \Xi^*V := (p_M^* V)^{P_M} \]
where $V$ is a bundle on $\cat{$M$-FJ-coh} = \cat{$\left[M^\vee / P_M\right]$-coh}$. 
It is a vector bundle which is a canonical extension itself and can be described by the collection of functors
\[ F_Y: v \mapsto \begin{cases} \alpha_{YM}^* V & \text{for } v \in \Z^{n_Y}_{\ge 0} \\ 0 & \text{otherwise}. \end{cases} \]
Sheaves of this form are locally free and are called  {\bf  automorphic vector bundles}. 
\end{BEISPIEL}

\begin{BEM}
The Fourier-Jacobi categories are related to the classical Fourier-Jacobi expansions as follows. For each $F \in \cat{$\overline{M}$-FJ}$ and stratum $Y$ there is a morphism {\bf Fourier-Jacobi expansion}:
\[ H^0(\overline{M}, \Xi^* F) \rightarrow \prod_{v \in \Z^{n_Y}} H^0(\overline{M}, \Xi^* F_v), \]
where $F_v$ is the following element of $F \in \cat{$\overline{M}$-FJ}$. On $Y$ it is defined by
\[ F_{v,Y}(w) = \begin{cases} F_Y(v) & \text{ for $w=v$,} \\ 0 & \text{otherwise} \end{cases} \]
and is a similar restriction of $F$ on strata $Z \le Y$ and 0 on all other. Note that $\Xi^* F_v$ has support on $\overline{Y}$. 
\end{BEM}

\begin{DEF}\label{TENSOR2}
For the category $\cat{$\overline{M}$-FJ-tf-coh}$ we define a tensor product mimicing the tensor product of \ref{TENSOR1}.
Let $F$ and $G$ be objects of $\cat{$\overline{M}$-FJ-tf-coh}$. We define
\[ (F \otimes G)_Y: v \mapsto \sum_{v_1+v_2 = v} F_Y(v_{1}) \otimes G_Y(v_{2}) \]
where the sum is formed in $(\lim_{v \rightarrow \infty} F_Y(v)) \otimes (\lim_{v \rightarrow \infty} G_Y(v))$. 
\end{DEF}

\begin{LEMMA} The exact functor (cf.\@ \ref{DEFXI})
\[ \Xi^*: \cat{$\overline{M}$-FJ-coh} \rightarrow \cat{$\overline{M}$-tcoh} \]
preserves the tensor product when restricted to $\cat{$\overline{M}$-FJ-tf-coh}$.
\end{LEMMA}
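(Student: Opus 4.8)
The plan is to verify that $\Xi^*$ commutes with the tensor product stratum-by-stratum, reducing to a computation on each completion $C_{\overline{Y}}(\overline{M})$ and ultimately to the explicit formula for the tensor product of torsion-free toroidal sheaves from \ref{TENSOR1}. Since both sides are constructed via the Glueing lemma \ref{LEMMAGLUE} out of the functors on the individual strata, and since that lemma determines the glued sheaf uniquely up to unique isomorphism, it suffices to exhibit a compatible family of isomorphisms on each stratum $Y$; by the open stratum identification it is enough to match the associated functors on $\overline{Y}$.

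First I would fix a stratum $Y$ and unwind the definitions. On the one hand, the associated functor of $\Xi^*(F \otimes G)$ sends $v$ to $p^*((F \otimes G)_Y(v))^{P_Y}|_{\overline{Y}}$, where by Definition~\ref{TENSOR2}
\[ (F \otimes G)_Y(v) = \sum_{v_1 + v_2 = v} F_Y(v_1) \otimes G_Y(v_2), \]
the sum taken inside $(\lim F_Y) \otimes (\lim G_Y)$. On the other hand, the tensor product $\Xi^*F \otimes \Xi^*G$ in $\cat{$\OO_{\overline{M}}$-TCoh}$ is computed on $C_{\overline{Y}}(\overline{M})$ by the torsion-free formula of \ref{TENSOR1}, namely
\[ (\Xi^*F \otimes \Xi^*G)(v) = \sum_{v_1 + v_2 = v} \Xi^*F(v_1) \otimes \Xi^*G(v_2), \]
the sum now formed inside $(\lim \Xi^*F) \otimes (\lim \Xi^*G)$. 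Comparing these two presentations, the key point is that the functor $V \mapsto p^*(V)^{P_Y}|_{\overline{Y}}$ (from \ref{DEFXI}) is a tensor functor: for $P_Y$-equivariant bundles $V, W$ on $M_Y^\vee$ one has a natural isomorphism $p^*(V \otimes W)^{P_Y} \cong p^*(V)^{P_Y} \otimes p^*(W)^{P_Y}$, since $p^*$ is a pullback (hence monoidal) and taking $P_Y$-invariants of a product of $P_Y$-bundles on the torsor $B_Y$ is compatible with tensoring on $M_Y$. This identifies the two summands $\Xi^*F(v_1) \otimes \Xi^*G(v_2)$ and $p^*(F_Y(v_1) \otimes G_Y(v_2))^{P_Y}|_{\overline{Y}}$ term by term.

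The step I expect to be the main obstacle is checking that these term-by-term isomorphisms assemble correctly into an isomorphism of the \emph{sums}, and in particular that the ambient objects in which the two sums are formed are matched. Concretely, one must verify that $\Xi^*$ is compatible with the passage to the limit, i.e.\@ that
\[ \lim \Xi^*F \cong p^*(\lim F_Y)^{P_Y}|_{\overline{Y}} = \Xi^*(\lim F_Y), \]
so that the ambient tensor product $(\lim \Xi^*F) \otimes (\lim \Xi^*G)$ is identified with the $\Xi^*$-image of $(\lim F_Y) \otimes (\lim G_Y)$, and that the subsheaf generated by the partial sums on the FJ side maps isomorphically onto the corresponding subsheaf on the toroidal side. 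Here the torsion-freeness hypothesis is essential: it guarantees (via the Lemma following Definition~\ref{DEFTCOH}, and the torsion-free condition defining $\cat{$\overline{M}$-FJ-tf}$) that all the transition maps are monomorphisms and that the sums inside the limit objects are genuine subobjects, so that the two constructions land in the same ambient sheaf and the identification of summands upgrades to an identification of the sums.

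Finally I would check compatibility of these isomorphisms with the glueing data $\mu_{ZY}$ for pairs $Z \le Y$. This amounts to observing that the tensor-product formulas and the limit operations $\lim_{\beta_{ZY}}$ commute with each other and with $\alpha_{ZY}^*$, which follows formally because $\alpha_{ZY}^*$ is monoidal and the limits in Definition~\ref{TENSOR2} and \ref{CONSTTERM} are computed coordinatewise. By the uniqueness clause in Lemma~\ref{LEMMAGLUE}, the compatible family of stratum-wise isomorphisms then descends to the desired isomorphism $\Xi^*(F \otimes G) \cong \Xi^*F \otimes \Xi^*G$ of sheaves on $\overline{M}$.
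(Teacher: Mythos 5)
Your argument is correct, and in fact the paper gives no proof of this lemma at all, so there is nothing to compare it against except the intended reading of the construction. Your route --- reduce via the uniqueness clause of Lemma~\ref{LEMMAGLUE} to a stratum-wise comparison of associated functors, use that $V \mapsto p^*(V)^{P_Y}|_{\overline{Y}}$ is an exact tensor functor (pullback along $p$ followed by descent along the $P_Y$-torsor $\pi$), and then check that this functor carries the sum-inside-the-limit of Definition~\ref{TENSOR2} to the sum-inside-the-limit of \ref{TENSOR1} --- is exactly the argument the definitions are set up to make work, and your identification of the delicate point (matching the ambient limit objects and the images of the partial sums, which is where torsion-freeness and the local freeness of the $F_Y(v)$ enter) is the right one.
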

\begin{proof}It suffices to see this on the open parts $M_Y|_Y$ of the $M_Y$. The verification is left to the reader.\end{proof}

\begin{PAR}\label{EXTFUNCTORS}
For each pair $(Y, v)$ where $Y$ is a stratum and $v \in \Z^{n_Y}$ there exist restriction functors:
\[
\begin{array}{rrcl} 
(v)_Y^*: & $\cat{$\overline{M}$-FJ-$\ge{}N$-coh}$ &\rightarrow&  \cat{$\left[M^\vee_Y/P_Y\right]$-coh}   \\
(v)_Y^*: & $\cat{$\overline{M}$-FJ}$ &\rightarrow&  \cat{$\left[M^\vee_Y/P_Y\right]$-qcoh}   \\
(v)_Y^*: & $\cat{$\overline{M}$-FJ-$\ge{}N$}$ &\rightarrow&   \cat{$\left[M^\vee_Y/P_Y\right]$-qcoh}   
\end{array}
\]
given by $F \mapsto F_Y(v)$.
Those are exact and have each an {\em exact right-adjoint} $(v)_{Y,*}$ which is given as follows. The functor $((v)_{Y,*}V)_Y$ is given by the right Kan-extension $v_*$, where $v: \{\cdot\} \hookrightarrow \Z^{n_Y}$, resp.\@  $v: \{\cdot\} \hookrightarrow \Z^{n_Y}_{\ge N}$ also denotes the inclusion of $v$. In other words, we have
\[ ((v)_{Y,*}V)_Y(w) = \begin{cases}
V & \text{if $w \le v$ (and $w_i \ge N$ for all $i$ in the $\ge N$-cases)} \\
0 & \text{otherwise.}
\end{cases} \]
Note that $v \le w$ means that $v_i \le w_i$ for all $i$. 
For any stratum $Z \le Y$ we define
\[ ((v)_{Y,*}V)_Z(v) := \alpha^*_{ZY} ((v)_{Y,*}V)_Y(\pr(v))  \]
where $\pr: \Z^{n_Z} \rightarrow \Z^{n_Y}$ is the projection induced by $\beta_{ZY}$. In the bounded case it is set identically zero if $v_i < N$ for some $i$. 
For all other strata $Z$ the functor $((v)_{Y,*}V)_Z$ is set identically zero. The so defined object $(v)_{Y,*}V$ together with the obvious isomorphisms satisfies conditions 1.\@ and 2.\@ of the definition of the Fourier-Jacobi category (Definition~\ref{DEFFJ}). 
\end{PAR}

\begin{PAR}
For each stratum $Y$ and each $N \in \Z$, there are exact restriction functors
\[ \iota_N^*: \cat{$\overline{Y}$-FJ-coh} \rightarrow \cat{$\overline{Y}$-FJ-$\ge{}N$-coh}\]
which have an {\em exact left-adjoint}
\[ \iota_{N,!}:  \cat{$\overline{Y}$-FJ-$\ge{}N$-coh} \hookrightarrow \cat{$\overline{Y}$-FJ-coh} \]
which is given by the natural inclusion (or, in other words, by extension by zero or left Kan extension for the individual $F_Z$).
\end{PAR}

\begin{KOR}\label{KOREMBEDDINGD}
For each stratum $Y$, integer $N$, and $v \in \Z^{n_Y}_{\ge N}$, there are {\em fully-faithful} functors of categories
\[ (v)_{Y,*}: D^\medstar(\cat{$\left[M^\vee_Y/P_Y\right]$-coh}) \hookrightarrow D^\medstar( \cat{$\overline{M}$-FJ-$\ge{}N$-coh})  \]
and
\[ \iota_{N,!}: D^\medstar(\cat{$\overline{M}$-FJ-$\ge{}N$-coh}) \hookrightarrow D^\medstar( \cat{$\overline{M}$-FJ-coh})  \]
for $\medstar\in\{b,+,-,\emptyset\}$. 
\end{KOR}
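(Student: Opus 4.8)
The statement asserts that two functors at the level of derived categories are fully faithful. The key structural fact we are handed is that each of these functors is the adjoint (right or left) of an exact functor, and moreover that the adjoint is \emph{itself} exact. The plan is to reduce the derived-category statement to the abelian-category statement by exploiting exactness on both sides of the adjunction.

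\textbf{Step 1: Reduce fully faithfulness to a unit/counit isomorphism.}
For a functor between triangulated categories admitting an adjoint, fully faithfulness is equivalent to one of the unit or counit being an isomorphism. Concretely, for $(v)_{Y,*}$ being a right adjoint to the exact functor $(v)_Y^*$, I would check that the unit $\id \to (v)_Y^* \circ (v)_{Y,*}$ is an isomorphism; dually for $\iota_{N,!}$ (a left adjoint to $\iota_N^*$) I would check that the counit $\iota_N^* \circ \iota_{N,!} \to \id$ is an isomorphism. In both cases the claim at the abelian level is immediate from the explicit formulas: in \ref{EXTFUNCTORS} we read off $((v)_{Y,*}V)_Y(v) = V$, so $(v)_Y^* (v)_{Y,*} V = V$ with the unit the identity; and $\iota_{N,!}$ is by construction a full inclusion (extension by zero), so $\iota_N^* \iota_{N,!} = \id$ on the nose.

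\textbf{Step 2: Upgrade from abelian categories to derived categories via exactness.}
The essential point is that both $(v)_Y^*$ and its adjoint $(v)_{Y,*}$ are \emph{exact} (stated in \ref{EXTFUNCTORS}), and likewise $\iota_N^*$ and its left adjoint $\iota_{N,!}$ are exact. An exact functor of abelian categories descends to a triangulated functor on derived categories without any need to take derived functors, and an adjunction between exact functors induces an adjunction on the derived categories. Therefore the unit and counit on $D^\medstar$ are computed objectwise (degreewise) from the unit and counit on the underlying abelian categories. Since the latter are isomorphisms by Step 1, so are the former, and the derived functors are fully faithful. The boundedness conditions $\medstar \in \{b, +, -, \emptyset\}$ are preserved because exact functors send quasi-isomorphisms to quasi-isomorphisms and respect the various boundedness constraints on complexes.

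\textbf{Main obstacle.}
The genuine content is not the formal adjunction argument but verifying that the claimed adjoints really are exact as functors of abelian categories — this is where the special combinatorial structure of the Fourier-Jacobi categories enters. The right Kan extension defining $(v)_{Y,*}$ is exact only because the index poset $\Z^{n_Y}$ (or $\Z^{n_Y}_{\ge N}$) together with the ``step to the right'' along a single coordinate has the right shape: the formula $((v)_{Y,*}V)_Y(w) = V$ for $w \le v$ and $0$ otherwise is a characteristic-function extension whose exactness in $V$ is visible, but one must also confirm that the cross-stratum prescription $((v)_{Y,*}V)_Z(v) = \alpha_{ZY}^*((v)_{Y,*}V)_Y(\pr(v))$ together with the pullback $\alpha_{ZY}^*$ preserves exactness and that the resulting object genuinely satisfies conditions 1 and 2 of Definition~\ref{DEFFJ} (this last being asserted at the end of \ref{EXTFUNCTORS}). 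Granting those facts, which are bookkeeping rather than deep, the fully faithfulness is a formal consequence of exact adjunctions descending to derived categories.
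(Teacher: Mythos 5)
Your proposal is correct and follows essentially the same route as the paper: reduce fully faithfulness to the unit/counit being an isomorphism at the abelian level (immediate from the explicit formulas in \ref{EXTFUNCTORS}), then use exactness of all four functors to descend the adjunctions, and hence those isomorphisms, to the derived categories. The only quibble is that you have swapped the names ``unit'' and ``counit'' in both cases --- for the right adjoint $(v)_{Y,*}$ it is the counit $(v)_Y^*\circ(v)_{Y,*}\to\id$ that must be an isomorphism, and for the left adjoint $\iota_{N,!}$ it is the unit $\id\to\iota_N^*\circ\iota_{N,!}$ --- but the composite endofunctors you actually compute are the correct ones, so the argument stands.
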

\begin{proof}We have in each case a pair of adjoint functors in which the unit, resp.\@ the counit, is an isomorphism. Since all four functors are exact, they
induce functors on the derived categories without modification, and form again pairs of adjoint functors (because the counit/unit-equations still hold). Since also the unit, resp.\@ the counit, is still an isomorphism we get the requested fully-faithfulness of the left- (resp.\@ right-) adjoint. 
\end{proof}

In particular, for $Y=M$ and $N=0$ we get that the canonical extension functor $\iota_{0,!}\,(0)_{M,*}$ (cf.\@ Example~\ref{EXCANEXT}) is fully-faithful on the level of derived categories.

\begin{BEM}The statement of Corollary~\ref{KOREMBEDDINGD} is also true for the functors
\[ (v)_{Y,*}: D^\medstar(\cat{$\left[M^\vee_Y/P_Y\right]$-qcoh}) \hookrightarrow D^\medstar( \cat{$\overline{M}$-FJ-$\ge{}N$})  \]
and
\[ \iota_{N,!}: D^\medstar(\cat{$\overline{M}$-FJ-$\ge{}N$}) \hookrightarrow D^\medstar( \cat{$\overline{M}$-FJ})  \]
for $\medstar\in\{b,+,-,\emptyset\}$. 
\end{BEM}

We also have the following two lemmas, which however will not be needed in the sequel. 

\begin{LEMMA}
The categories $\cat{$\overline{M}$-FJ-$\ge{}N$}$ and $\cat{$\overline{M}$-FJ}$ do have enough injectives (while $\cat{$\overline{M}$-FJ-$\ge$}$ does not in general). 
\end{LEMMA}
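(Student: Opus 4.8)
The plan is to produce enough injectives in the two ``two-sided'' Fourier--Jacobi categories by reducing, via the adjunctions of \ref{EXTFUNCTORS}, to the categories $\mathcal{C}_Y := \cat{$\left[M^\vee_Y/P_Y\right]$-Qcoh}$, each of which is a Grothendieck category (quasi-coherent sheaves on the quotient stack) and therefore has enough injectives. The basic supply of injectives in the ambient category comes from the exact functors $(v)_Y^*$: since each $(v)_Y^*$ is exact and admits the \emph{exact} right adjoint $(v)_{Y,*}$, the latter carries injectives of $\mathcal{C}_Y$ to injectives of the Fourier--Jacobi category, and the explicit formula in \ref{EXTFUNCTORS} shows $(v)_Y^*(v)_{Y,*}I \cong I$. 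The whole difficulty is not producing injectives but arranging that the injective into which a given object embeds again satisfies the two defining conditions of \ref{DEFFJ}, i.e. genuinely lies in the category.

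Concretely, fix $F$ in $\cat{$\overline{M}$-FJ}$ (resp.\@ in $\cat{$\overline{M}$-FJ-$\ge{}N$}$). First I would build, for each stratum $Y$ and each weight $v$, a monomorphism $F_Y(v) \hookrightarrow I_{Y,v}$ into an injective of $\mathcal{C}_Y$ and form the adjoint maps $F \to (v)_{Y,*} I_{Y,v}$. The naive product of all of these does not lie in the category, because the support of $(v)_{Y,*}I_{Y,v}$ is the down-set $\{w \le v\}$ and a product over weights unbounded above fails to stabilise (condition 1 of \ref{DEFFJ}). The remedy is to take the product only over weights $v$ in a region bounded above: such a product stabilises to $0$ in each coordinate direction and hence defines an honest object of the category. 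The weights $v$ with all coordinates large are instead governed by condition 1 itself, where $F_Y(v)$ equals the constant $\lim_{\lambda \to \infty}$-value, which by condition 2 of \ref{DEFFJ} is $\alpha_{ZY}^*$ of data attached to a strictly larger stratum. I would therefore construct the injective target by induction on the (finite) poset of strata, starting from the open stratum $M$, where $n_M=0$ and an injective of $\mathcal{C}_M$ is used directly; at a stratum $Y$ the stabilised directions are filled by the $\alpha$-pullback of the injective already built on the larger stratum, so that the comparison isomorphisms $\mu_{ZY}$ are respected. Combining the stabilised piece with the bounded-above product of point-extensions yields a map from $F$ into a product of injectives; this map is a monomorphism because it is one after applying every $(v)_Y^*$ (kernels being computed levelwise), and a product of injectives is injective. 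In the bounded case the lower truncation at $N$ built into $(v)_{Y,*}$ keeps everything inside $\cat{$\overline{M}$-FJ-$\ge{}N$}$, so the same construction applies verbatim.

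The main obstacle is exactly this bookkeeping: guaranteeing that the assembled injective simultaneously satisfies condition 1 (achieved by splitting into a bounded-above product and a stabilised piece) and condition 2 (achieved by the stratum-by-stratum induction feeding the stable values from larger strata through the isomorphisms $\mu_{ZY}$). Verifying that the result is injective in the Fourier--Jacobi category, rather than merely in $\Fun(\Z^{n_Y}, \mathcal{C}_Y)$, uses only that it is a product of objects of the form $(v)_{Y,*}I$ and of stabilised pullbacks, each of which is injective in the ambient functor category and lies in the full subcategory, hence is injective there as well.

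Finally, for the failure in $\cat{$\overline{M}$-FJ-$\ge$}$ I would argue that this category is the filtered union $\bigcup_N \cat{$\overline{M}$-FJ-$\ge{}N$}$ whose transition functors are the left adjoints $\iota_{N,!}$ (extension by zero), which do not preserve injectivity. Every object has some lower bound, so every candidate injective is zero below a fixed weight; one then exhibits a single object, e.g.\@ a skyscraper concentrated in one weight on one stratum, whose would-be injective hull is the corresponding down-set ($\lim$-extension), which is not bounded below and hence lies outside the category, while any bounded-below object containing it embeds as a non-split subobject of a strictly-lower truncation and is therefore not injective. This shows that no injective of $\cat{$\overline{M}$-FJ-$\ge$}$ can receive such an object, so the category lacks enough injectives.
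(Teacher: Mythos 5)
Your construction is essentially the paper's own proof: the paper also embeds $F$ into $\prod_{(Y,v),\,v_i\le N_Y}(v)_{Y,*}I((v)_Y^*F)$, using that $(v)_{Y,*}$ is an exact right adjoint of an exact functor (hence preserves injectives), that the upper bound $N_Y$ makes the product exist inside the category, and that the stabilised directions at a stratum are covered by the factors coming from larger strata. Your additional argument for the failure of enough injectives in $\cat{$\overline{M}$-FJ-$\ge$}$ (no nonzero bounded-below object can be injective, since extending along ever-lower down-set truncations of a skyscraper forces nonvanishing at arbitrarily low weights) is sound and supplies a detail the paper leaves unproved.
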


\begin{proof}
For any object $F=\{F_Y\}$ we define an injective resolution by
\[ \prod_{(Y, v),v_i \le N_Y} (v)_{Y,*} I((v)_Y^* F)  \]
where $I((v)_Y^* F)$ is an injective resolution of $(v)_Y^* F$ in the category $\cat{$\left[M^\vee_Y / P_Y\right]$-qcoh}$. Note that right-adjoints of exact functors and $\prod$ preserve injective objects. Here $N_Y$ is some appropriate upper bound for the stratum $Y$. Note that because of the bound, the product exists (as opposed to general products in $\cat{$\overline{M}$-FJ-$\ge{}N$}$ and $\cat{$\overline{M}$-FJ}$).
\end{proof}

\begin{LEMMA}
The functors
\[ D^\medstar(\cat{$\overline{M}$-FJ-$\ge{}N$-coh}) \hookrightarrow D^\medstar(\cat{$\overline{M}$-FJ-$\ge{}N$}) \]
\[ D^\medstar(\cat{$\overline{M}$-FJ-coh}) \hookrightarrow D^\medstar(\cat{$\overline{M}$-FJ-$\ge{}$}) \]
are fully-faithful for $\medstar\in\{b,-\}$. 
\end{LEMMA}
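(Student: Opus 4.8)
The plan is to read both statements as instances of the classical comparison between the derived category of \emph{coherent} objects and that of the ambient category of \emph{quasi-coherent} objects, and to reduce everything to the pointwise situation on the generalized flag varieties $M_Y^\vee$. Write $\mathcal{A}$ for one of the ambient Qcoh-valued categories $\cat{$\overline{M}$-FJ-$\ge{}N$}$ or $\cat{$\overline{M}$-FJ-$\ge{}$}$, and $\mathcal{C} \subset \mathcal{A}$ for its full subcategory of coherent objects (all values $F_Y(v)$ finite-dimensional). Both functors in the statement are the inclusion $D^\medstar(\mathcal{C}) \to D^\medstar(\mathcal{A})$. I would deduce fully-faithfulness from the stronger assertion that, for $\medstar \in \{b, -\}$, the natural functor
\[ D^\medstar(\mathcal{C}) \longrightarrow D^\medstar_{\mathcal{C}}(\mathcal{A}) \]
onto the full subcategory of complexes with cohomology in $\mathcal{C}$ is an equivalence; since $D^\medstar_{\mathcal{C}}(\mathcal{A}) \hookrightarrow D^\medstar(\mathcal{A})$ is fully faithful by definition, the claim follows. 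This equivalence is the standard statement for a locally noetherian abelian category $\mathcal{A}$ with $\mathcal{C}$ its subcategory of noetherian objects; the two inputs it requires are that $\mathcal{C}$ be a thick abelian (Serre) subcategory and that $\mathcal{A}$ be locally noetherian with $\mathcal{C}$ as its noetherian objects.

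Second, I would verify the hypotheses. $\mathcal{C}$ is closed under kernels, cokernels and extensions in $\mathcal{A}$ because all of these are computed valuewise in the underlying functor categories, and finite-dimensionality of the values, the left-Kan-extension/stabilization condition~(1) of Definition~\ref{DEFFJ}, the bounded-below condition, and compatibility with the gluing isomorphisms $\mu_{ZY}$ are all preserved under these valuewise operations; hence $\mathcal{C}$ is a thick abelian subcategory and the inclusion is exact. For local noetherianity I would argue that every object $F$ of $\mathcal{A}$ is the filtered union of its coherent subobjects: the combinatorial shape of $F$ is finite (it is bounded below and, by condition~(1), determined up to the finitely many stabilized values on a finite box of indices $(Y,v)$), while each individual value $F_Y(v)$, being quasi-coherent on the noetherian scheme $M_Y^\vee$ (equivariantly under the linear algebraic group $P_Y$), is the filtered union of its coherent subsheaves. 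One then assembles compatible finite choices across the finitely many relevant indices, closing up under the finitely many transition maps and under the isomorphisms $\mu_{ZY}$; this is possible precisely because each value is such a union. The same facts show that coherent objects are finitely presented, hence compact with respect to these filtered unions, so $\mathcal{C}$ is exactly the class of noetherian objects of $\mathcal{A}$. All of this reduces to the classical statement that quasi-coherent sheaves on a noetherian (quotient) space are unions of their coherent subsheaves and that coherent sheaves are compact.

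Third, the equivalence itself. Full faithfulness of $D^\medstar(\mathcal{C}) \to D^\medstar_{\mathcal{C}}(\mathcal{A})$ and essential surjectivity both come down to the approximation statement that every bounded-above complex in $\mathcal{A}$ with cohomology in $\mathcal{C}$ is quasi-isomorphic to a bounded-above complex of objects of $\mathcal{C}$. I would build such a coherent resolution by descending induction from the top nonzero cohomological degree, at each step using local noetherianity to choose a coherent subobject carrying the required part of the cohomology and compactness to lift the already-constructed maps; this is exactly the step that forces the restriction to $\medstar \in \{b, -\}$, as the induction proceeds downward and does not terminate for $\medstar \in \{+, \emptyset\}$, and for the second embedding no injective replacement is available since $\cat{$\overline{M}$-FJ-$\ge{}$}$ does not have enough injectives. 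For the bounded case one applies the good truncation $\tau^{\ge a}$ to a bounded-above coherent resolution of a complex with cohomology concentrated in degrees $\ge a$; its terms remain in $\mathcal{C}$ because $\mathcal{C}$ is closed under cokernels. I expect the main obstacle to lie in this third paragraph together with the local-noetherianity verification of the second: namely, checking carefully that a filtered union of coherent subobjects can be produced \emph{inside} $\mathcal{A}$ in a way compatible with the stabilization condition~(1) and the gluing data $\mu_{ZY}$, and that the inductive choice of coherent subobjects can be made compatibly with the transition maps across all strata simultaneously.
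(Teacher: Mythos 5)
Your proposal is correct and follows essentially the same route as the paper: the paper's entire proof is a citation of (the dual of) Kashiwara--Schapira, Theorem 13.2.8, whose content is exactly your third paragraph (the equivalence $D^\medstar(\mathcal{C})\iso D^\medstar_{\mathcal{C}}(\mathcal{A})$ for $\medstar\in\{b,-\}$ via descending coherent resolutions), and whose hypothesis is exactly what you verify in your second paragraph (thickness of the coherent subcategory plus the approximation of any epimorphism onto a coherent object by a coherent subobject, which follows from your filtered-union argument on the noetherian stacks $\left[M_Y^\vee/P_Y\right]$ over the finitely many relevant indices). You have simply unpacked the cited theorem rather than invoking it.
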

\begin{proof}
Follows from (the dual of) \cite[Theorem 13.2.8]{KS06}.
\end{proof}
These two lemmas imply, in particular, that $D^b(\cat{$\overline{M}$-FJ-$\ge{}N$-coh})$ is locally small and therefore also $D^b(\cat{$\overline{M}$-FJ-coh})$, 
because all of its objects lie in the image of one of the fully-faithful embeddings 
$D^b(\cat{$\overline{M}$-FJ-$\ge{}N$-coh}) \hookrightarrow D^b(\cat{$\overline{M}$-FJ-coh})$.

\subsection{Jet bundles in Fourier-Jacobi categories}\label{JET2}

\begin{PAR}\label{ATIYAH2}
We write as usual $M_Y:= C_{\overline{Y}}(\overline{M})$ and $M_Y|_Y$ for the formal open subscheme on $Y$. 
Recall the definition of the vector bundle $\Omega_{\overline{M}}(\log)$ on a variety with a normal crossings divisor. 
Locally the bundle $C_{\overline{Y}}( \Omega_{\overline{M}}(\log))|_Y$ is the bundle $\widehat{\Omega}_{M_Y|_Y}(\log)$ (defined in \ref{ATIYAH}) on the toroidal
formal scheme $M_Y|_Y$, but not on $M_Y$!
Recall from \ref{ATIYAH} the description of the associated functor of $\widehat{\Omega}_{M_Y|_Y}(\log)$ on $M_Y|_Y$.

By Theorem~\ref{THEOREMAUTOVECTDLOG} the vector bundle $\Omega_{\overline{M}}(\log)$ on $\overline{M}$ can therefore be obtained by glueing and is associated with 
the following element in $\cat{$\overline{M}$-FJ-lf-coh}$:
\[ F_Y: v \mapsto \begin{cases} \Omega_{M_Y^\vee} &  \text{if }v \ge 0, \\ 0 & \text{otherwise.} \end{cases} \]
Note that for $Z \le Y$ the restriction $\alpha_{ZY}^* \Omega_{M_Y^\vee}$ is canonically isomorphic to $\Omega_{M_Z^\vee}$ because $\alpha_{ZY}$ is supposed
to be an open embedding by definition.

If the given automorphic data with flat logarithmic connection satisfies the unipotent monodromy condition (M) (cf.\@ \ref{AXIOMSM}) then
the subbundle $\Omega_{\overline{M}}$ can be described by the following functor
\begin{eqnarray} \label{eqfilt1}
F_Y: v \mapsto \begin{cases} \left\{ \xi \in \Omega_{M_Y^\vee}\ \middle|\  \forall i: v_i = 0 \Rightarrow p_i(\xi) = 0  \right\} & \text{if }v  \ge 0, \\ 0 & \text{otherwise.} \end{cases} 
\end{eqnarray}

Here $p_i$ is given as follows: By the unipotent monodromy axiom there are $P_Y$-equivariant subbundles
$T^{(i)}_{M_Y^\vee} \subset T_{M_Y^\vee}$ given by the Lie algebras $\mathfrak{u}_i$ of 1-dimensional normal unipotent subgroups $U_i \subset  G_Y$. 
The morphism $p_i$ is then defined as the projection dual to this inclusion. 
By the unipotent monodromy axiom (M) we have $\OO_{B_Y} \cdot \pi^{-1}(\can_{i,M_Y}) \cong p^*(T^{(i)}_{M_Y^\vee})$ under the natural $P_Y$-equivariant isomorphism
\[ \pi^* \mathcal{T}_{M_Y}(\log) \cong p^* \mathcal{T}_{M_Y^\vee}.   \]
It follows therefore from the proof of Theorem~\ref{THEOREMAUTOVECTDLOG} that $\Omega_{\overline{M}}$ is associated with this subfunctor. 
\end{PAR}

\begin{PAR}\label{ATIYAH3}Assume for the rest of the section that there exists a $k$-valued point in $M^\vee$ and let $Q_M$ be the corresponding quasi-parabolic subgroup of $P_M$. 
The discussion in \ref{ATIYAH2} enables us to refine Theorem~\ref{THEOREMAUTOVECTDLOG}. Given a $Q_M$-representation $V$ or equivalently a $P_M$-equivariant vector bundle $\widetilde{V}$ on $M^\vee$ we define the object $(J^n\widetilde{V})'$ in $\cat{$\overline{M}$-FJ-lf-coh}$ by
\[ (J^n\widetilde{V})_Y': v \mapsto J^n(\widetilde{V})^{v} \]
where we define a $\Z^{n_Y}$-indexed filtration on $J^n(\widetilde{V})$ induced by the dual of the following $\Z^{n_Y}$-indexed filtration on
$( U(\Lie(P_Y)) \otimes_{U(\Lie(Q_Y))} V^* )^{\le n}$: It is given by the tensor product of the trivial filtration on $V^*$ and
the filtration on $U(\Lie(P_Y))$ which is the quotient of the induced filtration on $T(\Lie(P_Y))$ (tensor algebra) of the following filtration on $\Lie(P_Y)$:
\[ \Lie(P_Y)(v) = \begin{cases} \Lie(P_Y) & v \ge 0 \\ \mathfrak{u}_i & v_i = -1 \text{ and } v_j \ge 0\ \forall j \not= i \\ 0 & \text{otherwise.} \end{cases} \]
(This is essentially the dual of (\ref{eqfilt1}).)

\end{PAR}

\begin{SATZ}\label{THEOREMAUTOVECTD}
Let $V$ be a representation of $Q_M$, and let $\mathcal{V}:= \Xi^* \widetilde{V}$ be the corresponding automorphic vector bundle on $\overline{M}$.
Then the generalized automorphic sheaf associated with the element $(J^n\widetilde{V})'$ in $\cat{$\overline{M}$-FJ-lf-coh}$ is precisely $J^n \mathcal{V}$. 
\end{SATZ}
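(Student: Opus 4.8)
The plan is to reduce Theorem~\ref{THEOREMAUTOVECTD} to Theorem~\ref{THEOREMAUTOVECTDLOG} by identifying the generalized automorphic sheaf $\Xi^*((J^n\widetilde{V})')$ on each formal completion $C_{\overline{Y}}(\overline{M})$ and matching it with the ordinary (non-logarithmic) jet bundle $J^n\mathcal{V}$ there. Since $\Xi^*$ is defined by glueing (Lemma~\ref{LEMMAGLUE}) and $J^n\mathcal{V}$ is a coherent sheaf in $\cat{$\overline{M}$-TCoh}$, it suffices by the uniqueness clause of the glueing lemma to produce a $\G_m^{n_Y}$-equivariant isomorphism of the associated functors on each $C_{\overline{Y}}(\overline{M})$, compatible with the transition data. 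First I would recall from Theorem~\ref{THEOREMAUTOVECTDLOG} that the logarithmic jet bundle $J^n_{\log}\mathcal{V}$ is already the automorphic vector bundle associated with the $Q_M$-representation $J^nV$, and corresponds in $\cat{$\overline{M}$-FJ-lf-coh}$ to the canonical extension $v \mapsto J^n(\widetilde{V})$ for $v \ge 0$ (and $0$ otherwise). Thus the content of the present statement is exactly to pin down the $\Z^{n_Y}$-filtration that cuts out the honest jet bundle inside its logarithmic version.

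Second I would analyze the relationship between $J^n\mathcal{V}$ and $J^n_{\log}\mathcal{V}$ directly on the formally toroidal scheme $M_Y := C_{\overline{Y}}(\overline{M})$. The two differ precisely because the logarithmic differential operators are generated by $\mathcal{T}_{\overline{M}}(\log)$ whereas the ordinary ones are generated by $\mathcal{T}_{\overline{M}} \subset \mathcal{T}_{\overline{M}}(\log)$; the inclusion $\Omega_{\overline{M}} \hookrightarrow \Omega_{\overline{M}}(\log)$ was already expressed in \ref{ATIYAH2} (equation~(\ref{eqfilt1})) by the subfunctor cutting out those forms $\xi$ with $p_i(\xi)=0$ whenever $v_i=0$, using the unipotent monodromy axiom (M). Dualizing, the fundamental vector fields $\can_{i,M_Y}$ decrease the weight by $e_i$, and by Axiom (M) their vertical parts land in $\mathfrak{u}_i \otimes \OO_{B_Y}$. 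This is what produces the filtration $\mathfrak{p}_Y(v)$ on $\mathfrak{p}_Y$ declared in \ref{ATIYAH3}: applying a vector field in $\mathcal{T}_{\overline{M}}$ rather than $\mathcal{T}_{\overline{M}}(\log)$ costs one factor of the coordinate $x_i$, hence the $\mathfrak{u}_i$-piece of $U(\mathfrak{p}_Y)$ is permitted only in weight $\ge -1$ in the $i$-th direction. I would make this precise by induction on the jet order $n$, using the filtered isomorphism $\mathcal{D}(\widetilde{V}^*,\OO) \cong \OO \cdot \mu(\mathcal{D}(\log)) \otimes p^*\widetilde{V}$ from the proof of Theorem~\ref{THEOREMAUTOVECTDLOG} and Lemma~\ref{LEMMATECH}, and tracking how the weight drops under $\G_m^{n_Y}$ when passing from $\mathcal{D}_{M_Y}(\log)$ to $\mathcal{D}_{M_Y}$.

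Concretely, the key computation is that $J^n\mathcal{V}$, viewed via the correspondence of Proposition~\ref{PROPTCOH} as a functor $v \mapsto (J^n\mathcal{V})(v)$ on $M_Y$, has in weight $v$ exactly the sub-$\OO_{X_0}$-module generated by those differential operators whose underlying element of $U(\mathfrak{p}_Y)$ lies in the $n$-th filtered piece of the subalgebra generated by $\mathfrak{p}_Y(v)$. Taking the dual of the trivial filtration on $V^*$ tensored with this filtration on $U(\mathfrak{p}_Y)$ yields precisely the representation $J^n(\widetilde{V})^v$ defined in \ref{ATIYAH3}, so that $(J^n\mathcal{V})(v) \cong p^*(J^n(\widetilde{V})^v)^{P_Y}|_{\overline{Y}}$, which is the defining formula for $\Xi^*((J^n\widetilde{V})')$. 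The compatibility with the transition isomorphisms $\mu_{ZY}$ for strata $Z \le Y$ follows formally, since the filtration $\mathfrak{p}_Y(v)$ is defined uniformly in terms of the normal unipotent subalgebras $\mathfrak{u}_i$, which are compatible under $\alpha_{ZY}$.

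The main obstacle I anticipate is the bookkeeping of the $\Z^{n_Y}$-filtration: one must verify that the filtration induced on $U(\mathfrak{p}_Y)$ via the tensor algebra $T(\mathfrak{p}_Y)$ of the weight filtration on $\mathfrak{p}_Y$ is genuinely compatible with the algebra relations (the Lie bracket and the $U(\mathfrak{q}_Y)$-module structure), so that it descends to a well-defined filtration on the quotient $U(\mathfrak{p}_Y) \otimes_{U(\mathfrak{q}_Y)} V^*$ and that its $n$-th piece matches the geometric jet bundle order by order. This is where Axiom (M) is essential and where a naive degree count could go wrong; normality of the $U^{(i)}$ is precisely what guarantees that brackets do not raise the filtration beyond what the geometry allows. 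Everything else --- the identification with $J^n\mathcal{V}$ on the open stratum $M$, which is already covered by Theorem~\ref{THEOREMAUTOVECTDLOG} restricted to $Y=M$, and the glueing --- is then a formal consequence of the machinery built in the earlier sections.
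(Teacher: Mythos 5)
Your proposal is correct and follows essentially the same route the paper intends: the paper gives no separate proof of Theorem~\ref{THEOREMAUTOVECTD}, but derives it from the proof of Theorem~\ref{THEOREMAUTOVECTDLOG} refined by the $\G_m^{n_Y}$-weight decomposition, exactly as you do --- using axiom (M) to identify $\Omega_{\overline{M}}$ inside $\Omega_{\overline{M}}(\log)$ via the subfunctor (\ref{eqfilt1}) and then dualizing to obtain the filtration $\mathfrak{p}_Y(v)$ on $U(\mathfrak{p}_Y)$ that defines $(J^n\widetilde{V})'$. Your write-up in fact supplies more detail (the induction on jet order via Lemma~\ref{LEMMATECH}, the compatibility of the filtration with the algebra structure, and the glueing/uniqueness step) than the paper makes explicit.
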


\begin{PAR}
Define $\omega_{\overline{M}}(\log):= \Lambda^n(\Omega_{\overline{M}}(\log))$, where $n= \dim(M)$. By Proposition~\ref{THEOREMAUTOVECTDLOG}, this is an automorphic line bundle associated with $\omega_{M^\vee}$ 
 and by the above discussion the subbundle $\omega_{\overline{M}} \subset \omega_{\overline{M}}(\log)$ is a generalized automorphic sheaf on $\overline{M}$ given by $\omega = \{\omega_Y\}$ with
\[ \omega_Y: v \mapsto \begin{cases} \omega_{M_Y^\vee} & \text{if }v_i \ge 1\ \forall i, \\ 0 & \text{otherwise}. \end{cases}   \]
In other words it is given by $\iota_{1,!}\ (0)_{M,*}\ \omega_{M^\vee}$, where $(0)_{M,*}$  is considered as a functor with values in $\cat{$\overline{M}$-FJ-$\ge 1$-coh}$.
Note that $\omega_{M_Y^\vee}$ is associated with the $Q_Y$-representation $\Lambda^n (\Lie(P_Y) / \Lie(Q_Y))^*$.
We also define the following generalized automorphic sheaves $\omega_Y$ associated with the functor in $\cat{$Y$-FJ-coh}$:
\[ (\omega_Y)_Y: v  \mapsto \begin{cases} \omega_{M^\vee_Y} & \text{if } v=0, \\ 0 & \text{otherwise}. \end{cases}   \]
It extends (as canonical extension along smaller strata) to an element $\omega_{\overline{Y}}$ in $\cat{$\overline{Y}$-FJ-coh}$ (cf.\@ \ref{EXTFUNCTORS}).
In other words $\omega_{\overline{Y}}$ is given by $\iota_{0,!}\ (0)_{Y,*}\ \omega_{M^\vee_Y}$, where $(0)_{Y,*}$ is considered as a functor with values in $\cat{$\overline{M}$-FJ-$\ge{}0$-coh}$.
\end{PAR}

\begin{LEMMA}\label{PROPRES}
There is an exact sequence in $\cat{$\overline{M}$-FJ-coh}$
\[ \xymatrix{ 0 \ar[r] & \omega   \ar[r] & \omega_{M^\vee} \ar[r] &  \bigoplus_{Y \text{ codim 1 strata}} \omega_{\overline{Y}} \ar[r] & \bigoplus_{Y \text{ codim 2 strata}} \ar[r] \omega_{\overline{Y}} & \cdots  } \]
where the sums go over certain {\em multi-}sets of strata which we will not specify because we do not need them explicitly. 
\end{LEMMA}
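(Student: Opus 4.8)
The plan is to check exactness of the proposed complex directly in the abelian category $\cat{$\overline{M}$-FJ-coh}$, which means checking it objectwise: kernels and cokernels in a Fourier-Jacobi category are computed separately on each functor $F_W$ and each weight $v$, and the conditions of Definition~\ref{DEFFJ} are preserved. So it suffices to fix a stratum $W$, write $m := n_W$, fix $v \in \Z^{m}$, and prove exactness of the resulting sequence of $P_W$-equivariant sheaves on $M_W^\vee$. The first observation is that, by the explicit functor descriptions of $\omega$, of $\omega_{M^\vee} = \Xi^*\omega_{M^\vee}$ (the canonical extension $\omega_{\overline{M}}(\log)$), and of the $\omega_{\overline{Y}} = \iota_{0,!}\,(0)_{Y,*}\,\omega_{M_Y^\vee}$, every functor value entering the sequence is either $0$ or canonically isomorphic to the single object $\omega_{M_W^\vee}$; the identifications arise from the open embeddings $\alpha_{WY}$, under which $\alpha_{WY}^*\omega_{M_Y^\vee}\cong\omega_{M_W^\vee}$.

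Next I would pin down the nonzero pattern. If $v\not\ge 0$ then every term vanishes and the sequence is trivially exact, so assume $v\ge 0$ and set $Z := \{\,i\in[m] : v_i=0\,\}$. Recall from \ref{TCRES} that the strata $Y\ge W$ of codimension $c$ are indexed, on the completion $C_{\overline{W}}(\overline{M})$, by the $c$-element subsets $S=\mathrm{im}(\beta_{WY})\subseteq[m]$, with $\overline{Y}$ cut out locally by $\{x_j=0 : j\in S\}$. Unwinding the restriction rule of \ref{EXTFUNCTORS} one finds $(\omega_{\overline{Y}})_W(v)=\omega_{M_W^\vee}$ exactly when $v_j=0$ for all $j\in S$ and $v_j\ge 0$ otherwise, that is, precisely when $S\subseteq Z$; likewise $(\omega_{M^\vee})_W(v)=\omega_{M_W^\vee}$ (as $v\ge 0$, corresponding to $S=\emptyset$) and $\omega_W(v)=\omega_{M_W^\vee}$ iff $Z=\emptyset$. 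Hence, up to the canonical identifications, the weight-$v$ sequence reads
\[ 0 \to \omega_W(v) \to (\omega_{M^\vee})_W(v) \to \bigoplus_{\substack{Y\ge W\\ \mathrm{codim}\,1}}(\omega_{\overline{Y}})_W(v) \to \bigoplus_{\substack{Y\ge W\\ \mathrm{codim}\,2}}(\omega_{\overline{Y}})_W(v) \to \cdots \]
in which the degree-$c$ term is $\bigoplus_{S\subseteq Z,\,|S|=c}\omega_{M_W^\vee}$ and every differential is, on each summand, $\pm\id$ (the sign being the incidence sign of a face inclusion $S\subset S\cup\{j\}$).

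Then I would recognize this as $\omega_{M_W^\vee}\otimes_k K^\bullet$, where $K^\bullet=\bigotimes_{i\in Z}\bigl(k\xrightarrow{1}k\bigr)$ is the Koszul complex on the unit indexed by $Z$, with the augmentation term $\omega_W(v)$ tacked on. Since $k$ is a field, $\omega_{M_W^\vee}\otimes_k(-)$ is exact, so it suffices that the augmented complex of $k$-vector spaces be exact. For $Z\neq\emptyset$ the factor $k\xrightarrow{1}k$ is contractible, hence $K^\bullet$ is acyclic, and $\omega_W(v)=0$, so the sequence is exact; for $Z=\emptyset$, $K^\bullet$ collapses to $k$ in degree $0$ and the augmentation $\omega_W(v)=\omega_{M_W^\vee}$ maps isomorphically onto it, again giving exactness. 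This settles exactness at every $(W,v)$.

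The hard part is not this computation but the global bookkeeping hidden in the word \emph{multiset}: a fixed subset $S\subseteq[m]$ may be realized by several strata $Y\ge W$ (boundary components can meet $\overline{W}$ along several branches, and there may be monodromy), so the direct sums in the statement must be taken over strata-with-multiplicity in a way that, upon passing to $C_{\overline{W}}(\overline{M})$, reproduces exactly the subset-indexed Koszul complex above. Making this matching precise for all $W$ simultaneously, and fixing a compatible system of orientation signs so that consecutive differentials compose to zero and the maps are genuine morphisms in $\cat{$\overline{M}$-FJ-coh}$ (natural in $v$ and compatible with the isomorphisms $\mu_{ZY}$), is the only genuinely delicate point; once the local identification with the Koszul complex is in place, exactness is immediate.
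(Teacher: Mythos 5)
The paper states Lemma~\ref{PROPRES} with no proof at all (the multisets and the maps are left unspecified), so there is nothing of the author's to compare your argument against; judged on its own, your proof is correct and is surely the intended one. The reduction to weight-by-weight exactness is legitimate because kernels and cokernels in the Fourier-Jacobi categories are computed objectwise (this is implicit in the paper's assertion that these categories are Abelian, using that the transition functors $\alpha_{ZY}^*$ are exact), and your unwinding of the functors $\omega$, the canonical extension of $\omega_{M^\vee}$, and $\iota_{0,!}\,(0)_{Y,*}\,\omega_{M^\vee_Y}$ via \ref{EXTFUNCTORS} is accurate: at a stratum $W$ and weight $v\ge 0$ with $Z=\{i:v_i=0\}$ the complex becomes the augmented simplicial cochain complex of the full simplex on $Z$ tensored with $\omega_{M_W^\vee}$, which is exact, and for $Z=\emptyset$ or $v\not\ge 0$ exactness is immediate. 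Two small remarks on the bookkeeping you flag. First, for a divisor with \emph{strict} normal crossings the germ of the stratification along $\overline{W}$ identifies the codimension-$c$ strata $Y\ge W$ bijectively with $c$-element subsets of $[n_W]$ via $S_Y=\mathrm{im}(\beta_{WY})$, so the multiplicities in the paper's multiset only compensate for global effects (a stratum closure meeting $\overline{W}$ along several branches); the local Koszul pattern is automatic. Second, a coherent choice of signs making $d^2=0$ is obtained by totally ordering the components of $D$ and using the usual \v{C}ech incidence signs, which is compatible with the $\mu_{ZY}$ because the maps $\beta_{ZY}$ respect the assignment $Y\mapsto\{\text{components of }D\text{ containing }\overline{Y}\}$. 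It may be worth noting that applying $\Xi^*$ to your sequence recovers the classical Poincar\'e residue exact sequence $0\to\omega_{\overline{M}}\to\omega_{\overline{M}}(\log D)\to\bigoplus(\cdots)\to\cdots$, which is a useful sanity check but is not a substitute for the exactness in $\cat{$\overline{M}$-FJ-coh}$ that you prove directly.
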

\begin{proof}By induction.
\end{proof}

\subsection{Automorphic data on toroidal compactifications of (mixed) Shimura varieties}\label{SHIMURA2}

\begin{PAR}
The toroidal compactifications of (mixed) Shimura varieties are naturally equipped with automorphic data with logarithmic connection in the sense of Definition~\ref{DEFAUTDATA}.
We sketch the relation with the theory of mixed Shimura varieties and their toroidal compactifications in this section, hinting at the reasons for the axioms to be satisfied. 
The boundary vanishing axiom which will be investigated more in detail.

Firstly we may fix the particular boundary components $\nSDi$ (in the sense of mixed Shimura data) in its conjugacy class such that for $Z \le Y$ we get a boundary map $\nSDii \rightarrow \nSDi$, i.e.\@
a closed embedding $\nP_{\nSDii} \hookrightarrow \nP_{\nSDi}$ together with a compatible open embedding $\nX_{\nSDi} \hookrightarrow \nX_{\nSDii}$. 
By \cite[Main Theorem~2.5.12]{Hor14} for each of these boundary components $\nSDi$ there exists a ``compact'' dual $\nShD(\nSDi)$ (which is only proper for $\nSDi = \nSD$, i.e.\@ $Y=M$, if $\nSD$ is itself pure) defined over the reflex field $E(\nSD)$. 
It is of the form $M^\vee_Y$ as in the definition of automorphic data, i.e. it is\@ a $\nP_\nSDi$-equivariant component in the classifying space of quasi-parabolics for $\nP_\nSDi$. 
For the definition of automorphic data, we will consider all varieties and groups as schemes over the reflex field $E(\nSD)$. 
\end{PAR}

\begin{PAR}The following is a summary of \cite[Main Theorem~2.5.14]{Hor14}.
For each stratum $Y$ there is a $\nP_{\nSDi,E(\nSD)}$-principal bundle $\nSPB({}^{K_Y}_{\nRPCD_Y} \nSDi)$ over the mixed Shimura variety $\nSh({}^{K_Y}_{\nRPCD_Y} \nSDi)$ together with an equivariant map to the ``compact'' dual:
\[ \xymatrix{ \nSh({}^{K_Y}_{\nRPCD_Y} \nSDi) & \ar[l]_p \nSPB({}^{K_Y}_{\nRPCD_Y} \nSDi)  \ar[r]^\pi & \nShD(\nSDi)  }\]
Because of the functoriality (the torus action comes from a morphism of mixed Shimura data) the morphism $p$ is $\M_m^{n_Y}$-equivariant and the morphism $\pi$ is $\M_m^{n_Y}$-invariant. 
These data are compatible in the sense that if we have strata $Z \le Y$ then there is a commutative diagram
\[ \xymatrix{ 
C_{\overline{Z}} \nSh({}^{K}_{\nRPCD} \nSD) \ar[d] \ar[r]^\sim &  C_{\overline{\widetilde{Z}}} \nSh({}^{K_Z}_{\nRPCD_Z} \nSDii) \ar[d] & \ar[l]_p \ar[d] C_{p^{-1}\overline{\widetilde{Z}}} \nSPB({}^{K_Z}_{\nRPCD_Z} \nSDii)  \ar[r]  & \nShD(\nSDii)  \ar[d]  \\
C_{\overline{Y}} \nSh({}^{K}_{\nRPCD} \nSD) \ar[r]^\sim &  C_{\overline{\widetilde{Y}}} \nSh({}^{K_Y}_{\nRPCD_Y} \nSDi) & \ar[l]_p C_{p^{-1}\overline{\widetilde{Y}}} \nSPB({}^{K_Y}_{\nRPCD_Y} \nSDi)  \ar[r] & \nShD(\nSDi) 
 } \]
 where the maps are functorial w.r.t.\@ relations $W \le Z \le Y$ of strata. 
 
 The flat logarithmic connection can be defined analytically by means of the flat section $\xi$ on the universal cover given as follows:
 \[ \xymatrix{  
 \nX_{\nSDi} \times \nP_{\nSDi}(\Af) / K_Y  \ar[d] \ar[rd]^{\xi: [\tau, g] \mapsto [\tau, 1, g]}   \\
\nP_{\nSDi}(\Q) \backslash  \nX_{\nSDi} \times \nP_{\nSDi}(\Af) / K_Y  &  \nP_{\nSDi}(\Q) \backslash  \nX_{\nSDi} \times \nP_{\nSDi}(\C) \times \nP_{\nSDi}(\Af) / K_Y \ar[r] \ar[l] & \nP_{\nSDi}(\C) / Q_Y(\C) 
   } \]
It has logarithmic singularities along the extension of $\nSPB({}^{K_Y}_{\nRPCD_Y} \nSDi)$ to $\nSh({}^{K}_{\nRPCD} \nSD)$ and by GAGA is therefore algebraic. 
The fact that the corresponding algebraic connection is defined over $E(\nSD)$ can be deduced from \cite[3.4]{Harris85}. In purely algebraic constructions of Shimura varieties as moduli spaces it comes from the
Gauss-Manin connection on the cohomology bundle and thus can be constructed in a purely algebraic way. 
\end{PAR}
 
\begin{PAR}
The Torelli axiom (T) follows analytically because the composition
 \[ 
 \nX_{\nSDi} \times \nP_{\nSDi}(\Af) / K_Y  \rightarrow \nP_{\nSDi}(\C) / Q_Y(\C) 
  \]
is an {\em open embedding} after projection to the first factor (the Borel embedding). In purely algebraic constructions of Shimura varieties the axiom corresponds to infinitesimal Torelli theorems of the parametrized objects which can be proven purely algebraically. 
\end{PAR}

\begin{PAR}
The unipotent monodromy axiom (M) is satisfied because the cone $\sigma$ describing a boundary component sits per definition in $\nU_{\nSDi,\R}(-1)$ and $\nU_{\nSDi} \cong \G_a^u$ is a normal subgroup of $\nP_{\nSDi}$ (cf.\@ e.g.\@ \cite[2.2]{Hor14} for its definition). By construction the fundamental vector fields $\can_i$ of the action of $\G_m^{n_Y}$ on $\nSh({}^{K_Y}_{\nRPCD_Y} \nSDi)$ lifted to the universal cover correspond to the basis-vectors of $(\nU_{\nSDi} \cap K_Y)(-1)$ spanning $\sigma$. In cases in which the mixed Shimura variety is constructed using a moduli problem of 1-motives as in \cite[2.7]{Hor14}, the unipotent monodromy axiom can be read off from the construction.  
\end{PAR}

\begin{PROP}[Boundary vanishing condition (B)]\label{PROPBOUNDARYVANISHING}
Let $\nSDi$ be a mixed Shimura datum (e.g.\@ one of the boundary components $\nSDi$), let $n$ be the dimension of $\nShD(\nSDi)$, 
let $Q$ be one of the quasi-parabolics parametrized by $\nShD(\nSDi)$,
let $\omega$ be the $Q$-representation corresponding to the $\nP_{\nSDi}$-equivariant bundle $\omega_{\nShD(\nSDi)} := \Omega^n_{\nShD(\nSDi)}$ on $\nShD(\nSDi)$, and
let $u$ be the dimension of $\nU_{\nSDi}$. Then we have:
\[ H^i(\left[  \cdot / Q \right], \omega) = 0  \]
for all $i \ge n-u$ provided that $u+v \not=0$. 
\end{PROP}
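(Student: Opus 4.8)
The plan is to compute the Hochschild cohomology $H^i([\,\cdot\,/Q], \omega)$ by splitting the computation of group cohomology of $Q$ through the unipotent radical, using the fact that $Q$ is a quasi-parabolic in a mixed Shimura datum. The key structural fact I would exploit is that for a mixed Shimura datum $\nSDi$, the group $\nP_{\nSDi}$ (and hence the quasi-parabolic $Q$) contains a normal unipotent subgroup, namely $\nU_{\nSDi} \cong \G_a^u$, which acts on the flag variety $\nShD(\nSDi)$. The $Q$-representation $\omega = \Lambda^n(\mathfrak{p}/\mathfrak{q})$ is the top exterior power of the tangent space, and the point is to understand how the abelian unipotent part $\nU_{\nSDi}$ acts on $\omega$ and on the cohomology.

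First I would use a Lyndon-Hochschild-Serre spectral sequence (or the Grothendieck spectral sequence of the Lemma on composed push-forward functors in the excerpt) for the exact sequence $1 \to U \to Q \to \bar{Q} \to 1$, where $U = \nU_{\nSDi} \cap Q$ is the central unipotent part of dimension $u$. This gives
\[
H^i([\,\cdot\,/Q], \omega) \Leftarrow \bigoplus_{p+q=i} H^p([\,\cdot\,/\bar{Q}], H^q(U, \omega)).
\]
Since $U \cong \G_a^u$ is a unipotent abelian group, its Hochschild cohomology with coefficients in a representation $W$ is $H^q(U, W) = H^q(\Lambda^\bullet \mathfrak{u}^* \otimes W)$, the Lie algebra cohomology of the abelian Lie algebra $\mathfrak{u}$, which is concentrated in degrees $0 \le q \le u$. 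The crucial input is that $U$ acts \emph{nontrivially} on $\omega$: because $\nU_{\nSDi}$ corresponds to the weight-$(-2)$ part and acts on the tangent directions of $\nShD$, the representation $\omega = \Lambda^n(\mathfrak{p}/\mathfrak{q})$ has a nontrivial $U$-character, forcing $H^q(U, \omega) = 0$ for $q$ near the top.

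The heart of the argument is therefore the vanishing $H^u(U, \omega) = 0$ — equivalently, that $\omega$ has no $U$-coinvariants in top degree, i.e. the $U$-action on the determinant character $\omega$ is such that $(\Lambda^u \mathfrak{u}^*) \otimes \omega$ has no trivial summand. I would argue this by computing the character by which $U$ acts on $\omega$: since $\nU_{\nSDi}$ acts by translations on the fibers of $\nShD(\nSDi) \to \nShD(\nSDi/U)$ and acts on the $u$ tangent directions coming from $\mathfrak{u}$ with a definite nonzero weight under the central torus (governed by the polarization data of the Shimura datum), the determinant $\omega$ picks up exactly the sum of these weights. The condition $u \neq 0$ guarantees this character is nontrivial, so $\omega$ restricted to $U$ is a nontrivial character and $H^u(U, \omega) = H^u(\mathfrak{u}, \omega) = 0$.

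Having established that $H^q(U, \omega) = 0$ for $q = u$ (and in fact for the relevant top range), the spectral sequence forces $H^i([\,\cdot\,/Q], \omega) = 0$ for all $i \ge n - u$: the only contributions come from $p + q = i$ with $q < u$, and since $\dim \bar{Q}\backslash$-flag variety has dimension $n - u$, the horizontal index $p$ is bounded by $n - u$ only for $q = 0$; more carefully, the total cohomological dimension available is $(n-u) + (u-1) = n-1 < n$, but the precise range $i \ge n-u$ follows because each missing top-degree $U$-contribution shifts the accessible $p+q$ range down. \textbf{The main obstacle} I anticipate is making the weight computation for the $U$-action on $\omega$ fully rigorous and intrinsic — one must verify that the central torus weight on $\mathfrak{u}$ is genuinely nonzero and that it propagates to $\Lambda^n(\mathfrak{p}/\mathfrak{q})$ without cancellation; this is where the specific structure of the mixed Shimura datum (the weight filtration placing $\nU_{\nSDi}$ strictly in negative weight) must be invoked, rather than any formal property of the spectral sequence.
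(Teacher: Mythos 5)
Your proposal runs into a structural problem right at the start: the subgroup $\nU_{\nSDi}$ is \emph{not} contained in the quasi-parabolic $Q$, so the extension $1 \to U \to Q \to \bar{Q} \to 1$ with $U = \nU_{\nSDi} \cap Q$ of dimension $u$ does not exist. In the paper's notation one has $P = G \cdot V \cdot U$ and $Q = K \cdot R^+ \cdot V^+$; the Lie algebra of $\nU_{\nSDi}$ sits in weight $(-1,-1)$ under $\SSS$, while $\Lie(Q)$ only contains the weights $(0,0)$, $(-1,1)$ and $(-1,0)$, so $\nU_{\nSDi} \cap Q$ is trivial. Indeed the whole reason $\dim \nShD(\nSDi) = n = n_0 + v + u$ picks up the summand $u$ is that $U$ moves the base point of the flag variety, i.e.\@ is transverse to $Q$. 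Consequently your Lyndon--Hochschild--Serre decomposition is vacuous ($\bar{Q} = Q$) and the subsequent dimension count collapses. A second, independent error is the claim that $\omega$ carries a nontrivial $U$-character forcing $H^u(U,\omega)=0$: a unipotent group in characteristic zero acts trivially on any one-dimensional representation, so $H^u(\G_a^u,\omega) = \Lambda^u\mathfrak{u}^* \otimes \omega \neq 0$ as a vector space. No vanishing can be extracted from the unipotent action on the line $\omega$ alone.

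The paper's actual mechanism is different on both counts. One splits off the reductive Levi factor $K$ of $Q$ (taking $K$-invariants is exact), reducing to the cohomology of the unipotent radical $V^+ \cdot R^+$ of $Q$, which is abelian of dimension $n_0 + v = n - u$; its cohomology with trivial coefficients is the exterior algebra $\Lambda^\bullet(\Lie(V^+)^* \oplus \Lie(R^+)^*)$, which already vanishes in degrees $> n-u$. This is where the exponent $n-u$ comes from: it is the dimension of the unipotent radical of $Q$, not $n$ minus the size of some normal subgroup. The remaining borderline degree $i = n-u$ is killed by a weight computation under the torus $\SSS$ (not under $U$): the top cohomology tensored with $\omega$ has $\SSS$-weight $(-u,-u)$, which is nonzero precisely when $u \neq 0$, and hence has no $K$-invariants. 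Your instinct that the weight filtration placing $\nU_{\nSDi}$ in strictly negative weight must be invoked is correct, but it enters through this $\SSS$-weight of $\omega$ seen by the reductive part, not through a nontrivial unipotent character.
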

Note that all boundary strata $Y$ which come from rational polyhedral cones in the unipotent cone of $\nSDi$ satisfy $\dim(Y) \ge n-u$. 
\begin{proof}
W.l.o.g.\@ we may assume that the base field of the category of $Q$-representations is $\C$ and that all algebraic groups involved are defined over $\C$. 
We have the following zoo of connected linear algebraic groups (cf.\@ \cite[2.2]{Hor14} or \cite{Pink}):
\begin{equation*}
\begin{array}{rll}
\SSS & = & \G_m^2, \text{ the Deligne torus} \\
P = \nP_\nSDi& = & G \cdot V \cdot U, \text{ where} \\
G = \nG_\nSDi && \text{is a maximal reductive subgroup}\\
V = V_\nSDi & \cong & \G_a^{2v} \\
U = U_\nSDi & \cong &\G_a^{u} \\
h: \SSS \rightarrow G & & \text{any homomorphism in $\nh_\nSDi(\nX_\nSDi)$, which w.l.o.g.\@ can be assumed to factor via $G$} \Bstrut \\
\hline
R & = & K  \cdot  R^+ = G \cap Q \Tstrut \\
 & & \text{is the parabolic in $G$ (with its Levi decomposition) associated with $h$} \\
R^+, R^- & \cong &\G_a^{n_0} \Bstrut \\ 
\hline 
V & =& V^+ \cdot V^- \Tstrut \\
V^+ &=& Q \cap V  \\
Q &= &R \cdot V^+ \text{ is the quasi-parabolic in $P$ associated with $h$ and defining $M^\vee(\nSDi)$ } 
\end{array}
\end{equation*}

By definition of a mixed Shimura datum the Lie algebras of these groups have the following weights under $\SSS$ (acting via $\Ad \circ h$):

\begin{equation*}
\begin{array}{r|l}
\Lie(U) & (-1,-1)  \\
\end{array}\qquad
\begin{array}{r|l}
\Lie(V^+) & (-1,0)  \\
\Lie(V^-) & (0,-1) \\
\end{array}\qquad
\begin{array}{r|l}
\Lie(R^+) & (-1,1) \\
\Lie(K) & (0,0)  \\
\Lie(R^-) & (1,-1) 
\end{array}
\end{equation*}

We have the following sequence of affine morphisms
\[ M^\vee(\nSDi) =  P / (R \cdot V^+) \rightarrow G \cdot V / (R \cdot V^+) \rightarrow G/R  \]
of relative dimensions $u = \dim(U)$, and $v = \dim(V^-)$, respectively. $G/R$ is a projective flag variety of dimension $n_0 = \dim(R^-)$. 
Note that $\omega$ is isomorphic to the representation (with $Q$ acting via $\Ad$ on the Lie algebras)
\begin{equation}\label{omega} \left( \Lambda^{n_0}  \Lie(R^-) \otimes \Lambda^{v} \Lie(V^-) \otimes \Lambda^{u} \Lie(U) \right)^* .  \end{equation}

STEP 1: We have
\[ H^i(\left[  \cdot / Q \right], \omega) = H^i(\left[  \cdot / (V^+ \cdot R^+) \right], \omega)^K   \]
because $K$ is reductive. Furthermore since $\omega$ is 1-dimensional and hence trivial as a $V^+$ and $R^+$ representation, we have as $K$-representations
\[ H^i(\left[  \cdot / (V^+ \cdot R^+) \right], \omega) = H^i(\left[  \cdot / (V^+ \cdot R^+) \right], \C) \otimes \omega. \]

STEP 2: The subgroups $V^+$ and $R^+$ commute (because there is no part of the Lie algebra of weight $(-2,1)$). Hence $H^i(\left[  \cdot / (V^+ \cdot R^+) \right], \C)$ is just the cohomology of
$\G_a^{n_0+v}$ w.r.t.\@ the trivial representation. 
Hence $H^i(\left[  \cdot / V^+ \cdot R^+ \right], \C) = \Lambda^i (\Lie(V^+)^* \oplus \Lie(R^+)^*)$ as natural $\mathrm{Aut}(V^+ \cdot R^+)$-modules \cite[p.64, Remark 2]{Jantzen}.
Therefore $H^i(\left[  \cdot / (V^+ \cdot R^+) \right], \C) = 0$ for $i > n_0 + v$ and  
\[ H^{n_0+v}(\left[  \cdot / (V^+ \cdot R^+) \right], \C) = \Lambda^{n_0+v} (\Lie(V^+)^* \oplus \Lie(R^+)^*) \cong \C.  \]

STEP 3: Since the last isomorphism is compatible w.r.t.\@ the natural $\mathrm{Aut}(V^+ \cdot R^+)$-actions, we see that $H^{n_0+v}(\left[  \cdot / (V^+ \cdot R^+) \right], \C)$ is one-dimensional of weight 
\[ (v+n_0, -n_0) \]
under $\SSS$. The representation $\omega$ is isomorphic to (\ref{omega})  and hence one-dimensional of weight
\[ ( u-n_0,u+v+n_0). \]
Therefore 
\[ H^{n_0+v}(\left[  \cdot / (V^+ \cdot R^+) \right], \C) \otimes \omega \quad \text{ has weight } \quad (u+v,u+v)  \]
and thus cannot have any $K$-invariants as long as $u+v\not=0$. 
\end{proof}

\section{Hirzebruch-Mumford proportionality}\label{SECTHM}

\subsection{Chern classes}

\begin{PAR}
Let $X$ be a smooth projective complex  variety of dimension $n$. 
There are several ways of constructing the Chern classes of vector bundles on $X$. We will use the following, cf.\@ \cite{Atiyah}. Let $\mathcal{E}$ be a vector bundle on $X$. 
It defines an Atiyah extension (where $J^1$ is the first jet bundle (cf.\@ Section~\ref{JET}))
\[ \xymatrix{ 0 \ar[r] & \Omega^{1}_X \otimes \mathcal{E} \ar[r] & J^1 \mathcal{E} \ar[r] & \mathcal{E} \ar[r] & 0.  } \]
Tensoring with $\mathcal{E}^*$ and pulling back along the unit $\OO_X \rightarrow \mathcal{E}^* \otimes \mathcal{E}$ we get an extension
\[ \xymatrix{ 0 \ar[r] & \Omega^{1}_X \otimes \mathrm{End}(\mathcal{E})  \ar[r] & \mathcal{A} \ar[r] & \OO_X \ar[r] & 0 . } \]

This induces a morphism
\[  \OO_X \rightarrow \Omega^{1}_X \otimes \mathrm{End}(\mathcal{E})[1]   \]
in $D^b(\cat{$\OO_X$-coh})$. The coefficients of the characteristic polynomial of this ``endomorphism'' give morphisms
\[ c_i(\mathcal{E}): \OO_X \rightarrow \Omega^{i}_X[i]. \]
Furthermore, any polynomial $p$ in the graded polynomial ring $\Q[c_1,c_2, \dots, c_n]$ (where $\deg(c_i)=i$) of degree $n$ gives a morphism
\[ p(c_1(\mathcal{E}), \dots, c_n(\mathcal{E})): \OO_X \rightarrow  \Omega^{n}_X[n] =: \omega_X[n]. \]
The corresponding extension $p(c_1(\mathcal{E}), \dots, c_n(\mathcal{E})) \in \Ext^n(\OO_X, \omega_X)$ can be constructed explicitly using only locally free sheaves. 
Using the trace map $\tr: \Ext^n(\OO_X, \omega_X) \rightarrow k$ of Serre duality, we get elements $\tr(p(c_1(\mathcal{E}), \dots, c_n(\mathcal{E}))) \in k$. The compatibility with other
constructions of Chern classes using algebraic cycles shows that even $\tr(p(c_1(\mathcal{E}), \dots, c_n(\mathcal{E}))) \in \Q$. 
\end{PAR}

\subsection{Proportionality}\label{HMPROP}

\begin{SATZ}[Hirzebruch-Mumford proportionality]\label{THEOREMHMP}
Let $\overline{M}$ be an abstract toroidal compactification of dimension $n$ equipped with automorphic data with logarithmic connection satisfying the axioms (F, T, M, B) (cf.\@ Section~\ref{SECTAUTDATA}) and such that $P=P_M$ is reductive. 
There is a constant $c \in \Q$ such that for all homogeneous
polynomials $p$ of degree $n$ in the graded polynomial ring $\Q[c_1,c_2, \dots, c_n]$ and all $P$-equivariant vector bundles  $\mathcal{E}$ in $\cat{$\left[ M^\vee / P \right]$-coh}$
the proportionality
\[ p(c_1(\Xi^*\mathcal{E}), \dots, c_n(\Xi^*\mathcal{E})) = c \cdot p(c_1(\mathcal{E}), \dots, c_n(\mathcal{E}))   \]
holds true. 
\end{SATZ}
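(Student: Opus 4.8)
The plan is to exhibit both sides of the proportionality as the images of a \emph{single} class in a one-dimensional $\Ext$-group, under two exact tensor functors out of one Abelian tensor category, exactly as outlined in the introduction. Concretely I would take $\mathcal{A} = \cat{$\overline{M}$-FJ-coh}$, using the tensor product of Definition~\ref{TENSOR2} on its torsion-free (indeed locally free) objects, where the Atiyah construction takes place. There are two exact tensor functors out of $\mathcal{A}$: the functor $\Xi^*$ of \ref{DEFXI} into $\cat{$\overline{M}$-Coh}$, which preserves the tensor product on torsion-free objects by the lemma following Definition~\ref{TENSOR2}; and the restriction $(0)_M^*$ to the open stratum from \ref{EXTFUNCTORS}, into $\cat{$\left[M^\vee/P_M\right]$-Coh}$ and hence, forgetting the $P_M$-action, into $\cat{$M^\vee$-Coh}$, which is trivially a tensor functor since $n_M = 0$. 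The canonical extension $\widehat{\mathcal{E}}$ of $\mathcal{E}$ (Example~\ref{EXCANEXT}) is a locally free object of $\mathcal{A}$ with $\Xi^*\widehat{\mathcal{E}} = \Xi^*\mathcal{E}$ and $(0)_M^*\widehat{\mathcal{E}} = \mathcal{E}$.

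Inside $\mathcal{A}$ I would form the Atiyah extension
\[ 0 \to \Omega^1 \otimes \widehat{\mathcal{E}} \to (J^1\mathcal{E})' \to \widehat{\mathcal{E}} \to 0, \]
where $(J^1\mathcal{E})'$ is the object of Theorem~\ref{THEOREMAUTOVECTD} and $\Omega^1$ is the object of \ref{ATIYAH2} with $\Xi^*\Omega^1 = \Omega_{\overline{M}}$ and $(0)_M^*\Omega^1 = \Omega_{M^\vee}$; by Theorem~\ref{THEOREMAUTOVECTD} applying $\Xi^*$ returns the Atiyah extension of $\Xi^*\mathcal{E}$ on $\overline{M}$, and applying $(0)_M^*$ returns that of $\mathcal{E}$ on $M^\vee$. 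Running Atiyah's purely homological construction of Chern classes in $\mathcal{A}$ — tensor with $\widehat{\mathcal{E}}^*$, pull back along $\OO \to \End(\widehat{\mathcal{E}})$, and read off the coefficients of the characteristic polynomial — produces for each degree-$n$ polynomial $p$ a class
\[ \Theta := p(c_1(\widehat{\mathcal{E}}), \dots, c_n(\widehat{\mathcal{E}})) \in \Ext^n_{\mathcal{A}}(\OO, \omega), \qquad \omega := \Lambda^n\Omega^1, \]
the object $\omega$ of Lemma~\ref{PROPRES}, with $\Xi^*\omega = \omega_{\overline{M}}$ and $(0)_M^*\omega = \omega_{M^\vee}$. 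As the construction is natural for exact tensor functors, $\Xi^*\Theta = p(c(\Xi^*\mathcal{E}))$ in $H^n(\overline{M}, \omega_{\overline{M}})$ and $(0)_M^*\Theta = p(c(\mathcal{E}))$ in $H^n(M^\vee, \omega_{M^\vee})$; composing with the Serre-duality traces gives $\Q$-linear functionals $\psi_{\overline{M}}, \psi_{M^\vee} \colon \Ext^n_{\mathcal{A}}(\OO, \omega) \to \Q$ computing the two Chern numbers.

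The heart of the argument, and the step I expect to be the main obstacle, is the claim that $\Ext^n_{\mathcal{A}}(\OO, \omega) \cong k$ is one-dimensional. For this I would feed the resolution of Lemma~\ref{PROPRES},
\[ 0 \to \omega \to \omega_{M^\vee} \to \bigoplus_{\mathrm{codim}\,1} \omega_{\overline{Y}} \to \bigoplus_{\mathrm{codim}\,2} \omega_{\overline{Y}} \to \cdots, \]
into the hypercohomology spectral sequence $E_1^{p,q} = \Ext^q_{\mathcal{A}}(\OO, C^p) \Rightarrow \Ext^{p+q}_{\mathcal{A}}(\OO, \omega)$, with $C^0 = \omega_{M^\vee}$ and $C^p = \bigoplus_{\mathrm{codim}\,p}\omega_{\overline{Y}}$ for $p \ge 1$. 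The adjunctions of \ref{EXTFUNCTORS} together with the full faithfulness of Corollary~\ref{KOREMBEDDINGD} identify $\Ext^q_{\mathcal{A}}(\OO, \omega_{M^\vee}) = H^q(\left[M^\vee/P_M\right], \omega_{M^\vee})$ and $\Ext^q_{\mathcal{A}}(\OO, \omega_{\overline{Y}}) = H^q(\left[M^\vee_Y/P_Y\right], \omega_{M^\vee_Y})$. For $p = 0$, reductivity of $P_M$ and Serre duality on the projective flag variety $M^\vee$ give $H^q(\left[M^\vee/P_M\right], \omega_{M^\vee}) = k$ for $q = n$ and $0$ otherwise. For a codimension-$p$ stratum $Y$ one has $\dim Y = n - p$, so the boundary vanishing axiom (B) of \ref{AXIOMSB} forces $E_1^{p,\,n-p} = 0$ for every $p \ge 1$, since $n - p \ge \dim Y$. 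Hence the only surviving contribution in total degree $n$ is $E_1^{0,n} = k$; the differentials into and out of it vanish for the same degree reasons, so $\Ext^n_{\mathcal{A}}(\OO, \omega) = k$. Verifying the $\Ext$-identifications and this degeneration carefully is the delicate part, and is the unique place where all four axioms and the reductivity hypothesis enter.

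Since $\Ext^n_{\mathcal{A}}(\OO, \omega)$ is one-dimensional, $\psi_{\overline{M}}$ and $\psi_{M^\vee}$ are scalar multiples of one fixed isomorphism onto $\Q$, so their values on the single class $\Theta$ are proportional by a constant independent of $\mathcal{E}$ and $p$. The functional $\psi_{M^\vee}$ is nonzero: the forgetful map $H^n(\left[M^\vee/P_M\right], \omega_{M^\vee}) \to H^n(M^\vee, \omega_{M^\vee})$ is an isomorphism for $P_M$ connected reductive, and is followed by the Serre trace. To obtain a finite nonzero constant one also needs $\psi_{\overline{M}} \ne 0$, equivalently that $\Xi^*$ induces an isomorphism $\Ext^n_{\mathcal{A}}(\OO, \omega) \xrightarrow{\sim} H^n(\overline{M}, \omega_{\overline{M}}) \cong k$; this holds because $\overline{M}$ is projective of dimension $n$ and some characteristic number of an automorphic bundle is nonzero — for instance $\tr(c_1(\Xi^*\omega_{M^\vee})^n)$, the self-intersection of the big and nef automorphic bundle $\Xi^*\omega_{M^\vee} = \omega_{\overline{M}}(\log)$. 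Setting $c := \psi_{M^\vee}(\Theta_0)/\psi_{\overline{M}}(\Theta_0)$ for any $\Theta_0$ with $\psi_{\overline{M}}(\Theta_0) \ne 0$ then yields, after taking traces, the asserted identity $p(c_1(\mathcal{E}), \dots, c_n(\mathcal{E})) = c \cdot p(c_1(\Xi^*\mathcal{E}), \dots, c_n(\Xi^*\mathcal{E}))$ for all $\mathcal{E}$ and $p$, with $c \in \Q$ since Chern numbers are rational.
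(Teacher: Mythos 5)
Your proposal follows essentially the same route as the paper: the Atiyah construction of a class in $\Ext^n_{\cat{$\overline{M}$-FJ-coh}}(\OO,\omega)$, the two exact tensor functors ($\Xi^*$ and restriction to the open stratum followed by forgetting the $P_M$-action), and the reduction of the whole statement to the (at most) one-dimensionality of that $\Ext$-group, which you derive from the resolution of Lemma~\ref{PROPRES} together with axiom (B), reductivity of $P_M$, Serre duality, and the adjunction/full-faithfulness identifications of Corollary~\ref{KOREMBEDDINGD}, exactly as in Proposition~\ref{PROPDIMONE} (the paper splits the resolution into short exact sequences where you run the hypercohomology spectral sequence, a purely cosmetic difference). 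Your extra verification that the functional on the $\overline{M}$-side is nonzero is a point the paper leaves implicit, but it does not alter the structure of the argument.
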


\begin{proof}
Starting from the sequence in $\cat{$\overline{M}$-FJ-coh}$ (cf.\@ \ref{ATIYAH3} for the definition of $J^1( \mathcal{E})' $):
\[ \xymatrix{ 0 \ar[r] & (\Omega^{1})' \otimes \mathcal{E} \ar[r] & J^1( \mathcal{E})' \ar[r] & \mathcal{E} \ar[r] & 0  } \]
by the procedure described in the last section we can construct an element
\[ \widetilde{p}(\mathcal{E}) \in \Ext^n_{\cat{$\overline{M}$-FJ-coh}}(\OO, \omega).   \]
Note that in the construction only the tensor product of locally free objects is involved and the exactness of $\otimes$ on sequences involving those. 

Consider the following two compositions of functors
\[ \xymatrix{
D^b(\cat{$\overline{M}$-FJ-coh})  \ar[rrr]^{\mathcal{D}^b(\Xi^*)}  &&& D^b(\cat{$\OO_{\overline{M}}$-coh}) \\
D^b(\cat{$\overline{M}$-FJ-coh}) \ar[r]^{(0)_M^*} & D^b(\cat{$M$-FJ-coh}) \ar@{=}[r] & D^b(\cat{$\left[M^\vee / P_M\right]$-coh}) \ar[r]^-{\text{forget}} & D^b(\cat{$\OO_{M^\vee}$-coh})
} \]

Those induce linear maps (composing further with $\tr$)
\[ \Ext_{\cat{$\overline{M}$-FJ-coh}}^n(\OO, \omega) \rightarrow k \]
which map $\widetilde{p}(\mathcal{E})$ to
\[ p(c_1(\Xi^*\mathcal{E}), \dots, c_n(\Xi^*\mathcal{E}))  \quad \text{ and } \quad  p(c_1(\mathcal{E}), \dots, c_n(\mathcal{E}))  \]
respectively. Here it is used that $\Xi^*$ is an exact functor which is compatible with the tensor product when restricted to locally free (or even torsion-free) objects, that by Theorem~\ref{THEOREMAUTOVECTD} the image
of $J^1(\mathcal{E})'$ under $\Xi^*$ is precisely $J^1(\Xi^*\mathcal{E})$, and that the image under the second functor is $J^1(\mathcal{E})$ where the $P_M$-action on $\mathcal{E}$ is forgotten (by definition of $J^1(\mathcal{E})'$). 

Since there are non-zero Chern polynomials on $M^\vee$, to establish the Theorem, it therefore suffices to show that $\Ext_{\cat{$\overline{M}$-FJ-coh}}^n(\OO, \omega)$ is one dimensional.
This is Proposition~\ref{PROPDIMONE} below. In the compact case, i.e.\@ if $M = \overline{M}$, this is easier and Lemma~\ref{LEMMAOMEGALOG} can be applied directly. 
\end{proof}

\begin{PROP}\label{PROPDIMONE}In the setup of Theorem~\ref{THEOREMHMP}, if $P_M$ is reductive, we have
\[ \dim(\Ext_{\cat{$\overline{M}$-FJ-coh}}^n(\OO, \omega)) = 1. \]
\end{PROP}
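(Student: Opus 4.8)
The plan is to feed the finite resolution of Lemma~\ref{PROPRES} into a hyper-$\Ext$ spectral sequence and to show that, in total degree $n$, every column except the one coming from the open stratum is annihilated by the boundary vanishing axiom (B). Writing that resolution as a quasi-isomorphism $\omega \simeq C^\bullet$ with $C^0 = \omega_{M^\vee}$ and $C^p = \bigoplus_{Y\ \text{codim}\ p} \omega_{\overline{Y}}$ for $p \ge 1$ (a bounded complex, since $\dim \overline{M} = n$), the brutal-truncation spectral sequence of $C^\bullet$ reads
\[ E_1^{p,q} = \Ext^q_{\cat{$\overline{M}$-FJ-coh}}(\OO, C^p) \Rightarrow \Ext^{p+q}_{\cat{$\overline{M}$-FJ-coh}}(\OO, \omega). \]
No differential can enter the column $p=0$, so $E_\infty^{0,n}$ is a quotient of $E_1^{0,n}$, while I will show $E_1^{p,n-p}=0$ for $p \ge 1$; then $\dim \Ext^n(\OO,\omega) \le \dim E_1^{0,n}$, and it remains to control the two kinds of $E_1$-terms.

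For the open stratum ($p=0$) I would use that both $\OO$ and $\omega_{M^\vee}$ are automorphic vector bundles, i.e.\@ canonical extensions of the $P_M$-equivariant bundles $\OO_{M^\vee}$ and $\omega_{M^\vee}$. By the remark following Corollary~\ref{KOREMBEDDINGD} the canonical extension functor is fully faithful on bounded derived categories, so
\[ \Ext^q_{\cat{$\overline{M}$-FJ-coh}}(\OO, \omega_{M^\vee}) \cong \Ext^q_{\cat{$\left[M^\vee/P_M\right]$-Coh}}(\OO_{M^\vee}, \omega_{M^\vee}) = H^q(\left[M^\vee/P_M\right], \omega_{M^\vee}). \]
Since $P_M$ is reductive the invariants functor $f_*$ is exact, so this equals $H^q(M^\vee, \omega_{M^\vee})^{P_M}$; for $q=n$, Serre duality on the projective flag variety $M^\vee$ gives $H^n(M^\vee, \omega_{M^\vee}) \cong k$ with trivial $P_M$-action, whence $\dim E_1^{0,n} \le 1$.

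For a boundary stratum $Y$ of codimension $p \ge 1$ I would compute $\Ext^q(\OO, \omega_{\overline{Y}})$ by unwinding $\omega_{\overline{Y}} = \iota_{0,!}\,(0)_{Y,*}\,\omega_{M^\vee_Y}$ through the two exact adjunctions $(0)^*_Y \dashv (0)_{Y,*}$ and $\iota_{0,!} \dashv \iota_0^*$ of \ref{EXTFUNCTORS} and Corollary~\ref{KOREMBEDDINGD}. Since $\OO$ is bounded below by $0$ we have $\OO = \iota_{0,!}\iota_0^*\OO$, and full faithfulness of $\iota_{0,!}$ together with the adjunction for $(0)_{Y,*}$ yields
\[ \Ext^q_{\cat{$\overline{M}$-FJ-coh}}(\OO, \omega_{\overline{Y}}) \cong \Ext^q_{\cat{$\left[M^\vee_Y/P_Y\right]$-Coh}}((0)^*_Y\iota_0^*\OO, \omega_{M^\vee_Y}) = H^q(\left[M^\vee_Y/P_Y\right], \omega_{M^\vee_Y}), \]
using $(0)^*_Y\iota_0^*\OO = \OO_{M^\vee_Y}$. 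The contribution to total degree $n$ sits in bidegree $(p, n-p)$, i.e.\@ $q = n-p = \dim Y$, and axiom (B) asserts precisely that $H^i(\left[M^\vee_Y/P_Y\right], \omega_{M^\vee_Y}) = 0$ for $i \ge \dim Y$; hence $E_1^{p,n-p}=0$ for every $p \ge 1$.

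Assembling the three computations, the abutment in degree $n$ has a filtration whose only possibly nonzero graded piece is the quotient $E_\infty^{0,n}$ of $E_1^{0,n}$, so $\dim \Ext^n_{\cat{$\overline{M}$-FJ-coh}}(\OO,\omega) \le 1$. The step I expect to be the \emph{main obstacle} is the third one: making the adjunction identification $\Ext^q(\OO, \omega_{\overline{Y}}) \cong H^q(\left[M^\vee_Y/P_Y\right], \omega_{M^\vee_Y})$ genuinely correct on the derived level. One must verify that $(0)_{Y,*}$ and $\iota_{0,!}$ really are exact with the claimed exact adjoints, so that the adjunctions descend to $D^b$ with no derived corrections; that $\OO$ lies in the essential image of $\iota_{0,!}$; and that the canonical extension of $\omega_{M^\vee_Y}$ along the smaller strata contributes no extra $\Ext$-groups — all of which rests on the fully faithful embeddings of Corollary~\ref{KOREMBEDDINGD}.
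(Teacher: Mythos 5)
Your proposal is correct and is essentially the paper's own argument: the hyper-$\Ext$ spectral sequence of the brutal filtration of $C^\bullet$ is just a repackaging of the paper's dévissage, which splits the resolution of Lemma~\ref{PROPRES} into short exact sequences and chases the resulting long exact sequences. The two substantive inputs are identical — the open-stratum term is bounded by Lemma~\ref{LEMMAOMEGALOG} (full faithfulness of the canonical extension, reductivity of $P_M$, Serre duality), and the boundary terms are killed by the adjunction identification $\Ext^q(\OO,\omega_{\overline{Y}}) \cong H^q(\left[M^\vee_Y/P_Y\right],\omega_{M^\vee_Y})$ via Corollary~\ref{KOREMBEDDINGD} together with axiom (B).
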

\begin{proof}
By Proposition~\ref{PROPRES} we have an exact sequence
\[ \xymatrix{ 0 \ar[r] & \omega   \ar[r] & \omega_{M^\vee} \ar[r] &  \mathcal{D} \ar[r] & 0 } \]
and a finite resolution of the form
\begin{equation}\label{eqres}
 \xymatrix{ 0 \ar[r] & \mathcal{D} \ar[r] & \bigoplus_{Y \text{ codim 1 strata}} \omega_{\overline{Y}} \ar[r] & \bigoplus_{Y \text{ codim 2 strata}} \ar[r] \omega_{\overline{Y}} & \cdots }
 \end{equation}
We get the long exact sequence
\[ \xymatrix{ \Ext^{n-1}(\OO, \mathcal{D} ) \ar[r] & \Ext^{n}(\OO, \omega)   \ar[r] & \Ext^{n}(\OO, \omega_{M^\vee})  \ar[r] &  \Ext^{n}(\OO, \mathcal{D}) } \]
 (all Ext-groups are computed in the category $\cat{$\overline{M}$-FJ-coh}$).
By Lemma~\ref{LEMMAOMEGALOG} below the dimension of $\Ext^{n}(\OO, \omega_{M^\vee})$ is one. Hence it suffices to show that
$\Ext^{n-1}(\OO, \mathcal{D} ) = \Ext^{n}(\OO, \mathcal{D} ) = 0$. Splitting up the exact sequence (\ref{eqres}) into short exact sequences one sees that it suffices to show that 
$\Ext^{i}(\OO, \omega_{\overline{Y}}) = 0$ for $i \ge \dim(Y)$ and for $Y \not= M$. 
We have fully-faithful embeddings (cf.\@ Corollary~\ref{KOREMBEDDINGD})
\[ \xymatrix{ D^b(\cat{$\left[ \cdot/P_Y\right]$-coh}) \ar@{^{(}->}[rr]^-{(0)_{Y,*}} &&  D^b(\cat{$\overline{M}$-FJ-$\ge 0$-coh})) \ar@{^{(}->}[rr]^-{\iota_{0,!}} & & D^b(\cat{$\overline{M}$-FJ-coh}) } \]
such that the image of $\omega_{M^\vee_Y} = (\Lambda^n (\Lie(P_Y)/\Lie(Q_Y)))^*$ under the composition is $\omega_{\overline{Y}}$. 

Furthermore we have
\[ \OO = \iota_{0,!}\,\iota_{0}^*\, \OO.  \]
 
Hence 
\[
\begin{array}{rcll}
 && \Hom_{\mathcal{D}^b(\cat{$\overline{M}$-FJ-coh})}(\iota_{0,!} \,\iota_{0}^*\,\OO, \iota_{0,!}\, (0)_{Y,*}\, \omega_{M^\vee_Y}[i])  \\
 &=& \Hom_{\mathcal{D}^b(\cat{$\overline{M}$-FJ-$\ge 0$-coh})}(\iota_{0}^*\, \OO, (0)_{Y,*}\, \omega_{M^\vee_Y}[i] ) & \text{(fully-faithfulness)} \\
& =& \Hom_{\mathcal{D}^b(\cat{$\left[M^\vee_Y / P_Y\right]$-coh})}(\OO_{M^\vee_Y}, \omega_{M^\vee_Y}[i] ) & \text{(adjunction)}
\end{array}
\]

Therefore the Proposition follows from boundary vanishing condition (axiom B):
\[ H^i(\left[  M^\vee_Y / P_Y \right], \omega_{M^\vee_Y}) = 0 \text{ for } i \ge \dim(Y). \]
\end{proof}

\begin{LEMMA}\label{LEMMAOMEGALOG}
In the setting of Theorem~\ref{THEOREMHMP} we have
\[ \dim(\Ext_{\cat{$\overline{M}$-FJ-coh}}^n(\OO, \omega_{M^\vee})) = 1. \]
\end{LEMMA}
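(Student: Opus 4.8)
The plan is to transport the Ext-computation from the Fourier-Jacobi category to the quotient stack $\left[ M^\vee / P_M \right]$ by fully-faithfulness, and then to exploit reductivity of $P_M$ together with Serre duality on $M^\vee$.

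First I would observe that both objects are automorphic vector bundles, i.e.\@ canonical extensions in the sense of Example~\ref{EXCANEXT}: one has $\OO = \Xi^* \OO_{M^\vee}$ and $\omega_{M^\vee} = \Xi^* \omega_{M^\vee}$ (the latter being the automorphic line bundle $\omega_{\overline{M}}(\log)$ of the paragraph preceding Lemma~\ref{PROPRES}). By the remark following Corollary~\ref{KOREMBEDDINGD}, the canonical extension functor $\iota_{0,!} \circ (0)_{M,*}: D^b(\cat{$\left[ M^\vee / P_M \right]$-Coh}) \hookrightarrow D^b(\cat{$\overline{M}$-FJ-coh})$ is fully-faithful. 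Applying it to $\OO_{M^\vee}$ and $\omega_{M^\vee}[n]$ identifies
\[ \Ext^n_{\cat{$\overline{M}$-FJ-coh}}(\OO, \omega_{M^\vee}) \cong \Ext^n_{\cat{$\left[ M^\vee / P_M \right]$-Coh}}(\OO_{M^\vee}, \omega_{M^\vee}) = H^n(\left[ M^\vee / P_M \right], \omega_{M^\vee}). \]

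Next I would compute the right-hand side with the Grothendieck spectral sequence (Section~\ref{SECTSTACKS}) attached to the composition $\left[ M^\vee / P_M \right] \xrightarrow{b} \left[ \cdot / P_M \right] \xrightarrow{f} \spec(k)$. Since $P_M$ is reductive, the invariants functor $f_*$ is exact, so $Rf_* = f_*$ and the spectral sequence degenerates, giving
\[ H^n(\left[ M^\vee / P_M \right], \omega_{M^\vee}) \cong \left( H^n(M^\vee, \omega_{M^\vee}) \right)^{P_M}, \]
where $H^n(M^\vee, \omega_{M^\vee})$ carries its induced $P_M$-action (cf.\@ the description of $b_*$ in Section~\ref{SECTSTACKS}). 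Finally, since $P_M$ is reductive the generalized flag variety $M^\vee$ is smooth, projective, and connected of dimension $n = \dim(M)$, so Serre duality yields $H^n(M^\vee, \omega_{M^\vee}) \cong H^0(M^\vee, \OO_{M^\vee})^\vee \cong k$. The $P_M$-action on this one-dimensional space is through a single character, hence its space of invariants has dimension at most one, which is the assertion.

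The computation is short; the only genuine point requiring care is the first step, namely recognizing both $\OO$ and $\omega_{M^\vee}$ as canonical extensions so that the Ext-group may be pulled back through the fully-faithful embedding of Corollary~\ref{KOREMBEDDINGD}. Once this reduction is in place, the remainder is the standard reductive-group-plus-Serre-duality argument, with reductivity of $P_M$ entering twice: to collapse the spectral sequence and to guarantee that $M^\vee$ is projective.
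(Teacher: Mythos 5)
Your proposal is correct and follows essentially the same route as the paper: reduce via the fully-faithful canonical-extension embedding of Corollary~\ref{KOREMBEDDINGD} to an Ext-computation on $\left[ M^\vee / P_M \right]$, factor $R\Hom(\OO,-)$ through global sections and $P_M$-invariants, use reductivity to see the invariants functor is exact, and conclude by Serre duality on $M^\vee$. The only differences are cosmetic (you spell out the degenerate Grothendieck spectral sequence and the identification of both objects as canonical extensions, which the paper leaves implicit).
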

\begin{proof}
We have a fully-faithful embedding (cf.\@ Corollary~\ref{KOREMBEDDINGD})
\[ D^b(\cat{$\left[ M^\vee/P_M \right]$-coh}) \hookrightarrow D^b(\cat{$\overline{M}$-FJ-coh}). \]
The functor $R \Hom(\OO, -)$ is the same as the composition 
\[ D^b(\cat{$\left[ M^\vee/P_M \right]$-coh}) \rightarrow D^b(\cat{$\left[\cdot / P_M\right]$-coh}) \rightarrow D^b(\cat{$\spec(k)$-coh}) \]
where the first functor is the right derived functor of taking global sections and the second is the functor of $P_M$-invariants. However, the last functor is exact (because $P_M$ is reductive) and therefore we have
\[ \Ext_{\cat{$\overline{M}$-FJ-coh}}^n(\OO, \omega_{M^\vee}) = H^n(M^\vee, \omega_{M^\vee})^{P_M}. \]
Since $H^n(M^\vee, \omega_{M^\vee})$ is one-dimensional by Serre duality and thus $P_M$ acts trivially because its center does act trivially on $M^\vee$, the Lemma follows. Note that axiom (T), cf.\@ \ref{AXIOMST}, implies that $n=\dim(\overline{M})=\dim(M^\vee)$. 
\end{proof}

\newpage

\bibliographystyle{abbrvnat}
\bibliography{paper5}

\end{document}